\newtheorem{lm}{Lemma}[section]
\newtheorem{teo}[lm]{Theorem}
\newtheorem{prop}[lm]{Proposition}
\newtheorem{coro}[lm]{Corollary}
\theoremstyle{definition}
\newtheorem{oss}[lm]{Remark}
\newtheorem{defi}[lm]{Definition}
\newtheorem*{ack}{Acknowledgments}
\numberwithin{equation}{section}
\subjclass[2010]{35K55, 35B40, 35J61}
\keywords{Porous medium equation, asymptotic behavior, Lane-Emden equation}
\title[Long-time behavior]{Long-time behavior\\ for the porous medium equation\\ with small initial energy}
\author[Brasco]{Lorenzo Brasco}
\address[L.\ Brasco]{Dipartimento di Matematica e Informatica
\newline\indent
Universit\`a degli Studi di Ferrara
\newline\indent
Via Machiavelli 30, 44121 Ferrara, Italy}
\email{lorenzo.brasco@unife.it}
\author[Volzone]{Bruno Volzone}
\address[B.\ Volzone]{Dipartimento di Scienze e Tecnologie
\newline\indent
Universit\`a degli Studi di Napoli ``Parthenope''
\newline\indent
Centro Direzionale Isola C4, 80143 Napoli, Italy}
\email{bruno.volzone@uniparthenope.it}
\begin{document}

\begin{abstract}
We study the long-time behavior for the solution of the Porous Medium Equation in an open bounded connected set, with smooth boundary. Homogeneous Dirichlet boundary conditions are considered. We prove that if the initial datum has sufficiently small energy, then the solution converges to a nontrivial constant sign solution of a sublinear Lane-Emden equation, once suitably rescaled. We point out that the initial datum is allowed to be sign-changing.
\par
We also give a sufficient energetic criterion on the initial datum, which permits to decide whether convergence takes place towards the positive solution or to the negative one.
\end{abstract}

\maketitle

\begin{center}
\begin{minipage}{11cm}
\small
\tableofcontents
\end{minipage}
\end{center}

\section{Introduction}

\subsection{Overview}
Let us set $\Phi(s)=|s|^{m-1}\,s$, for an exponent $m>1$.
On a given open bounded set $\Omega\subset\mathbb{R}^N$, we consider the following initial boundary value problem for the {\it Porous Medium Equation} (PME for short)
\begin{equation}\label{intromainprob}
\left\{\begin{array}{rcll}
\partial_t u&=&\Delta \Phi(u), & \mbox{ in } Q:=\Omega\times (0,+\infty),\\
u&=&0, & \mbox{ on } \Sigma:=\partial\Omega\times(0,+\infty),\\
u(\cdot,0)&=&u_0,& \mbox{ in } \Omega.
\end{array}
\right.
\end{equation}
We are concerned in this paper with the long-time behavior of the solution $u$ to \eqref{intromainprob}. We point out that in this paper {\it we do not take any sign assumption on $u_0$}.
\vskip.2cm\noindent
By looking for stationary solutions of the PME, i.e. solutions of the form $u(x,t)=X(x)\,T(t)$, it is not difficult to guess that a solution $u$ to \eqref{intromainprob} should behave like
\begin{equation}
\label{LTB}
u(x,t)\sim t^{-\frac{1}{m-1}},\qquad \mbox{ for } t\nearrow +\infty.
\end{equation}
However, differently from the case $m=1$ (i.e. the heat equation), the equation is now nonlinear and thus this formal argument does not lead to a rigorous proof. In other words, for $m>1$ the solution $u$ can not be written as a superposition of stationary solutions, differently from the case $m=1$.
\par
In the case of a nontrivial initial datum $u_0\ge 0$, the long-time behavior \eqref{LTB} has been first rigourously proved by Aronson and Peletier in \cite[Theorem 3]{AP}, under some smoothness assumptions on $\partial\Omega$ and $u_0$. More precisely, they proved that
\begin{equation}
\label{AP}
\lim_{t\to+\infty} \Big\|t^\frac{1}{m-1}\,u(\cdot,t)-U\Big\|_{L^\infty(\Omega)}=0,
\end{equation}
where $U$ is such that $\Phi(U)\in W^{1,2}_0(\Omega)$ is the unique positive weak solution of the {\it sublinear Lane-Emden equation}
\begin{equation}
\label{LEintro}
-\Delta \psi=\alpha\,|\psi|^{q-2}\,\psi,\quad \mbox{ in }\Omega,\qquad \mbox{ with } \alpha=\frac{1}{m-1}\ \mbox{Â and }\ q=\frac{m+1}{m}.
\end{equation}
Actually, the result in \cite{AP} is more precise, as it comes with a quantitative estimate on the rate of convergence with respect to the relative error. Later on, \cite[Theorem 3]{AP} has been generalized by Vazquez in \cite[Theorem 1.1]{Va}, by means of a simpler proof, based on maximum principles and on the observation that $u$ is monotone increasing in time, up to a suitable time scaling.
\par
After the pioneering result by Aronson and Peletier, a lot of efforts has been devoted to generalize in various directions their result for positive initial data. Without any attempt of completeness, we wish to mention for example: the paper \cite{BP} where more general nonlinearities $\Phi(u)$ are allowed; a handful of recent references \cite{BFV, BSV, GMP}, dealing with nonlocal versions of the PME; the reference \cite{GMV} studying the long-time behavior for the PME on more general geometries (noncompact Riemannian manifolds).
\vskip.2cm\noindent
On the contrary, the case when the initial datum $u_0$ is {\it sign-changing}, i.e.
\[
u_0^+:=\max\{u_0,0\}\not\equiv 0\qquad \mbox{ and }\qquad u_0^-:=\min\{u_0,0\}\not\equiv 0,
\]
is less investigated and more difficult to handle.
We take the occasion to point out that, even if usually the constant sign case is considered to be more appropriate for applications, the sign-changing case has its own physical relevance.
We refer for example to \cite{DV}, which contains a model from Hydrology leading to the study of equations of the type \eqref{intromainprob} with sign-changing solutions.
\par
Despite some theoretical studies, see for example \cite{HKB, Ma} and the references therein, in this case the only convergence result we are aware of is \cite[Theorem 2.6]{Va} by Vazquez, which deals with the one-dimensional case $N=1$. In this case, it is possible to prove that for every $u_0\in L^1(\Omega)$, we still have \eqref{AP} and again $U$ is such that $\Phi(U)$ weakly solves \eqref{LEintro}.
\par
The one-dimensional case is quite special: the result \cite[Theorem 2.6]{Va} heavily relies on the fact that solutions to \eqref{LEintro} on an interval $\Omega=(a,b)$ are completely classified (see \cite[Lemma 2.2]{Va}). In higher dimensions, the situation is much more complicated, even for simple geometries and such a classification result is not available. Moreover, the proof of \cite{Va} exploits the stabilization result of \cite[Theorem 18]{ACP} by Aronson, Crandall and Peletier, which does not seem easy to be generalized for $N\ge 2$, see the comment in \cite[page 1022]{ACP}.
\par
In any case, it is important to point out that \cite[Theorem 2.6]{Va} does not exclude the possibility that $U\equiv 0$ when $u_0$ is sign-changing. This leaves open the question of the optimality of the estimate \eqref{LTB}
in the case of a sign-changing initial datum $u_0$. We refer to \cite[Section 4.2]{Va} for further comments on this point.
\begin{oss}
Even if this is not the subject of this paper, for completeness we recall that the long-time behavior of solution to \eqref{intromainprob} has been widely investigated also in the case $0<m<1$. In this case, the relevant parabolic equation is known as {\it fast diffusion equation}. For $0<m<1$, the long-time behavior is more complicated already in the constant sign case $u_0\ge 0$. Indeed, two major difficulties now arise: at first, the solution becomes identically zero after a certain {\it extinction time} $T^*$.
Secondly, in the stationary equation \eqref{LEintro} the exponent $q=(m+1)/m$ is now {\it larger than $2$}, i.e. the term $|\psi|^{q-2}\,\psi$ is {\it superlinear}. Then it is well-known that equation \eqref{LEintro} may have multiple solutions, already in the constant-sign case: indeed, the multiplicity of positive solutions depends very much on the geometry of the underlying set $\Omega$ (see for example \cite[Corollary]{Da} and \cite[Example 4.7]{BraFra} for some non-convex starshaped sets having multiple positive solutions).
\par
For more details and some partial results about the long-time behavior for $0<m<1$, we refer the reader to the pioneering result of \cite[Theorem 2]{BH}, as well as to the recent paper \cite{BF} and the references therein.
\end{oss}

\subsection{Main results}
In order to present our results, we need to settle some definitions.
For $1<q<2$ and $\alpha>0$, we define the following energy functional
\[
\mathfrak{F}_{q,\alpha}(\varphi)=\frac{1}{2}\,\int_\Omega |\nabla \varphi|^2\,dx-\frac{\alpha}{q}\,\int_\Omega |\varphi|^q\,dx,\qquad \mbox{ for every } \varphi\in W^{1,2}_0(\Omega).
\]
We say that $\lambda\in \mathbb{R}$ is a {\it critical level} for $\mathfrak{F}_{q,\alpha}$ provided there exists a weak solution $u\in W^{1,2}_0(\Omega)$ of
\[
-\Delta u=\alpha\,|u|^{q-2}\,u,\qquad \mbox{ in }\Omega,
\]
with
\[
\mathfrak{F}_{q,\alpha}(u)=\lambda.
\]
We refer the reader to Section \ref{sec:energyfunctional} below for some basic properties of the functional $\mathfrak{F}_{q,\alpha}$.
\par
In particular, the following two critical levels of $\mathfrak{F}_{q,\alpha}$ will play a major role: the {\it ground state level}
\[
\Lambda_1:=\min_{\varphi\in W^{1,2}_0(\Omega)} \mathfrak{F}_{q,\alpha}(\varphi),
\]
and the {\it first excited state level}
\[
\Lambda_2:=\inf\Big\{\lambda>\Lambda_1\, :\, \lambda \mbox{ is a critical value of }\mathfrak{F}_{q,\alpha}\Big\}.
\]
We then have the following convergence result, which is the main outcome of our paper.
\begin{teo}
\label{teo:main}
Let $m>1$ and let $\Omega\subset\mathbb{R}^N$ be an open bounded connected set, with $C^1$ boundary. Let us assume that the initial datum $u_0\in L^{m+1}(\Omega)$ satisfies:
\begin{itemize}
\item[\it (i)] $\Phi(u_0)\in W^{1,2}_0(\Omega)$;
\vskip.2cm
\item[\it (ii)] if we set $q=(m+1)/m$ and $\alpha=1/(m-1)$, then
\begin{equation}
\label{condizione}
\mathfrak{F}_{q,\alpha}(\Phi(u_0))<\Lambda_2.
\end{equation}
\end{itemize}
If $u$ is the unique weak solution to \eqref{intromainprob}, then there exists $U\in C(\overline{\Omega})\setminus\{0\}$ such that
\[
\lim_{t\to+\infty} \|t^\alpha u(\cdot,t)-U\|_{L^\infty(\Omega)}=0.
\]
Moreover, $\Phi(U)\in \{w,-w\}$ where $w$ is the unique positive minimizer of $\mathfrak{F}_{q,\alpha}$ over $W^{1,2}_0(\Omega)$.
\end{teo}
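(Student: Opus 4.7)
\emph{Step 1 (self-similar rescaling and gradient flow structure).} First I would introduce the change of variables
\[v(x,\tau)=(1+t)^{\alpha}\,u(x,t),\qquad \tau=\log(1+t),\qquad \alpha=\tfrac{1}{m-1},\]
which turns \eqref{intromainprob} into the rescaled PME
\[\partial_\tau v=\Delta\Phi(v)+\alpha\,v\quad \mbox{ in }\Omega\times(0,+\infty),\]
with homogeneous Dirichlet boundary conditions. Stationary solutions $V$ of this equation are exactly the functions such that $\Phi(V)\in W^{1,2}_0(\Omega)$ weakly solves \eqref{LEintro}. Exploiting the algebraic identity $|\Phi(v)|^{q-2}\,\Phi(v)=v$, multiplying the rescaled equation by $\partial_\tau \Phi(v)=m\,|v|^{m-1}\,\partial_\tau v$ and integrating by parts yields the dissipation identity
\[\frac{d}{d\tau}\,\mathfrak{F}_{q,\alpha}\bigl(\Phi(v(\cdot,\tau))\bigr)=-m\int_{\Omega}|v|^{m-1}\,(\partial_\tau v)^2\,dx.\]
In particular, $\tau\mapsto\mathfrak{F}_{q,\alpha}(\Phi(v(\tau)))$ is nonincreasing, and thanks to hypothesis \eqref{condizione} it stays confined in the interval $[\Lambda_1,\Lambda_2)$ for every $\tau\ge 0$.

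\emph{Step 2 (precompactness and identification of the $\omega$-limit).} Coercivity of $\mathfrak{F}_{q,\alpha}$ (which holds because $1<q<2$) upgrades the energy bound into a uniform $W^{1,2}_0$-estimate on $\Phi(v(\cdot,\tau))$, hence a uniform $L^{m+1}$-estimate on $v$. Combining this with the standard $L^\infty$ and H\"older regularity theory for the PME applied on cylinders $\Omega\times[\tau,\tau+1]$ gives precompactness of the orbit $\{v(\cdot,\tau):\tau\ge 1\}$ in $C(\overline{\Omega})$, so that the $\omega$-limit set $\omega(u_0)$ is nonempty, compact and connected. Since the full dissipation integral $\int_0^{+\infty}\!\int_\Omega |v|^{m-1}(\partial_\tau v)^2\,dx\,d\tau$ is finite, a standard test-function argument shows that every $V\in\omega(u_0)$ is stationary, with energy equal to $\mathfrak{F}_\infty:=\lim_{\tau\to\infty}\mathfrak{F}_{q,\alpha}(\Phi(v(\tau)))\in[\Lambda_1,\Lambda_2)$. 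By the very definition of $\Lambda_2$, the only critical level inside $[\Lambda_1,\Lambda_2)$ is $\Lambda_1$, so $\mathfrak{F}_\infty=\Lambda_1$ and every element of $\omega(u_0)$ is a minimizer of $\mathfrak{F}_{q,\alpha}$. Uniqueness up to sign of the positive ground state (a property of sublinear Lane--Emden recalled in Section~\ref{sec:energyfunctional}) then gives $\omega(u_0)\subseteq\{\Phi^{-1}(w),\Phi^{-1}(-w)\}$; by connectedness $\omega(u_0)$ reduces to a single element $U\in C(\overline{\Omega})\setminus\{0\}$ with $\Phi(U)\in\{w,-w\}$. Undoing the change of variables produces the announced $L^\infty$-convergence of $t^{\alpha}u(\cdot,t)$ to $U$.

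\emph{Main obstacle.} The delicate point is Step 2: since $u_0$ is allowed to change sign, the time-monotonicity trick of Aronson--Peletier--V\'azquez \cite{AP,Va} is unavailable, and the precompactness of the rescaled orbit in a topology strong enough to guarantee that $\omega(u_0)$ consists of classical stationary solutions (and to yield uniform convergence at the end) has to be extracted directly from the energy dissipation together with sharp PME regularity up to the boundary. By contrast, once this compactness is secured, the strict-gap hypothesis $\mathfrak{F}_{q,\alpha}(\Phi(u_0))<\Lambda_2$ makes the algebraic core of the argument essentially automatic: it prevents the orbit from relaxing to the trivial critical point $0$ (whose level $0$ satisfies $0\ge\Lambda_2$) and compels any limit to be a ground state.
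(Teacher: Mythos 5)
Your proposal follows essentially the same route as the paper: rescale, use $\mathfrak{F}_{q,\alpha}$ as a Lyapunov functional, characterize the $\omega$-limit as a set of critical points, exploit the strict gap $\Lambda_1<\Lambda_2$ (Proposition \ref{prop:dasolo}) and connectedness of $\omega(u_0)$ versus discreteness of $\{w,-w\}$, then upgrade to uniform convergence. The algebraic core is identical.

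The place where your write-up is thinner than the paper is Step 1, not Step 2. The formal computation that $\frac{d}{d\tau}\mathfrak{F}_{q,\alpha}(\Phi(v))=-m\int|v|^{m-1}(\partial_\tau v)^2$ requires multiplying by $\partial_\tau\Phi(v)$ and integrating by parts, which is not obviously legitimate for weak solutions of the signed PME: one would need $\partial_\tau\Phi(v)$ to be an admissible test function and $\nabla\Phi(v(\cdot,\tau))$ to have enough compactness in time. The paper explicitly flags this as the main analytical difficulty (citing Langlais--Phillips) and devotes Proposition \ref{prop:stimecruciali} to a regularization scheme ($\Phi_n$ nondegenerate, smoothing of initial data, passage to the limit) that yields only the \emph{inequality} $\mathfrak{F}_{q,\alpha}(\Phi(v(\cdot,T)))+\frac{4m}{(m+1)^2}\iint|\partial_t g(v)|^2\le\mathfrak{F}_{q,\alpha}(\Phi(u_0))$, which is all that is needed. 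Your ``Main obstacle'' paragraph correctly singles out compactness of the orbit, but treats the dissipation identity as given; in fact rigorously producing even the one-sided entropy--entropy dissipation bound is where most of the effort goes. Similarly, asserting precompactness of the orbit in $C(\overline\Omega)$ is more than is needed and more than the paper proves at that stage: the $\omega$-limit is characterized in $L^{m+1}(\Omega)$ (Theorem \ref{teo:LP}), the critical-point/connectedness argument runs there, and uniform convergence is recovered only at the very end by invoking PME regularity theory. None of this changes the strategy, but a complete proof along your outline would have to fill the same technical gaps the paper fills in Section \ref{sec:5}.
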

Observe that, since the PME has a local character, its dynamics evolve independently on each connected component of a set. Hence, the connectedness of $\Omega$ is a reasonable assumption.
\par
To the best of our knowledge, this is the first convergence result for the PME, with no sign assumptions on the initial datum and for dimensions $N\ge 2$. We will explain in the next subsection the importance of the condition \eqref{condizione}. We first make a comment on the assumptions on the set.
\begin{oss}[Assumptions on $\Omega$]
For a general open bounded set $\Omega\subset\mathbb{R}^N$ it may happen that $\Lambda_2=\Lambda_1$. However, the $C^1$ assumption on $\Omega$ assures that \[
\Lambda_1<\Lambda_2,
\]
see Proposition \ref{prop:dasolo} below, whose proof crucially exploits the recent result of \cite[Theorem A]{BDF}. Thus, our assumption \eqref{condizione} is meaningful. Moreover, it is easily seen that \eqref{condizione} is compatible with taking sign-changing initial data, see Lemma \ref{lm:nontrivial} below.
\par
Actually, the regularity requirement on $\Omega$ could be weakened, we refer to Remark \ref{oss:BDF} below.
\par
We also remark that Theorem \ref{teo:main} still holds for $\Omega$ having a {\it finite} number of connected components $\Omega_1,\dots,\Omega_k$. The only change in the statement is that now the functional $\mathfrak{F}_{q,\alpha}$ has $2^k$ minimizers over $W^{1,2}_0(\Omega)$. See Remarks \ref{oss:disconnected} and \ref{oss:BDF} below.
\end{oss}
The result of Theorem \ref{teo:main} prescribes convergence to a function $U$ such that $\Phi(U)$ is a global minimizer of $\mathfrak{F}_{q,\alpha}$, up to the scaling factor $t^\alpha$. Since for a connected set there are two such minimizers, i.\,e. $w$ and $-w$, it would be interesting to have a sufficient condition to decide whether the stabilization point of $t^\alpha\,u(\cdot,t)$ is $\Phi^{-1}(w)$ or $\Phi^{-1}(-w)$. Then the previous result has to be coupled with the following one, giving a sufficient condition that ensures convergence\footnote{Of course, this gives in a dual way a sufficient condition to get stabilization towards $\Phi^{-1}(-w)$. It is sufficient to exchange the roles of $u_0^+$ and $u_0^-$.} to $\Phi^{-1}(w)$.
\begin{prop}[Selection criterion]
\label{prop:seleziona}
Under the assumptions of Theorem \ref{teo:main}, we further suppose that the initial datum $u_0$ satisfies one of the following properties:
\begin{enumerate}
\item[A.] either
\begin{equation}
\label{vaialpositivo}
\mathfrak{F}_{q,\alpha}\left(\Phi(u_0^-)\right)\ge 0;
\end{equation}
\item[B.] or
\begin{equation}
\label{rivaialpositivo}
\mathfrak{F}_{q,\alpha}\left(\Phi(u_0^-)\right)<0\qquad\mbox{ and }\qquad \mathfrak{F}_{q,\alpha}\left(\Phi(u_0^+)\right)<\Lambda_2.
\end{equation}
\end{enumerate}
Then we have
\[
\lim_{t\to+\infty} \|t^\alpha u(t,\cdot)-\Phi^{-1}(w)\|_{L^\infty(\Omega)}=0.
\]
\end{prop}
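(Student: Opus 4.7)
The plan is to combine Theorem~\ref{teo:main} with the comparison principle for the PME, applied to the auxiliary Cauchy problems starting from the constant sign data $u_0^+$ and $u_0^-$.

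The key structural ingredient is the additive splitting of the energy: since $\Phi(u_0^+)$ and $\Phi(u_0^-)$ belong to $W^{1,2}_0(\Omega)$ and are supported on the disjoint sets $\{u_0>0\}$ and $\{u_0<0\}$, the Dirichlet integral and the $L^q$ norm appearing in $\mathfrak{F}_{q,\alpha}$ split additively. This yields
\[
\mathfrak{F}_{q,\alpha}(\Phi(u_0))\,=\,\mathfrak{F}_{q,\alpha}(\Phi(u_0^+))\,+\,\mathfrak{F}_{q,\alpha}(\Phi(u_0^-)).
\]
Combined with the standing assumption $\mathfrak{F}_{q,\alpha}(\Phi(u_0))<\Lambda_2$, it follows that $\mathfrak{F}_{q,\alpha}(\Phi(u_0^+))<\Lambda_2$ holds in \emph{both} cases: in Case A because $\mathfrak{F}_{q,\alpha}(\Phi(u_0^-))\geq 0$, and in Case B by direct assumption.

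Let $v$ be the unique weak solution of~\eqref{intromainprob} starting from $u_0^+$. Since $u_0^+\geq 0$, the comparison principle gives $v\geq 0$, and Theorem~\ref{teo:main} applied to $v$ (the nonnegativity forcing convergence to the nonnegative ground state) yields $t^\alpha v(\cdot,t)\to \Phi^{-1}(w)$ uniformly on $\overline{\Omega}$. Comparison again, this time from $u_0\leq u_0^+$, gives $u\leq v$ on $Q$, so
\[
\limsup_{t\to\infty}\, t^\alpha u(\cdot,t)\,\leq\,\Phi^{-1}(w)\qquad\text{uniformly on }\overline{\Omega}.
\]
Since Theorem~\ref{teo:main} already guarantees $t^\alpha u\to U$ uniformly with $\Phi(U)\in\{w,-w\}$, it remains to rule out $\Phi(U)=-w$.

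This symmetry-breaking step is the main obstacle. Comparison with the solution $v_-$ starting from $u_0^-$ (to which Theorem~\ref{teo:main} also applies, since $\mathfrak{F}_{q,\alpha}(\Phi(u_0^-))<\Lambda_2$ in both cases) only yields the sandwich $-\Phi^{-1}(w)\leq U\leq\Phi^{-1}(w)$, which is compatible with either ground state. To break the tie, I would pass to the rescaled variables $V(x,\tau):=(1+t)^\alpha u(x,t)$, $\tau:=\log(1+t)$, in which the PME becomes the dissipative gradient flow $\partial_\tau V=\Delta\Phi(V)+\alpha V$ with Lyapunov functional $\mathfrak{F}_{q,\alpha}(\Phi(V(\tau)))$ decreasing monotonically from $\mathfrak{F}_{q,\alpha}(\Phi(u_0))$ toward $\Lambda_1$. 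Assuming for contradiction that $V(\tau)\to-\Phi^{-1}(w)$, one would have $\mathfrak{F}_{q,\alpha}(\Phi(V^+(\tau)))\to 0$ and $\mathfrak{F}_{q,\alpha}(\Phi(V^-(\tau)))\to \Lambda_1<0$. In Case A, the hypothesis $\mathfrak{F}_{q,\alpha}(\Phi(V^-(0)))\geq 0$ should be shown to propagate (at least along a suitable subsequence of times) via the additive splitting of the rescaled energy, contradicting the required decay to a negative-energy ground state. In Case B, the additional subcriticality $\mathfrak{F}_{q,\alpha}(\Phi(u_0^+))<\Lambda_2$ should allow a time-shift argument reducing the analysis to Case A at the first time at which the energy of the negative part crosses zero. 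Identifying the correct dynamical invariant that makes either of these arguments rigorous is the technical heart of the proof.
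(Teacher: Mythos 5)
Your observation that the energy splits additively, $\mathfrak{F}_{q,\alpha}(\Phi(u_0))=\mathfrak{F}_{q,\alpha}(\Phi(u_0^+))+\mathfrak{F}_{q,\alpha}(\Phi(u_0^-))$, because $\Phi(u_0^+)$ and $\Phi(u_0^-)$ have disjoint supports, is correct, and it is indeed what makes both cases A and B yield $\mathfrak{F}_{q,\alpha}(\Phi(u_0^+))<\Lambda_2$. However, everything after that point has a genuine gap, which you yourself half-acknowledge at the end: comparison with the flows started from $u_0^+$ and $u_0^-$ gives only the sandwich $-\Phi^{-1}(w)\le U\le\Phi^{-1}(w)$ and so cannot break the symmetry between the two ground states. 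The proposed fix, tracking the energies of $V^+(\tau)$ and $V^-(\tau)$ separately as a ``dynamical invariant,'' cannot work as stated: there is no reason for $\tau\mapsto\mathfrak{F}_{q,\alpha}(\Phi(V^\pm(\tau)))$ to be monotone or even to enjoy any useful one-sided bound, because the positive and negative parts of the solution interact through the parabolic equation and neither is itself a solution. Nothing in the entropy--entropy dissipation machinery controls the energy of $V^\pm$ individually, only that of the full $V$.

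The paper's argument is topological rather than dynamical, and uses precisely the mountain pass structure built in Section~4. Assuming for contradiction that $\Phi(v(\cdot,t))\to -w$, one constructs a continuous path in $\widetilde\Gamma_\infty$ from $w$ to $-w$ along which $\mathfrak{F}_{q,\alpha}$ stays \emph{strictly below} $\Lambda_2$, thereby violating $\Lambda^*\ge\Lambda_2$ (Proposition~\ref{prop:nocollapse} together with Theorem~\ref{teo:MP}). The path is glued from two pieces: (i) the ``hidden convexity'' path of Lemma~\ref{lm:mistero}, which joins $w$ to $\Phi(u_0)$ with energy bounded by $\max\{\mathfrak{F}_{q,\alpha}(\Phi(u_0^+)),\mathfrak{F}_{q,\alpha}(\Phi(u_0))\}<\Lambda_2$ — and it is exactly here that your additive splitting is used, inside the analysis of $\eta^+(t)=\varphi^+-t\,\varphi^-$; and (ii) the orbit $t\mapsto\Phi(v(\cdot,t))$ itself, which by the entropy--entropy dissipation inequality \eqref{LP1} stays below $\Lambda_2$ for all $t$ and, by the contradiction hypothesis, converges to $-w$. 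The technical obstacle that this orbit is only continuous in the weak $L^{(m+1)/m}(\Omega)$ topology, not in $W^{1,2}_0(\Omega)$, is what motivates the relaxed path classes of Theorem~\ref{teo:MP} and Corollary~\ref{coro:weakest}; without that relaxation the gluing would not produce an admissible competitor for the mountain pass infimum. None of these ingredients appears in your proposal, so the symmetry-breaking step remains open.
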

\begin{oss}
One could wonder whether the class of sign-changing data that comply with the additional conditions A. or B. is empty or not. Actually, we can assure that it is always possible to construct initial data $u_0$ with $u_0^+\not\equiv 0$, $u_0^-\not\equiv0$ and such that condition A. is satisfied (see Lemma \ref{lm:nontrivial} below).
\par
Our ``selection criterion'' does not cover the case where
\begin{equation}
\mathfrak{F}_{q,\alpha}\left(\Phi(u_0^-)\right)<0\qquad\mbox{ and }\qquad \mathfrak{F}_{q,\alpha}\left(\Phi(u_0^+)\right)\ge \Lambda_2.\label{condizameta}
\end{equation}
In this situation our proof does not work.
In any case, we suspect that assuming \eqref{condizameta} the dynamics are in general much more complicated: it may be that other properties of $u_0$ influences the long-time stabilization, apart for the energy of $\Phi(u_0^+)$ and $\Phi(u_0^-)$.
\end{oss}

\subsection{Methods of proof}

The idea behind the proof of Theorem \ref{teo:main} is quite simple to explain. At first, as it is now customary, we perform a time scaling
\[
v(x,t)=\text{e}^{\alpha\, t}\,u(x,e^{t}-1),\qquad \mbox{ where } \alpha=\frac{1}{m-1}.
\]
This new function $v$ solves
\begin{equation}\label{rescaledfirst}
\left\{\begin{array}{rcll}
\partial_t v&=&\Delta \Phi(v)+\alpha\,v, & \mbox{ in } Q,\\
v&=&0, & \mbox{ on } \Sigma,\\
v(\cdot,0)&=&u_0,& \mbox{ in } \Omega.
\end{array}
\right.
\end{equation}
Then one can formally observe that $\mathfrak{F}_{q,\alpha}$ is a Lyapunov functional for this dynamical system, with the aforementioned choices of $\alpha$ and $q$. In other words, the function
\[
t\mapsto \mathfrak{F}_{q,\alpha}(\Phi(v(\cdot,t))),
\]
is non-increasing, as time goes by.
Thus, if we start with an initial datum which satisfies \eqref{condizione}, we will constantly stay below the threshold $\Lambda_2$. Then, in a nutshell, we use the following three ingredients:
\begin{itemize}
\item in the limit as $t$ goes to $+\infty$, we have to stabilize towards a function $U$ such that $\Phi(U)$ is a critical point of $\mathfrak{F}_{q,\alpha}$;
\vskip.2cm
\item the {\it $\omega-$limit set}, i.e. the collection of all possible limit points of $v(\cdot,t)$, is a connected set;
\vskip.2cm
\item below the level $\Lambda_2$, the critical points of $\mathfrak{F}_{q,\alpha}$ form a disconnected set.
\end{itemize}
These three points show that we must have convergence of $v$ for $t$ diverging to $+\infty$ to a certain profile. This in turn implies the desired convergence of the original solution $u$.
\par
As simple as it seems, we have to pay attention to a detail: as observed by Langlais and Phillips in \cite[Remark 1.2]{LP}, proving that $\mathfrak{F}_{q,\alpha}$ decreases in time seems to require some strong compactness of $\nabla \Phi(v(\cdot,t))$ in the $W^{1,2}_0(\Omega)$ topology. In dimension $N=1$, this approach has been successfully employed in the aforementioned result \cite[Theorem 18]{ACP} by Aronson, Crandall and Peletier. However, their proof exploits a sophisticated second order estimate by Benilan and Crandall contained in \cite{BC}. As already said, for $N\ge 2$ it seems quite complicated to extend this approach, since the estimate of \cite{BC} is not enough to conclude.
\par
Thus, we decided here to take a slightly different path, as in \cite{LP}. In particular, we just rely on the fact that
\begin{equation}
\label{barriera}
\mathfrak{F}_{q,\alpha}(\Phi(v(\cdot,t)))\le \mathfrak{F}_{q,\alpha}(\Phi(u_0)),\qquad \mbox{ for every }t>0.
\end{equation}
This is a consequence of the so-called {\it entropy--entropy dissipation inequality}. The (apparently) weaker information \eqref{barriera} is actually all that we need: it guarantees that the stabilization takes place at an energy level which stays below $\Lambda_2$, as needed. Then the scheme of proof highlighted above can be successfully applied.
\par
In any case, incidentally from \eqref{barriera} we immediately have that $\mathfrak{F}_{q,\alpha}$ is indeed monotone decreasing along the solution. Taking $t>s>0$, since $v(x,t)$ solves \eqref{rescaledfirst} in the time interval $[s,+\infty)$ with initial datum $v(s)$, we obtain
\[
\mathfrak{F}_{q,\alpha}(\Phi(v(\cdot,t)))\le\mathfrak{F}_{q,\alpha}(\Phi(v(\cdot,s))),
\]
just by replacing $u_0$ with $v(\cdot,s)$ at the right-hand side of \eqref{barriera}.
\begin{oss}
In the limit case where the initial datum $u_0$ is such that $\mathfrak{F}_{q,\alpha}(\Phi(u_0))=\Lambda_2$, in general the conclusion of Theorem \ref{teo:main} does not hold. For example, by taking $u_0\in L^{m+1}(\Omega)$ such that
\[
-\Delta \Phi(u_0)=\alpha\,|\Phi(u_0)|^{q-2}\,\Phi(u_0)\qquad \mbox{ and }\qquad \mathfrak{F}_{q,\alpha}(\Phi(u_0))=\Lambda_2,
\]
we get that
\[
u(x,t)=(1+t)^{-\alpha}\,u_0(x),
\]
is the unique solution of \eqref{intromainprob}. In this case, we have of course that $t^\alpha\,u(\cdot,t)$ converges to $u_0$, as $t$ goes to $+\infty$. We observe that such a choice of $u_0$ is feasible, since $\Lambda_2$ is a critical level for $\mathfrak{F}_{q,\alpha}$. Also notice that the function $u_0$ is indeed sign-changing, by Proposition \ref{prop:constantsign} below.
\par
Moreover, in the case $\mathfrak{F}_{q,\alpha}(\Phi(u_0))\ge \Lambda_2$, the scheme of proof previously presented may fail to work, because one can not exclude a priori (differently from the one-dimensional case) that a critical level of $\mathfrak{F}_{q,\alpha}$ has a non-trivial topology. This happens for example when $N=2$ and $\Omega$ is a disk: in this case, we have a continuum of critical points of $\mathfrak{F}_{q,\alpha}$ homemorphic to $\mathbb{S}^1$, all having the same energy and consisting of the solutions of the corresponding Lane-Emden equation having a diameter as a nodal line.
\end{oss}
As for the ``selection criterion'' of Proposition \ref{prop:seleziona}, this is based on the existence of a critical level $\Lambda^*$ for $\mathfrak{F}_{q,\alpha}$, having a mountain pass nature. More precisely, this is defined by
\[
\Lambda^*:=\inf_{\gamma\in \Gamma}\max_{\varphi\in \mathrm{Im}(\gamma)} \mathfrak{F}_{q,\alpha}(\varphi),
\]
where
\[
\Gamma=\Big\{\gamma\in C([0,1];W^{1,2}_0(\Omega))\, :\, \gamma(0)=w,\, \gamma(1)=-w\Big\}.
\]
Under the assumptions of Proposition \ref{prop:seleziona}, if $\Phi(v)$ would stabilize towards $-w$, then one could construct a continuous path connecting $w$ and $-w$, whose energy constantly stays strictly below $\Lambda_2\le \Lambda^*$. This would violate the definition of $\Lambda^*$. The construction of this path exploits again \eqref{barriera} and a ``hidden convexity'' property of $\mathfrak{F}_{q,\alpha}$ (which {\it is not} convex in the usual sense), see Lemma \ref{lm:mistero}.
\par
Here as well, we need to pay attention to a technical detail: as it is typical in minimax theories, the set $\Gamma$ is made of paths which are continuous in the {\it strong topology} of $W^{1,2}_0(\Omega)$. This poses some troubles, since we want to use $t\mapsto \Phi(v(\cdot,t))$ to construct a piece of the aforementioned path. This would need an extra regularity estimate granting the continuity of this path in the strong $W^{1,2}_0(\Omega)$ topology.
\par
We circumvent this problem, by proving that actually the continuity requirement in $\Gamma$ can be considerably relaxed. Actually, it is sufficient to have continuity in the $L^1$ strong topology, see Corollary \ref{coro:weakest}. We believe this result to be interesting in itself, its proof being inspired to that of \cite[Theorem 3.3]{CD}.

\subsection{Plan of the paper}
We start by setting some of the definitions and basic results on the PME, in Section \ref{sec:2}. This section also contains some basic facts on the energy functional $\mathfrak{F}_{q,\alpha}$. We quickly pursue the investigation on $\mathfrak{F}_{q,\alpha}$ in Sections \ref{sec:3} and \ref{sec:4}, dealing with the first and second critical level, respectively. Section \ref{sec:4} Â also contains a detailed discussion of the mountain pass level $\Lambda^*$. With Section \ref{sec:5} we enter into the core of the paper: here the main result is Proposition \ref{prop:stimecruciali}, containing some crucial a priori estimates for the rescaled solution $v$. Finally, in Section \ref{sec:6} we prove our main results.
\par
The paper is complemented by two final appendices, which contribute to make the paper as self-contained as possible.

\begin{ack}
L.\,B. was financially supported by the Italian grant FFABR {\it Fondo Per il Finanziamento delle attivit\`a di base}. B.\,V. was partially supported by GNAMPA of the INdAM (Istituto Nazionale di Alta Matematica). We would like to thank Ugo Gianazza, Matteo Muratori, Marco Squassina and Juan Luis V\'azquez  for the fruitful discussions and valuable suggestions.
\par
 Finally, we wish to thank an anonymous referee for his thorough reading and for coming with many comments and suggestions, which lead to an improvement of the paper.
\end{ack}

\section{Preliminaries}
\label{sec:2}

\subsection{Notation}

For an open bounded set $\Omega\subset\mathbb{R}^N$, we indicate by $W^{1,2}_0(\Omega)$ the closure of $C^\infty_0(\Omega)$ with respect to the norm
\[
\|\varphi\|_{W^{1,2}(\Omega)}=\|\varphi\|_{L^2(\Omega)}+\|\nabla \varphi\|_{L^2(\Omega;\mathbb{R}^N)}.
\]
We will endow $W^{1,2}_0(\Omega)$ with the equivalent norm
\[
\|\varphi\|_{W^{1,2}_0(\Omega)}:=\|\nabla \varphi\|_{L^2(\Omega)},\qquad \mbox{ for every } \varphi\in W^{1,2}_0(\Omega).
\]
Equivalence of these norms follows from Poincar\'e inequality, i.e. the fact that
\[
\lambda_1(\Omega)=\min_{u\in W^{1,2}_0(\Omega)} \left\{\int_\Omega |\nabla u|^2\,dx\, :\, \|u\|_{L^2(\Omega)}=1\right\}>0.
\]
For $1<q<2$, we also define
\[
\lambda_1(\Omega;q)=\min_{u\in W^{1,2}_0(\Omega)} \left\{\int_\Omega |\nabla u|^2\,dx\, :\, \|u\|_{L^q(\Omega)}=1\right\},
\]
i.\,e. the sharp constant in the Sobolev-Poincar\'e inequality
\begin{equation}
\label{sobolevpoincare}
c\,\left(\int_\Omega |\varphi|^q\,dx\right)^\frac{2}{q}\le \int_\Omega |\nabla \varphi|^2\,dx,\qquad \mbox{ for every }\varphi\in W^{1,2}_0(\Omega).
\end{equation}
By combining interpolation in Lebesgue spaces and the Poincar\'e inequality, for $1<q<2$ we also get the following interpolation inequality
\begin{equation}
\label{gagliardo}
\left(\int_\Omega |\varphi|^q\,dx\right)^\frac{2}{q}\le \Big(\lambda_1(\Omega)\Big)^{\vartheta-1}\,\left(\int_\Omega |\varphi|\,dx\right)^{2\,\vartheta}\,\left(\int_\Omega |\nabla \varphi|^2\,dx\right)^{1-\vartheta},\ \mbox{ for every }\varphi\in W^{1,2}_0(\Omega).
\end{equation}
Here the exponent $\vartheta$ is dictated by scale invariance, thus it is given by
\[
\vartheta=\frac{2}{q}-1.
\]

\subsection{Weak solutions to the PME}

In what follows, we set for every $m>1$
\[
\Phi(s)=|s|^{m-1}\,s,\qquad \mbox{ for every } s\in\mathbb{R}.
\]
In this section, we will indicate by $\Omega\subset\mathbb{R}^N$ any open bounded set, without any further assumption.
We recall some basic results on the homogeneous Dirichlet problem (HDP for short) for the \emph{signed} PME
\begin{equation}\label{mainprob}
\left\{\begin{array}{rcll}
\partial_{t} u&=&\Delta \Phi(u), & \mbox{ in } Q:=\Omega\times (0,+\infty),\\
u&=&0, & \mbox{ on } \Sigma:=\partial\Omega\times(0,+\infty),\\
u(\cdot,0)&=&u_0,& \mbox{ in } \Omega.
\end{array}
\right.
\end{equation}
In particular, we recall here the basic definitions of weak solution and some related properties, see \cite[Definition 6.5]{VaBook}. The assumption on the initial datum is justified by our settings. We set $Q_{T}=\Omega\times(0,T)$, for $T>0$.
\begin{defi}\label{weaksolutiPME}
Let $u_{0}\in L^{m+1}(\Omega)$ be such that $\Phi(u_{0})\in W_{0}^{1,2}(\Omega)$. A function $u\in C([0,+\infty); L^{m+1}(\Omega))$ is said to be a {\it weak solution} to problem \eqref{mainprob} if:
\begin{itemize}
\item[(\emph{i})] $\Phi(u)\in L^{2}_{\rm loc}([0,+\infty);W_{0}^{1,2}(\Omega))$;
\vskip.2cm
\item[(\emph{ii})] $u$ satisfies the identity
\[
\iint_{Q}\Big(\langle\nabla\Phi(u),\nabla\eta\rangle-u\,\partial_t \eta\Big)\,dx\,dt=0,
\]
for any test function $\eta\in C_{0}^\infty(Q)$;
\vskip.2cm
\item[(\emph{iii})] $u(\cdot,0)=u_{0}$, in the sense that
\[
\lim_{t\to 0^+} \|u(\cdot,t)-u_0\|_{L^{m+1}(\Omega)}=0.
\]
\end{itemize}
\end{defi}
We recall the following existence and uniqueness result, which can be found in \cite[Theorem 6.12]{VaBook}. In what follows, we use the notation
\begin{equation}
\label{g}
g(s)=|s|^\frac{m-1}{2}\,s,\qquad \mbox{ for } s\in\mathbb{R}.
\end{equation}
\begin{teo}\label{existence}
For all $u_{0}\in L^{m+1}(\Omega)$ such that $\Phi(u_{0})\in W_{0}^{1,2}(\Omega)$, there exists a unique weak solution $u$ to \eqref{mainprob}. Moreover, for all $T>0$ we have
\[
u\in L^{\infty}((0,T);L^{m+1}(\Omega)),
\]
and the energy inequalities hold
\[
\iint_{Q_{T}}|\nabla \Phi(u)|^{2}\,dx\,dt+\frac{1}{m+1}\,\int_{\Omega}|u(x,T)|^{m+1}\,dx\leq \frac{1}{m+1}\int_{\Omega}|u_{0}|^{m+1}\,dx,
\]
\[
\int_{\Omega}|\nabla\Phi(u(x,T))|^{2}\,dx+\frac{8\,m}{(m+1)^{2}}\,\iint_{Q_{T}}|\partial_{t}g(u)|^{2}\,dx\,dt\leq\int_{\Omega}|\nabla\Phi(u_{0})|^{2}\,dx.
\]
In particular, we get that $\Phi(u)\in L^{2}([0,+\infty);W_{0}^{1,2}(\Omega))$.
\end{teo}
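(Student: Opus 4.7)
The plan is to follow the classical regularization-and-compactness strategy. First I would approximate the problem by a sequence of uniformly parabolic ones: set $\Phi_\varepsilon(s)=\Phi(s)+\varepsilon s$ and pick $u_{0,\varepsilon}\in C^\infty_0(\Omega)$ with $u_{0,\varepsilon}\to u_0$ in $L^{m+1}(\Omega)$ and $\Phi(u_{0,\varepsilon})\to\Phi(u_0)$ in $W^{1,2}_0(\Omega)$. Classical quasilinear parabolic theory then yields a unique smooth solution $u_\varepsilon$ to the regularized HDP, for which every manipulation below is fully justified. At the end we let $\varepsilon\searrow 0$.

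Next I would establish the two energy inequalities at the level of $u_\varepsilon$, uniformly in $\varepsilon$. For the first, multiply the regularized equation by $\Phi_\varepsilon(u_\varepsilon)$ and integrate over $Q_T$: the time derivative term yields $\frac{1}{m+1}\int_\Omega |u_\varepsilon(T)|^{m+1}\,dx$ (plus an $O(\varepsilon)$ lower-order contribution), while integration by parts in space on $\Delta\Phi_\varepsilon(u_\varepsilon)\cdot\Phi_\varepsilon(u_\varepsilon)$ produces $\iint_{Q_T}|\nabla\Phi_\varepsilon(u_\varepsilon)|^2\,dx\,dt$. For the second, multiply by $\partial_t\Phi_\varepsilon(u_\varepsilon)$. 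Here one exploits the key algebraic identity
\[
\partial_t u_\varepsilon\cdot\partial_t\Phi(u_\varepsilon)=m\,|u_\varepsilon|^{m-1}\,|\partial_t u_\varepsilon|^2=\frac{4m}{(m+1)^2}\,|\partial_t g(u_\varepsilon)|^2,
\]
with $g$ as in \eqref{g}, while the Laplacian term contributes $\frac{1}{2}\,\frac{d}{dt}\int_\Omega|\nabla\Phi_\varepsilon(u_\varepsilon)|^2\,dx$; integrating in $t\in(0,T)$ gives the second inequality.

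These a priori bounds provide: $u_\varepsilon$ bounded in $L^\infty(0,T;L^{m+1}(\Omega))$, $\Phi(u_\varepsilon)$ in $L^2(0,T;W^{1,2}_0(\Omega))$, and $\partial_t g(u_\varepsilon)$ in $L^2(Q_T)$. Since $\nabla g(u_\varepsilon)$ can be controlled by $\nabla\Phi(u_\varepsilon)$ and $u_\varepsilon$ via a pointwise inequality (both quantities being proportional to $|u_\varepsilon|^\beta\,|\nabla u_\varepsilon|$ for appropriate $\beta$), $g(u_\varepsilon)$ is bounded in $L^2(0,T;W^{1,2}_0(\Omega))\cap H^1(0,T;L^2(\Omega))$. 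By the Aubin--Lions lemma, $g(u_\varepsilon)$ is strongly precompact in $L^2(Q_T)$, and since $g\colon\mathbb{R}\to\mathbb{R}$ is a continuous bijection with continuous inverse, we extract $u_\varepsilon\to u$ a.e.\ and then, by the uniform $L^{m+1}$ bound together with Vitali, strongly in $L^{m+1}(Q_T)$. This allows passing to the limit in the weak formulation, while the energy inequalities survive the limit by weak lower semicontinuity. The initial trace is recovered from the strong continuity in $L^{m+1}$ granted by the identity $\partial_t(|u|^{m+1}/(m+1))=\Phi(u)\,\partial_t u$ tested in a distributional sense.

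Uniqueness is obtained by the classical Oleinik duality trick: given two solutions $u_1,u_2$ with the same initial datum, subtract the equations and test the difference against $\eta(x,t)=\int_t^T[\Phi(u_1)-\Phi(u_2)](x,s)\,ds$, extended by zero for $t>T$. The monotonicity of $\Phi$, together with Young's inequality on the gradient term $\int_0^T\!\!\int_\Omega\langle\nabla(\Phi(u_1)-\Phi(u_2)),\nabla\eta\rangle\,dx\,dt$, forces $u_1\equiv u_2$. The main technical obstacle I expect is not any single step above but rather the simultaneous justification of the second energy identity together with the strong compactness needed to pass to the limit in $\Phi(u_\varepsilon)$ and $|\nabla\Phi_\varepsilon(u_\varepsilon)|^2$; the nonlinear dependence between the natural variables $u$, $\Phi(u)$ and $g(u)$ makes the interpolation between their respective bounds delicate, and is precisely the reason one introduces the auxiliary function $g$.
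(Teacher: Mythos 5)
The paper does not prove Theorem~\ref{existence}: it simply cites \cite[Theorem 6.12]{VaBook} for existence and uniqueness, and \cite[Theorems 5.7, 6.13]{VaBook} for the two energy inequalities and the $L^\infty(L^{m+1})$ bound. Your proposal is an attempt to reconstruct the argument from scratch along the standard lines of that monograph (uniformly parabolic regularization, the two multiplier computations, compactness, Oleinik duality). The multiplier computations and the algebraic identity relating $\partial_t u\,\partial_t\Phi(u)$ to $|\partial_t g(u)|^2$ are correct, the factor of $8m/(m+1)^2$ coming from integrating $\tfrac12\frac{d}{dt}\int|\nabla\Phi_\varepsilon(u_\varepsilon)|^2$; the duality argument for uniqueness is also the standard one and is sound (there Young's inequality is not actually needed, since the gradient term already has a sign).

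There is, however, a genuine gap in the compactness step. You assert that $\nabla g(u_\varepsilon)$ is controlled pointwise by $\nabla\Phi(u_\varepsilon)$ and $u_\varepsilon$ because both are of the form $|u_\varepsilon|^\beta|\nabla u_\varepsilon|$. But the exponents are different and go the wrong way: $\nabla g(u)=\tfrac{m+1}{2}|u|^{(m-1)/2}\nabla u$ while $\nabla\Phi(u)=m|u|^{m-1}\nabla u$, and since $(m-1)/2<m-1$ for $m>1$, near $u=0$ one has $|u|^{(m-1)/2}\gg|u|^{m-1}$, so $|\nabla g(u)|$ is \emph{not} controlled by $|\nabla\Phi(u)|$ (using $\Phi_\varepsilon$ instead makes the constant blow up as $\varepsilon\to 0$). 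Consequently you have no uniform $L^2(0,T;W^{1,2}_0)$ bound for $g(u_\varepsilon)$, and Aubin--Lions cannot be applied in the form you invoke. The correct route, which the paper itself uses in the analogous compactness argument of Proposition~\ref{prop:stimecruciali} (Step 4), is to exploit the H\"older modulus of continuity of $g\circ\Phi^{-1}$: the elementary inequality \eqref{egolo} gives $|g(A)-g(B)|\le C\,|\Phi(A)-\Phi(B)|^{(m+1)/(2m)}$, which transfers the uniform $W^{1,2}$ bound on $\Phi(u_\varepsilon)$ into a uniform \emph{fractional} Sobolev bound on $g(u_\varepsilon)$; combined with the $L^2$ bound on $\partial_t g(u_\varepsilon)$ and Simon's lemma this yields the desired strong compactness of $g(u_\varepsilon)$ in $C([0,T];L^2(\Omega))$, and hence of $u_\varepsilon$ in $L^{m+1}$. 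With that substitution your sketch matches the standard proof; as written, the compactness step is broken.
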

The second part of Theorem \ref{existence} follows from \cite[Theorem 6.13, Theorem 5.7]{VaBook}.
\vskip.2cm

\subsection{Energy functional}
\label{sec:energyfunctional}

We still indicate by $\Omega\subset\mathbb{R}^N$ any open bounded set, without any further assumption, unless explicitly stated.
Let $1<q<2$ and $\alpha>0$, we introduce the functional defined over $W^{1,2}_0(\Omega)$ by
\begin{equation}
\label{functionalF}
\mathfrak{F}_{q,\alpha}(\varphi)=\frac{1}{2}\,\int_\Omega |\nabla \varphi|^2\,dx-\frac{\alpha}{q}\,\int_\Omega |\varphi|^q\,dx.
\end{equation}
Any critical point $u\in W^{1,2}_0(\Omega)$ of this functional is a weak solution of the sublinear Lane-Emden equation
\begin{equation}
\label{LE}
-\Delta u=\alpha\,|u|^{q-2}\,u,\qquad \mbox{ in }\Omega.
\end{equation}
In other words, it verifies
 \begin{equation}
\label{LEweak}
\int_\Omega \langle\nabla u,\nabla \varphi\rangle\,dx=\alpha\,\int_\Omega |u|^{q-2}\,u\,\varphi\,dx,\qquad \mbox{ for every }\varphi\in W^{1,2}_0(\Omega).
\end{equation}
By taking the test function $\varphi=u$ in \eqref{LEweak}, we get in particular
\[
\int_\Omega |\nabla u|^2\,dx=\alpha\,\int_\Omega |u|^q\,dx.
\]
Thus, the energy of any critical point $u$ can be written as follows
\begin{equation}
\label{ecrit}
\mathfrak{F}_{q,\alpha}(u)=\left(\frac{1}{2}-\frac{1}{q}\right)\int_\Omega |\nabla u|^2\,dx=\left(\frac{\alpha}{2}-\frac{\alpha}{q}\right)\,\int_\Omega |u|^q\,dx.
\end{equation}
We indicate by
\[
\mathrm{Crit}(\mathfrak{F}_{q,\alpha})=\Big\{\lambda\in\mathbb{R}\, :\, \lambda=\mathfrak{F}_{q,\alpha}(u) \mbox{ for some } u\in W^{1,2}_0(\Omega) \mbox{ solving }\eqref{LE}\Big\},
\]
the collection of all critical values of $\mathfrak{F}_{q,\alpha}$.
\begin{lm}
\label{lm:closed}
$\mathrm{Crit}(\mathfrak{F}_{q,\alpha})$ is a closed subset of $(-\infty,0]$.
\end{lm}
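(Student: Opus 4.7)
The plan is to handle the two parts of the statement separately. For the inclusion in $(-\infty,0]$, the identity \eqref{ecrit} does the job immediately: for any critical value $\lambda=\mathfrak{F}_{q,\alpha}(u)$ with $u$ solving \eqref{LE}, we have
\[
\lambda=\left(\frac{1}{2}-\frac{1}{q}\right)\int_\Omega|\nabla u|^2\,dx,
\]
and since $1<q<2$ forces $1/2-1/q<0$, we conclude $\lambda\le 0$ (with equality at $u\equiv 0$).

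For closedness, I would take a sequence $\lambda_n\in\mathrm{Crit}(\mathfrak{F}_{q,\alpha})$ with $\lambda_n\to\lambda$, pick associated solutions $u_n\in W^{1,2}_0(\Omega)$ of \eqref{LE} with $\mathfrak{F}_{q,\alpha}(u_n)=\lambda_n$, and show $\lambda$ is itself a critical value. Using \eqref{ecrit} again,
\[
\int_\Omega |\nabla u_n|^2\,dx=\frac{\lambda_n}{\frac{1}{2}-\frac{1}{q}},
\]
so $(u_n)$ is bounded in $W^{1,2}_0(\Omega)$. Up to extracting a subsequence, I get $u_n\rightharpoonup u_\infty$ weakly in $W^{1,2}_0(\Omega)$, strongly in $L^q(\Omega)$ by Rellich (since $q<2\le 2^*$), and pointwise almost everywhere.

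Next I would pass to the limit in the weak formulation \eqref{LEweak}. The left-hand side is handled by weak convergence of gradients. For the right-hand side, the nonlinearity $s\mapsto|s|^{q-2}s$ is continuous from $L^q(\Omega)$ into its dual $L^{q/(q-1)}(\Omega)$ (since $\||u_n|^{q-2}u_n\|_{L^{q/(q-1)}}=\|u_n\|_{L^q}^{q-1}$ is bounded and pointwise a.e.\ convergence combined with Vitali's theorem gives strong convergence in $L^{q/(q-1)}$). Hence $u_\infty$ weakly solves \eqref{LE}.

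It remains to identify $\lambda$ with $\mathfrak{F}_{q,\alpha}(u_\infty)$. Testing \eqref{LEweak} with $u_n$ itself yields $\int_\Omega|\nabla u_n|^2\,dx=\alpha\int_\Omega |u_n|^q\,dx$, and the right-hand side converges to $\alpha\int_\Omega|u_\infty|^q\,dx=\int_\Omega|\nabla u_\infty|^2\,dx$ by the strong $L^q$-convergence and the fact that $u_\infty$ also solves \eqref{LE}. Therefore $\|\nabla u_n\|_{L^2}\to \|\nabla u_\infty\|_{L^2}$, which upgrades the weak convergence of $u_n$ to strong convergence in $W^{1,2}_0(\Omega)$. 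Consequently $\mathfrak{F}_{q,\alpha}(u_n)\to\mathfrak{F}_{q,\alpha}(u_\infty)$, i.e.\ $\lambda=\mathfrak{F}_{q,\alpha}(u_\infty)\in\mathrm{Crit}(\mathfrak{F}_{q,\alpha})$, as desired.

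The only non-routine step is the passage to the limit in the nonlinear term $|u_n|^{q-2}u_n$, and this is standard thanks to the subcriticality coming from $q<2$, which ensures compact Sobolev embeddings. No growth restrictions or topological subtleties appear, so I do not expect any serious obstacle.
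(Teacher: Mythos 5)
Your proposal is correct and follows essentially the same route as the paper: bound the critical points via \eqref{ecrit}, extract a weakly $W^{1,2}_0$-convergent and strongly $L^q$-convergent subsequence, pass to the limit in the weak formulation \eqref{LEweak}, and identify the limit energy. The only minor difference is in the last step: the paper reads off $\lambda=\mathfrak{F}_{q,\alpha}(u_\infty)$ directly from the $L^q$-form of \eqref{ecrit} (using only the strong $L^q$ convergence), whereas you first upgrade to strong $W^{1,2}_0$ convergence and then invoke continuity of $\mathfrak{F}_{q,\alpha}$ — a slightly longer but equally valid path.
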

\begin{proof}
From formula \eqref{ecrit} we see that $\mathrm{Crit}(\mathfrak{F}_{q,\alpha})\subset(-\infty,0]$. Let us show that this is a closed subset.
Let $\{\lambda_n\}_{n\in\mathbb{N}}$ be a sequence of critical values, such that
\[
\lim_{n\to\infty}\lambda_n=\lambda.
\]
We need to show that $\lambda$ is a critical value, as well. Let $u_n\in W^{1,2}_0(\Omega)$ be a solution of \eqref{LE}, such that
\[
\mathfrak{F}_{q,\alpha}(u_n)=\lambda_n.
\]
By using \eqref{ecrit}, we have
\[
\lambda_n=\mathfrak{F}_{q,\alpha}(u_n)=\left(\frac{1}{2}-\frac{1}{q}\right)\,\int_\Omega |\nabla u_n|^2\,dx.
\]
In particular, there exists a constant $C$ such that
\[
\int_\Omega |\nabla u_n|^2\,dx\le C,\qquad \mbox{ for every }n\in\mathbb{N}.
\]
By the Rellich-Kondra\v{s}hov Theorem and the weak closedness of $W^{1,2}_0(\Omega)$, there exists $u\in W^{1,2}_0(\Omega)$ such that $\{u_n\}_{n\in\mathbb{N}}$ converges (up to a subsequence) to $u$, weakly in $W^{1,2}_0(\Omega)$ and strongly in $L^q(\Omega)$. Thus, for every $\varphi\in W^{1,2}_0(\Omega)$, we can pass to the limit in
\[
\int_\Omega \langle\nabla u_n,\nabla \varphi\rangle\,dx=\alpha\,\int_\Omega |u_n|^{q-2}\,u_n\,\varphi\,dx,
\]
and obtain that $u$ verifies
\[
\int_\Omega \langle\nabla u,\nabla \varphi\rangle\,dx=\alpha\,\int_\Omega |u|^{q-2}\,u\,\varphi\,dx.
\]
This shows that $u$ is a critical point for $\mathfrak{F}_{q,\alpha}$. In order to compute its energy, it is enough to use again \eqref{ecrit}, so to infer
\[
\begin{split}
\lambda=\lim_{n\to\infty}\lambda_n=\lim_{n\to\infty}\mathfrak{F}_{q,\alpha}(u_n)&=\lim_{n\to\infty}\left(\frac{\alpha}{2}-\frac{\alpha}{q}\right)\,\int_\Omega |u_n|^q\,dx\\
&=\left(\frac{\alpha}{2}-\frac{\alpha}{q}\right)\,\int_\Omega |u|^q\,dx=\mathfrak{F}_{q,\alpha}(u).
\end{split}
\]
This shows that $\lambda$ is a critical value, as desired.
\end{proof}
In what follows, for every $u\in W^{1,2}_0(\Omega)$ we denote by $d\,\mathfrak{F}_{q,\alpha}(u)$ the Fr\'echet differential of $\mathfrak{F}_{q,\alpha}$ at $u$. This is the linear continuous functional defined on $W^{1,2}_0(\Omega)$ by
\[
d\,\mathfrak{F}_{q,\alpha}(u)[\varphi]:=\int_\Omega \langle \nabla u,\nabla\varphi\rangle\,dx-\alpha\,\int_\Omega |u|^{q-2}\,u\,\varphi\,dx,\qquad \mbox{ for every } \varphi\in W^{1,2}_0(\Omega).
\]
\begin{lm}
\label{lm:PS}
Let $1<q<2$ and $\alpha>0$. Let $\Omega\subset \mathbb{R}^N$ be an open bounded set. The functional
$\mathfrak{F}_{q,\alpha}$ is of class $C^1$ and verifies the so-called {\rm Palais-Smale condition}, i.\,e. from every sequence $\{u_n\}_{n\in\mathbb{N}}\subset W^{1,2}_0(\Omega)$ satisfying the following properties:
\begin{enumerate}
\item $|\mathfrak{F}_{q,\alpha}(u_n)|\le C$, for every $n\in\mathbb{N}$;
\vskip.2cm
\item it holds
\[
\sup_{\|\varphi\|_{W^{1,2}_0(\Omega)}=1}\Big|d\,\mathfrak{F}_{q,\alpha}(u_n)[\varphi]\Big|=o(1),\qquad \mbox{ as } n\to\infty;
\]
\end{enumerate}
we can extract a subsequence strongly converging in $W^{1,2}_0(\Omega)$.
\end{lm}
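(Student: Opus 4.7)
The plan is to exploit the subquadratic nature of the perturbation ($1<q<2$), which makes the functional coercive and the compactness argument comparatively straightforward, in contrast to the more delicate superlinear case.

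First I would verify the $C^1$ regularity. The quadratic part $\varphi\mapsto \frac{1}{2}\int_\Omega |\nabla\varphi|^2\,dx$ is clearly smooth on $W^{1,2}_0(\Omega)$. For the Lebesgue term, the map $\varphi\mapsto \frac{1}{q}\int_\Omega |\varphi|^q\,dx$ is of class $C^1$ on $L^q(\Omega)$ since $q>1$, with G\^ateaux derivative $\varphi\mapsto \int_\Omega |\varphi|^{q-2}\,\varphi\,\psi\,dx$; composing with the continuous embedding $W^{1,2}_0(\Omega)\hookrightarrow L^q(\Omega)$ (ensured by the Sobolev-Poincar\'e inequality \eqref{sobolevpoincare}) yields the $C^1$ regularity of $\mathfrak{F}_{q,\alpha}$ and the expression of $d\,\mathfrak{F}_{q,\alpha}$ recalled before the statement.

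Next, I would establish boundedness of $\{u_n\}_{n\in\mathbb{N}}$ in $W^{1,2}_0(\Omega)$, using only assumption (1). From the definition of $\mathfrak{F}_{q,\alpha}$ and \eqref{sobolevpoincare}, we get
\[
\frac{1}{2}\int_\Omega |\nabla u_n|^2\,dx \le C + \frac{\alpha}{q}\int_\Omega |u_n|^q\,dx \le C + \frac{\alpha}{q}\,\Big(\lambda_1(\Omega;q)\Big)^{-q/2}\,\left(\int_\Omega |\nabla u_n|^2\,dx\right)^{q/2}.
\]
Since $q<2$, the right-hand side grows strictly more slowly than the left-hand side, which forces $\|\nabla u_n\|_{L^2(\Omega)}$ to stay bounded. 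By Rellich-Kondra\v{s}hov and reflexivity, up to subsequences we have $u_n\rightharpoonup u$ weakly in $W^{1,2}_0(\Omega)$ and strongly in $L^q(\Omega)$, for some $u\in W^{1,2}_0(\Omega)$.

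Finally, to upgrade to strong $W^{1,2}_0$-convergence I would test the differential against $u_n-u$. Since $\{u_n-u\}_{n\in\mathbb{N}}$ is bounded in $W^{1,2}_0(\Omega)$, assumption (2) yields
\[
d\,\mathfrak{F}_{q,\alpha}(u_n)[u_n-u]=\int_\Omega \langle \nabla u_n,\nabla (u_n-u)\rangle\,dx-\alpha\,\int_\Omega |u_n|^{q-2}\,u_n\,(u_n-u)\,dx=o(1).
\]
The sequence $\{|u_n|^{q-2}\,u_n\}_{n\in\mathbb{N}}$ is bounded in $L^{q/(q-1)}(\Omega)$ (its $L^{q/(q-1)}$-norm to the power $q/(q-1)$ equals $\|u_n\|_{L^q(\Omega)}^q$), while $u_n-u\to 0$ strongly in $L^q(\Omega)$; H\"older's inequality therefore kills the second integral. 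The first integral equals $\|\nabla u_n\|_{L^2(\Omega)}^2-\int_\Omega \langle \nabla u_n,\nabla u\rangle\,dx$, and by weak convergence the scalar product term tends to $\|\nabla u\|_{L^2(\Omega)}^2$. We deduce $\|\nabla u_n\|_{L^2(\Omega)}\to \|\nabla u\|_{L^2(\Omega)}$, which together with the weak convergence $\nabla u_n\rightharpoonup \nabla u$ in $L^2(\Omega;\mathbb{R}^N)$ forces strong convergence in the Hilbert space $W^{1,2}_0(\Omega)$, as required.

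The only delicate point is the passage to the limit in the $L^q$-term, but thanks to $q<2$ we have both boundedness of $\{|u_n|^{q-2}u_n\}$ in the dual space $L^{q'}$ and the compactness of $W^{1,2}_0(\Omega)\hookrightarrow L^q(\Omega)$, so no concentration phenomena of the type typical for the Sobolev-critical or supercritical ranges can occur. In this sense, there is no genuine obstacle in the present subquadratic setting.
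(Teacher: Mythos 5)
Your proof is correct and follows essentially the same route as the paper: coercivity from the subquadratic growth (you deduce boundedness directly from the sublinear inequality rather than via the paper's Young-inequality estimate \eqref{coercive}, which is equivalent), then Rellich--Kondra\v{s}hov plus testing the differential against $u_n-u$. The only cosmetic difference is in the last step: the paper estimates $\|u_n-u\|^2_{W^{1,2}_0(\Omega)}$ directly by also invoking $d\,\mathfrak{F}_{q,\alpha}(u)[u_n-u]$, while you conclude via norm convergence of the gradients together with weak convergence in a Hilbert space; both are standard and equally valid.
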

\begin{proof}
The fact that $\mathfrak{F}_{q,\alpha}$ is of class $C^1$ is easily shown, see for example \cite[Theorem C.1]{St}. In order to verify the Palais-Smale condition, we take a sequence $\{u_n\}_{n\in\mathbb{N}}\subset W^{1,2}_0(\Omega)$ satisfying the properties above. We first observe that $\mathfrak{F}_{q,\alpha}$ is coercive on $W^{1,2}_0(\Omega)$, thanks to the fact that for every $\varepsilon>0$ we have
\[
\mathfrak{F}_{q,\alpha}(\varphi)\ge \frac{1}{2}\,\int_\Omega |\nabla \varphi|^2\,dx-\frac{\varepsilon}{2}\,\left(\int_\Omega |\varphi|^q\,dx\right)^\frac{2}{q}-\frac{2-q}{ 2\,q}\,\alpha^\frac{2}{2-q}\,\varepsilon^{-\frac{q}{2-q}},
\]
by Young's inequality with exponents $2/q$ and $2/(2-q)$. By choosing
\[
\varepsilon=\frac{\lambda_1(\Omega;q)}{2},
\]
and using the Sobolev-Poincar\'e inequality \eqref{sobolevpoincare}, we can infer
\begin{equation}
\label{coercive}
\mathfrak{F}_{q,\alpha}(\varphi)\ge \frac{1}{4}\,\int_\Omega |\nabla \varphi|^2\,dx-C,
\end{equation}
for some constant $C=C(N,q,\alpha,\Omega)>0$. This gives the claimed coercivity of our functional.
\par
Property (1), estimate \eqref{coercive} and the compactness of the embedding $W^{1,2}_0(\Omega)\hookrightarrow L^q(\Omega)$ imply that $u_n$ converges strongly in $L^q(\Omega)$ and weakly in $W^{1,2}_0(\Omega)$ to some $u\in W^{1,2}_0(\Omega)$, up to a subsequence. In order to prove that the convergence is actually strong in $W^{1,2}_0(\Omega)$, we write
\par
\[
\begin{split}
d\,\mathfrak{F}_{q,\alpha}(u_n)\left[u_n-u\right]-d\,\mathfrak{F}_{q,\alpha}(u)\left[u_n-u\right]&=\|u_n-u\|^2_{W^{1,2}_0(\Omega)}\\
&-\alpha\,\int_\Omega \Big(|u_n|^{q-2}\,u_n-|u|^{q-2}\,u\Big)\,(u_n-u)\,dx.
\end{split}
\]
This in turn implies that
\[
\begin{split}
\|u_n-u\|^2_{W^{1,2}_0(\Omega)}&\le \Big|d\,\mathfrak{F}_{q,\alpha}(u_n)\left[u_n-u\right]\Big|+\Big|d\,\mathfrak{F}_{q,\alpha}(u)\left[u_n-u\right]\Big|\\
&+\alpha\,\int_\Omega \Big(|u_n|^{q-1}+|u|^{q-1}\Big)\,|u_n-u|\,dx.
\end{split}
\]
We now observe that
\[
\lim_{n\to\infty}\Big|d\,\mathfrak{F}_{q,\alpha}(u_n)\left[u_n-u\right]\Big|=0,
\]
thanks to property (2), the uniform bound on the norm of $u_n$ given by \eqref{coercive} and the linearity of $d\mathfrak{F}_{q,\alpha}(u_n)$. Moreover, we also have
\[
\lim_{n\to\infty} \Big|d\,\mathfrak{F}_{q,\alpha}(u)\left[u_n-u\right]\Big|=0,
\]
by weak convergence of $u_n$ to $u$ in $W^{1,2}_0(\Omega)$. Finally, the strong $L^q$ convergence permits to infer that
\[
\lim_{n\to\infty}\int_\Omega \Big(|u_n|^{q-1}+|u|^{q-1}\Big)\,|u_n-u|\,dx=0,
\]
as well. This concludes the proof.
\end{proof}

\section{The ground state level}
\label{sec:3}

The set $\mathrm{Crit}(\mathfrak{F}_{q,\alpha})\subset (-\infty,0]$ is bounded, since our functional has a global minimum. This is the content of the next result.
\begin{prop}
\label{prop:min}
Let $1<q<2$ and $\alpha>0$. Let $\Omega\subset\mathbb{R}^N$ be an open bounded connected set. Then the functional $\mathfrak{F}_{q,\alpha}$ admits exactly two minimizers on $W^{1,2}_0(\Omega)$, given by $w$ and $-w$. Moreover, we have
\[
w\in L^\infty(\Omega)\qquad \mbox{ and }\qquad w>0\qquad \mbox{ on }\Omega.
\]
Finally, if $\Omega$ has $C^1$ boundary, then we also have $w\in C(\overline{\Omega})$.
\end{prop}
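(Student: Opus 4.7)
The plan is to obtain the minimizer by the direct method, identify it with the unique positive solution of the Lane--Emden equation via a ``hidden convexity'' trick, and then show that every other minimizer must differ from it only by a global sign.

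\emph{Existence and positivity.} Coercivity of $\mathfrak{F}_{q,\alpha}$ on $W^{1,2}_0(\Omega)$ is contained in the estimate proved in Lemma \ref{lm:PS}, while weak lower semicontinuity follows from the weak lsc of the Dirichlet integral together with the compact embedding $W^{1,2}_0(\Omega)\hookrightarrow L^q(\Omega)$, so the direct method produces a minimizer. Testing with $t\,\varphi_0$ for small $t>0$ and any $\varphi_0\not\equiv 0$ shows $\Lambda_1<0$, so the minimizer is nontrivial. Because $\bigl|\nabla|\varphi|\bigr|=|\nabla\varphi|$ a.e., the map $\varphi\mapsto|\varphi|$ does not increase $\mathfrak{F}_{q,\alpha}$, so I may choose a nonnegative minimizer $w$. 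As a critical point, $w$ weakly solves $-\Delta w=\alpha\,w^{q-1}$. The sublinearity $q-1<1$ makes standard Moser/Stampacchia iteration available and yields $w\in L^\infty(\Omega)$; when $\partial\Omega\in C^1$, classical $W^{2,p}$ boundary estimates with bounded right-hand side upgrade this to $w\in C(\overline{\Omega})$. Finally, $-\Delta w\ge 0$ in $\Omega$ together with $w\ge 0$, $w\not\equiv 0$ and the connectedness of $\Omega$ give $w>0$ in $\Omega$ by the strong maximum principle.

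\emph{Uniqueness up to sign.} For the uniqueness of the nonnegative minimizer I rely on the Benguria--Brezis--Lieb ``hidden convexity'': for nonnegative $f,g\in W^{1,2}_0(\Omega)$ and $\sigma_t:=\bigl((1-t)\,f^2+t\,g^2\bigr)^{1/2}\in W^{1,2}_0(\Omega)$,
\[
\int_\Omega|\nabla \sigma_t|^2\,dx\le (1-t)\int_\Omega|\nabla f|^2\,dx+t\int_\Omega|\nabla g|^2\,dx,
\]
while the strict concavity of $r\mapsto r^{q/2}$ on $(0,\infty)$ (since $q<2$) gives pointwise
\[
\sigma_t^{\,q}=\bigl((1-t)\,f^2+t\,g^2\bigr)^{q/2}\ge (1-t)\,f^q+t\,g^q,
\]
with strict inequality at every point where $f\neq g$ and $f^2+g^2>0$. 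Hence $\mathfrak{F}_{q,\alpha}(\sigma_t)\le (1-t)\,\mathfrak{F}_{q,\alpha}(f)+t\,\mathfrak{F}_{q,\alpha}(g)$, strictly unless $f\equiv g$. Applied to two nonnegative minimizers (both attaining $\Lambda_1$) this forces them to coincide, so the nonnegative minimizer $w$ is unique. To treat an arbitrary minimizer $\tilde w$, the modulus argument gives that $|\tilde w|$ is a nonnegative minimizer, hence $|\tilde w|=w>0$ in $\Omega$; thus $\sigma:=\tilde w/w$ takes values in $\{\pm 1\}$ a.e. Since $w\in C(\overline{\Omega})$ and $w>0$ in $\Omega$, the function $1/w$ is locally bounded in $\Omega$ with $\nabla(1/w)=-\nabla w/w^2\in L^2_{\mathrm{loc}}(\Omega)$, so $\sigma\in W^{1,2}_{\mathrm{loc}}(\Omega)$. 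Differentiating $\sigma^2\equiv 1$ yields $2\,\sigma\,\nabla\sigma=0$ a.e., hence $\nabla\sigma=0$ a.e.\ in $\Omega$. By connectedness of $\Omega$, $\sigma$ is a.e.\ equal to a constant in $\{\pm 1\}$, so $\tilde w=\pm w$.

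The delicate step is the uniqueness part. The Benguria--Brezis--Lieb inequality for the gradient term must be applied with some attention to the zero set of $f^2+g^2$ (a regularization of the form $\bigl((1-t)\,f^2+t\,g^2+\delta\bigr)^{1/2}$, followed by $\delta\to 0$, is the standard remedy), and the identification of $\sigma=\tilde w/w$ as an element of $W^{1,2}_{\mathrm{loc}}(\Omega)$ relies crucially on the continuity and strict positivity of $w$ on compact subsets of $\Omega$; this is why the regularity and positivity statements have to be established before the uniqueness argument.
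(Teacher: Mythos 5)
Your proposal is correct, and the existence, nontriviality, positivity and regularity parts run along the same lines as the paper's (which simply invokes ``classical Elliptic Regularity Theory'' where you spell out Moser iteration and boundary estimates; one small caveat there: global $W^{2,p}$ estimates ordinarily require $\partial\Omega\in C^{1,1}$, so for a merely $C^1$ boundary the continuity up to $\partial\Omega$ is better obtained from barrier/Wiener-regularity arguments for bounded right-hand sides --- the conclusion is unaffected). Where you genuinely diverge is the rigidity step showing that every minimizer is $\pm w$. The paper first proves that \emph{every} minimizer has constant sign: if $u$ were a sign-changing minimizer, then $u$ and $|u|$ would both solve the Euler--Lagrange equation, so their average $u^+=(|u|+u)/2$ would be a nonnegative, nontrivial, weakly superharmonic function, hence strictly positive a.e.\ by the minimum principle on the connected set $\Omega$, contradicting $u^-\not\equiv 0$; uniqueness of the positive minimizer is then quoted from \cite[Theorem 3.1]{BraFra}, which is precisely the hidden-convexity argument you make explicit. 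You instead pass through the quotient $\sigma=\tilde w/w$, a $\{\pm1\}$-valued $W^{1,2}_{\rm loc}$ function, and conclude it is constant by connectedness. Both routes are sound and both use connectedness at the decisive moment; the paper's superharmonicity trick is slightly more economical in that it never divides by $w$ and so needs no interior lower bound or continuity of $w$ at that stage, whereas your quotient argument makes that dependence explicit (and correctly orders the steps accordingly). Your remark on regularizing the degenerate set in the hidden-convexity inequality is the standard fix and matches the cited reference.
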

\begin{proof}
Existence of a minimizer is a standard fact, it is sufficient to use the Direct Method in the Calculus of Variations. Indeed, the functional is weakly lower semicontinuous and coercive on $W^{1,2}_0(\Omega)$ by \eqref{coercive}.
\par
We notice that any minimizer is not trivial. Indeed, for $\varphi\in W^{1,2}_0(\Omega)\setminus\{0\}$ and $t>0$, the value
\[
\mathfrak{F}_{q,\alpha}(t\,\varphi)=\frac{t^2}{2}\,\int_\Omega |\nabla \varphi|^2\,dx-\frac{\alpha}{q}\,t^q\,\int_\Omega |\varphi|^q\,dx,
\]
is strictly negative for $t$ sufficiently small, thanks to the fact that $q<2$. This shows that
\[
\min_{\varphi\in W^{1,2}_0(\Omega)}\left\{\frac{1}{2}\,\int_\Omega |\nabla \varphi|^2\,dx-\frac{\alpha}{q}\,\int_\Omega |\varphi|^q\,dx\right\}<0,
\]
and thus $\varphi\equiv 0$ can not be a minimizer.
\par
The fact that there exists at least a positive minimizer easily follows from the fact that
\[
\mathfrak{F}_{q,\alpha}(\varphi)=\mathfrak{F}_{q,\alpha}(|\varphi|),\qquad \mbox{ for every } \varphi\in W^{1,2}_0(\Omega).
\]
Moreover, positive minimizers are unique, see for example \cite[Theorem 3.1]{BraFra}. This shows the existence of a unique positive minimizer $w$. This must solve the relevant Euler-Lagrange equation, given by \eqref{LE}. In particular, $w$ is a nontrivial weakly superharmonic function and thus it is strictly positive by the minimum principle.
By observing that
\[
\mathfrak{F}_{q,\alpha}(-w)=\mathfrak{F}_{q,\alpha}(w),
\]
we obtain that $-w$ is the unique negative minimizer.
Finally, the claimed regularity for $w$ follows from the classical Elliptic Regularity Theory.
\par
We are only left to show that any minimizer must have constant sign in $\Omega$. We can use an argument based on the minimum principle, as in \cite[Proposition 2.3]{BraFra}.
Let us suppose that there exists a minimizer $u\in W^{1,2}_0(\Omega)$ such that
\[
u^+\not\equiv 0\qquad \mbox{ and }\qquad u^-\not\equiv 0.
\]
We have already observed that $|u|$ is still a minimizer. Thus both $u$ and $|u|$ are weak solutions of the Lane-Emden equation, i.e. we have
\[
\int_\Omega \langle \nabla u,\nabla \varphi\rangle\,dx=\alpha\,\int_\Omega |u|^{q-2}\,u\,\varphi\,dx,\qquad \mbox{ for every } \varphi\in W^{1,2}_0(\Omega),
\]
and
\[
\int_\Omega \langle \nabla |u|,\nabla \varphi\rangle\,dx=\alpha\,\int_\Omega |u|^{q-1}\,\varphi\,dx,\qquad \mbox{ for every } \varphi\in W^{1,2}_0(\Omega).
\]
By taking a convex combination of these two identities, we get
\[
\int_\Omega \left\langle \nabla \frac{|u|+u}{2},\nabla \varphi\right\rangle\,dx=\alpha\,\int_\Omega |u|^{q-2}\,\frac{|u|+u}{2}\,\varphi\,dx,\qquad \mbox{ for every } \varphi\in W^{1,2}_0(\Omega).
\]
We now observe that
\[
u^+=\frac{|u|+u}{2},
\]
thus the previous identity implies in particular that $u^+\in W^{1,2}_0(\Omega)$ satisfies
\[
\int_\Omega \langle \nabla u^+,\nabla \varphi\rangle\,dx\ge 0,\qquad \mbox{ for every } \varphi\in W^{1,2}_0(\Omega) \mbox{ such that }\varphi\ge 0.
\]
This means that $u^+$ is a non-negative weakly superharmonic function. By the minimum principle and the fact that $u^+\not\equiv 0$, we get that $u^+>0$ almost everywhere\footnote{Here we use that $\Omega$ is connected.} in $\Omega$. This contradicts the assumption $u^-\not\equiv 0$.
\end{proof}
\begin{oss}[Disconneted sets]
\label{oss:disconnected}
If $\Omega\subset\mathbb{R}^N$ is an open bounded set with $k$ connected components $\Omega_1,\dots,\Omega_k$, then by locality of the functional $\mathfrak{F}_{q,\alpha}$ the problem
\[
\min_{\varphi\in W^{1,2}_0(\Omega)} \mathfrak{F}_{q,\alpha}(\varphi),
\]
has exactly $2^k$ solutions. These are given by
\[
\left\{\sum_{i=1}^k \delta_i\,w_i\, :\, \delta_i\in \{-1,1\},\ w_i\in W^{1,2}_0(\Omega_i) \mbox{ positive minimizer of } \mathfrak{F}_{q,\alpha}\right\}.
\]
We further observe that the positive minimizers are still unique, even in this case.
\end{oss}
In what follows, we indicate by
\[
\Lambda_1:=\min_{\varphi\in W^{1,2}_0(\Omega)} \mathfrak{F}_{q,\alpha}(\varphi),
\]
the minimal level of our energy functional $\mathfrak{F}_{q,\alpha}$.
The next result asserts that the only nontrivial critical points having constant sign are $w$ and $-w$.
\begin{prop}
\label{prop:constantsign}
Let $1<q<2$ and $\alpha>0$. Let $\Omega\subset\mathbb{R}^N$ be an open bounded connected set. If $u\in W^{1,2}_0(\Omega)$ is a non-trivial critical point of $\mathfrak{F}_{q,\alpha}$ with
\[
\mathfrak{F}_{q,\alpha}(u)>\Lambda_1,
\]
then we have
\[
u^+\not\equiv 0\qquad \mbox{ and }\qquad u^-\not\equiv 0.
\]
\end{prop}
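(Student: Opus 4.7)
The plan is to prove the contrapositive: any non-trivial critical point of $\mathfrak{F}_{q,\alpha}$ that does not change sign must coincide with $w$ or $-w$, and therefore realize the ground state energy $\Lambda_1$. Assuming this, a critical point with $\mathfrak{F}_{q,\alpha}(u) > \Lambda_1$ cannot have constant sign, forcing $u^+\not\equiv 0$ and $u^-\not\equiv 0$.

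So let $u\in W^{1,2}_0(\Omega)$ be a non-trivial critical point of $\mathfrak{F}_{q,\alpha}$ with constant sign. Replacing $u$ by $-u$ if necessary (which gives a critical point with the same energy), we may assume $u\ge 0$. Then $u$ is a non-negative, non-trivial weak solution of $-\Delta u=\alpha\,u^{q-1}$ in $\Omega$. In particular $u$ is weakly superharmonic, and since $\Omega$ is connected the strong minimum principle yields $u>0$ almost everywhere in $\Omega$, exactly as in the final part of the proof of Proposition \ref{prop:min}.

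The core step is then a uniqueness result for \emph{positive solutions} (not merely positive minimizers) of the sublinear Lane-Emden equation \eqref{LE}. Because $1<q<2$, the nonlinearity $f(s)=\alpha\,s^{q-1}$ satisfies the Brezis--Oswald condition that $s\mapsto f(s)/s=\alpha\,s^{q-2}$ is strictly decreasing on $(0,+\infty)$. The classical Brezis--Oswald/Di\'az--Saa argument --- which is in fact the same ``hidden convexity'' tool the authors invoke in Lemma \ref{lm:mistero} --- then implies that the positive weak solution of \eqref{LE} is unique. Since $w$ from Proposition \ref{prop:min} is a positive solution, we conclude $u\equiv w$, and thus $\mathfrak{F}_{q,\alpha}(u)=\mathfrak{F}_{q,\alpha}(w)=\Lambda_1$, contradicting $\mathfrak{F}_{q,\alpha}(u)>\Lambda_1$.

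The only non-routine point is the uniqueness of positive solutions, which is strictly stronger than the uniqueness of positive minimizers stated in Proposition \ref{prop:min}. The cleanest route is a Picone-type identity: testing the equation for $u$ against $(u^2-w^2)/u$ and the equation for $w$ against $(u^2-w^2)/w$, subtracting, and using the pointwise inequality $\bigl|\nabla\sqrt{v}\bigr|^2\le \langle\nabla u,\nabla(u/\cdot)\rangle$-type bound together with the strict monotonicity of $s\mapsto \alpha\,s^{q-2}$, forces $u\equiv w$. This is the only point where the connectedness of $\Omega$ and the sublinearity $q<2$ are both essential; the rest of the argument is a direct application of the minimum principle and Proposition \ref{prop:min}.
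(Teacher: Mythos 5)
Your proof is correct, but it takes a genuinely different route from the paper's. The paper also reduces to a positive critical point $u>0$ a.e.\ via the minimum principle, but then inserts the test function $|\psi|^q/(\varepsilon+u)^{q-1}$ into the weak formulation for an arbitrary $\psi\in C^\infty_0(\Omega)$, applies the generalized Picone inequality of \cite[Proposition 2.9]{BFK}, lets $\varepsilon\to 0$ and uses Young's inequality to conclude that $\mathfrak{F}_{q,\alpha}(u)\le\mathfrak{F}_{q,\alpha}(\psi)$ for every $\psi$; hence $u$ is a \emph{global minimizer}, so $\mathfrak{F}_{q,\alpha}(u)=\Lambda_1$, the desired contradiction. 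You instead compare $u$ with $w$ alone and delegate the comparison to the Brezis--Oswald uniqueness theorem for positive solutions of the sublinear equation \eqref{LE}; this is a legitimate citation (the hypothesis that $s\mapsto\alpha\,s^{q-2}$ is strictly decreasing does hold for $q<2$), and it yields the slightly stronger conclusion that $u\equiv w$. The trade-offs: the paper's argument is self-contained with tools it has already set up and never needs to identify $u$; yours is shorter modulo the citation, but the one step you sketch --- testing against $(u^2-w^2)/u$ and $(u^2-w^2)/w$ --- is only formal as written, since these quotients need not belong to $W^{1,2}_0(\Omega)$ a priori. To make it rigorous you must regularize (e.g.\ use $(u^2-(w+\varepsilon)^2)/(u+\varepsilon)$ and pass to the limit, after noting that $u,w\in L^\infty(\Omega)$ by elliptic regularity for the sublinear right-hand side), which is precisely the $\varepsilon$-regularization the paper carries out explicitly. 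With that detail supplied, your argument is complete.
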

\begin{proof}
Let us suppose on the contrary that $u$ has constant sign. Without loss of generality, we can assume that $u^-\equiv 0$. By the minimum principle, we get that
\[
u=u^+>0,\qquad \mbox{ a.\,e. on }\Omega.
\]
For every $\varepsilon>0$ and $\psi\in C^\infty_0(\Omega)$, we insert in \eqref{LEweak} the test function
\[
\varphi=\frac{|\psi|^q}{(\varepsilon+u)^{q-1}}.
\]
We obtain
\[
\begin{split}
\alpha\,\int_\Omega \frac{u^{q-1}}{(\varepsilon+u)^{q-1}}\,|\psi|^q\,dx&=\int_\Omega \left\langle \nabla u,\nabla \frac{|\psi|^q}{(\varepsilon+u)^{q-1}} \right\rangle\,dx\\
&= \int_\Omega \left\langle \nabla (\varepsilon+u),\nabla \frac{|\psi|^q}{(\varepsilon+u)^{q-1}} \right\rangle\,dx\\
&\le \int_\Omega |\nabla (\varepsilon+u)|^{2-q}\,|\nabla \psi|^q\,dx\\
&\le \left(\int_\Omega |\nabla u|^2\,dx\right)^\frac{2-q}{2}\,\left(\int_\Omega |\nabla \psi|^2\,dx\right)^\frac{q}{2}.
\end{split}
\]
The first inequality follows from the {\it generalized Picone inequality} of \cite[Propositon 2.9]{BFK}.
\par
By taking the limit as $\varepsilon$ goes to $0$, using the Dominated Convergence Theorem and the fact that $u>0$ almost everywhere on $\Omega$, we get
\[
\alpha\,\int_\Omega |\psi|^q\,dx\le \left(\int_\Omega |\nabla u|^2\,dx\right)^\frac{2-q}{2}\,\left(\int_\Omega |\nabla \psi|^2\,dx\right)^\frac{q}{2}.
\]
By using Young's inequality with exponents $2/q$ and $2/(2-q)$, this gives
\[
\alpha\,\int_\Omega |\psi|^q\,dx\le \frac{2-q}{2}\,\int_\Omega |\nabla u|^2\,dx+\frac{q}{2}\int_\Omega |\nabla \psi|^2\,dx.
\]
By dividing both sides by $q$, this can be recast as
\[
\left(\frac{1}{2}-\frac{1}{q}\right)\,\int_\Omega |\nabla u|^2\,dx\le \mathfrak{F}_{q,\alpha}(\psi),\qquad \mbox{ for every } \psi\in C^\infty_0(\Omega).
\]
If we now use that $u$ is a critical point and recall \eqref{ecrit}, the previous estimate tells us that
\[
\mathfrak{F}_{q,\alpha}(u)\le \mathfrak{F}_{q,\alpha}(\psi),\qquad \mbox{ for every } \psi\in C^\infty_0(\Omega).
\]
By density of $C^\infty_0(\Omega)$ in $W^{1,2}_0(\Omega)$ and using the assumption on $u$, we finally get
\[
\Lambda_1<\mathfrak{F}_{q,\alpha}(u)=\inf_{\psi\in C^\infty_0(\Omega)}\mathfrak{F}_{q,\alpha}(\psi)=\Lambda_1,
\]
which gives the desired contradiction.
\end{proof}
We will repeatedly use the following property of minimizing sequences for $\mathfrak{F}_{q,\alpha}$.
\begin{lm}
\label{lm:minseq}
Let $1<q<2$ and $\alpha>0$. Let $\Omega\subset\mathbb{R}^N$ be an open bounded connected set. If $\{\varphi_n\}_{n\in\mathbb{N}}\subset W^{1,2}_0(\Omega)$ is such that
\[
\lim_{n\to\infty} \mathfrak{F}_{q,\alpha}(\varphi_n)=\Lambda_1,
\]
then we have
\[
\mbox{ either }\ \lim_{n\to\infty} \|\varphi_n-w\|_{W^{1,2}_0(\Omega)}=0\ \mbox{ or }\ \lim_{n\to\infty} \|\varphi_n-(-w)\|_{W^{1,2}_0(\Omega)}=0,
\]
up to a subsequence.
\end{lm}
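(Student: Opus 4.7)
The proof is a standard Direct-Method compactness argument, whose conclusion is pinned down by the uniqueness statement of Proposition \ref{prop:min}. Here is how I would organize it.

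First, I would exploit the coercivity estimate \eqref{coercive} established in the proof of Lemma \ref{lm:PS}. Since $\mathfrak{F}_{q,\alpha}(\varphi_n)$ is convergent, it is in particular bounded, and \eqref{coercive} yields a uniform bound on $\|\nabla \varphi_n\|_{L^2(\Omega)}$. By the Banach-Alaoglu Theorem and the Rellich-Kondra\v{s}hov compact embedding $W^{1,2}_0(\Omega)\hookrightarrow L^q(\Omega)$, we may extract a subsequence (not relabeled) and find $\varphi\in W^{1,2}_0(\Omega)$ such that $\varphi_n\rightharpoonup \varphi$ weakly in $W^{1,2}_0(\Omega)$ and $\varphi_n\to \varphi$ strongly in $L^q(\Omega)$.

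Next, I would identify the weak limit as a global minimizer. The $L^q$ convergence passes the $L^q$-term of the functional to the limit, while the weak lower semicontinuity of $\varphi\mapsto \int_\Omega |\nabla \varphi|^2\,dx$ gives
\[
\mathfrak{F}_{q,\alpha}(\varphi)\le \liminf_{n\to\infty} \mathfrak{F}_{q,\alpha}(\varphi_n)=\Lambda_1.
\]
Hence $\varphi$ is a minimizer, and by Proposition \ref{prop:min} we must have $\varphi\in\{w,-w\}$.

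Finally, I would upgrade weak convergence to strong convergence in $W^{1,2}_0(\Omega)$. Using the $L^q$ convergence together with the fact that $\mathfrak{F}_{q,\alpha}(\varphi_n)\to \Lambda_1=\mathfrak{F}_{q,\alpha}(\varphi)$, one obtains
\[
\lim_{n\to\infty}\int_\Omega |\nabla \varphi_n|^2\,dx=\int_\Omega |\nabla \varphi|^2\,dx.
\]
Since $W^{1,2}_0(\Omega)$ endowed with $\|\nabla\cdot\|_{L^2(\Omega)}$ is a Hilbert space, weak convergence combined with norm convergence implies strong convergence. This gives the desired alternative. The only mildly delicate point is that a priori different subsequences could converge to $w$ or to $-w$, but that is exactly why the conclusion is phrased up to a subsequence; there is no further obstacle.
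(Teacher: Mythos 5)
Your argument is essentially identical to the paper's: coercivity to extract a weakly convergent subsequence with strong $L^q$ convergence, lower semicontinuity to identify the limit as a minimizer (hence $w$ or $-w$ by Proposition~\ref{prop:min}), and then convergence of the Dirichlet norms to upgrade weak to strong convergence. The only cosmetic difference is that the paper invokes ``uniform convexity of the $L^2$ norm'' where you invoke the Hilbert-space Radon--Riesz property; these are the same step.
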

\begin{proof}
By assumption, we have that there exists a constant $C$ such that
\[
\mathfrak{F}_{q,\alpha}(\varphi_n)\le C,\qquad \mbox{ for every } n\in\mathbb{N}.
\]
By using again \eqref{coercive},
we can then infer that $\{\varphi_n\}_{n\in\mathbb{N}}$ is bounded in $W^{1,2}_0(\Omega)$. Thus, by compactness of the embedding $W^{1,2}_0(\Omega)\hookrightarrow L^q(\Omega)$, there exists $\varphi\in W^{1,2}_0(\Omega)$ such that (up to a subsequence) $\{\varphi_n\}_{n\in\mathbb{N}}$ converges weakly in $W^{1,2}_0(\Omega)$ and strongly in $L^q(\Omega)$ to $\varphi$. In particular, by the lower semicontinuity of $\mathfrak{F}_{q,\alpha}$, $\varphi$ must be a minimizer. By Proposition \ref{prop:min}, we have that either $\varphi=w$ or $\varphi=-w$. Moreover, we have
\[
\begin{split}
\lim_{n\to\infty} \left[\frac{1}{2}\,\int_\Omega |\nabla \varphi_n|^2\,dx-\frac{1}{2}\,\int_\Omega |\nabla w|^2\,dx\right]&=\lim_{n\to\infty} \Big[\mathfrak{F}_{q,\alpha}(\varphi_n)-\Lambda_1\Big]\\
&+\lim_{n\to\infty}\left[\frac{\alpha}{q}\,\int_\Omega |\varphi_n|^q\,dx-\frac{\alpha}{q}\,\int_\Omega |w|^q\,dx\right]=0.
\end{split}
\]
By uniform convexity of the $L^2$ norm, this permits to upgrade the weak convergence of $\{\varphi_n\}_{n\in\mathbb{N}}$ to the strong one.
\end{proof}
The following result will play a crucial role for the proof of our main result. It asserts that the minimal level $\Lambda_1$ is isolated in the ``spectrum'' $\mathrm{Crit}(\mathfrak{F}_{q,\alpha})$. For this, some regularity assumptions on $\partial\Omega$ are needed.
\begin{prop}[Fundamental gap]
\label{prop:dasolo}
Let $1<q<2$ and $\alpha>0$. Let $\Omega\subset\mathbb{R}^N$ be an open bounded connected set, with $C^1$ boundary. By setting
\[
\Lambda_2=\inf\Big\{\lambda\in\mathrm{Crit}(\mathfrak{F}_{q,\alpha})\, :\, \lambda>\Lambda_1\Big\},
\]
we have that:
\begin{enumerate}
\item $\Lambda_2>\Lambda_1$;
\vskip.2cm
\item $\Lambda_2\in \mathrm{Crit}(\mathfrak{F}_{q,\alpha})$.
\end{enumerate}
\end{prop}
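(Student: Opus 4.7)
I would establish (1) by contradiction and then deduce (2) as a corollary of (1), the formula \eqref{ecrit} and the closedness of the spectrum proved in Lemma \ref{lm:closed}.

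For (1), assume by contradiction that $\Lambda_1=\Lambda_2$. Then there exists a sequence $\{u_n\}_{n\in\mathbb{N}}\subset W^{1,2}_0(\Omega)$ of critical points of $\mathfrak{F}_{q,\alpha}$ with $\mathfrak{F}_{q,\alpha}(u_n)>\Lambda_1$ for every $n$ and $\mathfrak{F}_{q,\alpha}(u_n)\to \Lambda_1$. By Lemma \ref{lm:minseq}, after extracting a subsequence and possibly replacing $u_n$ by $-u_n$, we may assume $u_n\to w$ strongly in $W^{1,2}_0(\Omega)$. Because $\mathfrak{F}_{q,\alpha}(u_n)>\Lambda_1$, Proposition \ref{prop:constantsign} forces each $u_n$ to be sign-changing; in particular $u_n^-\not\equiv 0$. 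The heart of the argument is to upgrade the $W^{1,2}_0(\Omega)$-convergence $u_n\to w$ to a topology strong enough to preserve the strict positivity of $w$ up to $\partial\Omega$. This is where the $C^1$-regularity of $\partial\Omega$ enters, through the recent result \cite[Theorem A]{BDF}: applied to the sequence $\{u_n\}$ of solutions of the sublinear Lane-Emden equation \eqref{LE}, it provides a uniform a priori estimate which, combined with standard elliptic regularity, promotes the convergence $u_n\to w$ to $C^1(\overline{\Omega})$ (or at least to a uniform convergence that controls the behavior near $\partial\Omega$). Once this is granted, since $w\in C(\overline{\Omega})$ with $w>0$ in $\Omega$ by Proposition \ref{prop:min}, and since Hopf's boundary point lemma on the $C^1$-domain $\Omega$ ensures that the interior normal derivative of $w$ is strictly positive on $\partial\Omega$, the uniform convergence forces $u_n>0$ in $\Omega$ for $n$ sufficiently large. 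This contradicts the sign-changing property of $u_n$ and yields $\Lambda_1<\Lambda_2$.

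For (2), take a sequence $\{\lambda_n\}\subset \mathrm{Crit}(\mathfrak{F}_{q,\alpha})$ with $\lambda_n>\Lambda_1$ and $\lambda_n\searrow \Lambda_2$; since $\mathrm{Crit}(\mathfrak{F}_{q,\alpha})\subset(-\infty,0]$ is closed by Lemma \ref{lm:closed}, passing to the limit gives $\Lambda_2\in \mathrm{Crit}(\mathfrak{F}_{q,\alpha})$. The legitimacy of this construction hinges on the set $\{\lambda\in\mathrm{Crit}(\mathfrak{F}_{q,\alpha}):\lambda>\Lambda_1\}$ being non-empty; I would secure this by a standard mountain pass argument for $\mathfrak{F}_{q,\alpha}$ between the two distinct global minima $w$ and $-w$, which is available because $\mathfrak{F}_{q,\alpha}$ is of class $C^1$ and satisfies the Palais-Smale condition by Lemma \ref{lm:PS}.

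\textbf{Main obstacle.} The only non-routine step is the regularity upgrade of the convergence $u_n\to w$ to a topology sensitive to the boundary behaviour: strong $W^{1,2}_0(\Omega)$-convergence alone is too weak to rule out loss of positivity along $\partial\Omega$, and this is precisely the point where the external input \cite[Theorem A]{BDF} and the $C^1$-smoothness of $\partial\Omega$ cannot be dispensed with. The remaining ingredients (the sign-change of non-minimizers, the convergence of minimizing sequences to $\pm w$, and the closedness of the spectrum) are already at our disposal in the preceding lemmas.
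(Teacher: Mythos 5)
Your overall architecture is the same as the paper's (contradiction plus Lemma \ref{lm:minseq} for part (1); closedness of $\mathrm{Crit}(\mathfrak{F}_{q,\alpha})$ plus non-emptiness for part (2)), and you correctly identify \cite[Theorem A]{BDF} as the decisive external input. But the way you propose to \emph{use} that input contains a genuine gap, and in part (2) you overshoot the mark.

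On part (1): the paper does not attempt to upgrade $u_n \to w$ from $W^{1,2}_0(\Omega)$ to $C^1(\overline{\Omega})$ and then invoke Hopf's lemma. It simply cites \cite[Theorem A]{BDF} as stating that $w$ and $-w$ are \emph{isolated critical points with respect to the strong $L^1(\Omega)$ topology}. Since $u_n \to w$ in $W^{1,2}_0(\Omega)$, hence in $L^1(\Omega)$, isolation forces $u_n = w$ for $n$ large, contradicting $\mathfrak{F}_{q,\alpha}(u_n) > \Lambda_1 = \mathfrak{F}_{q,\alpha}(w)$. Your proposal instead tries to \emph{re-derive} isolation via ``uniform elliptic estimates $\Rightarrow$ $C^1(\overline{\Omega})$-convergence $\Rightarrow$ Hopf's lemma $\Rightarrow$ $u_n>0$.'' This route is exactly the naïve one that fails under the stated hypotheses: with a merely $C^1$ boundary you do not automatically get uniform $C^{1,\alpha}(\overline{\Omega})$ estimates (which need $C^{1,\alpha}$ boundary regularity), nor is Hopf's boundary point lemma available (it requires an interior sphere condition or $C^{1,\alpha}$ smoothness). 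Moreover, the nonlinearity $s\mapsto|s|^{q-2}s$ with $1<q<2$ is not Lipschitz at $0$, so the linearization around $w$ degenerates precisely near $\partial\Omega$, where $w$ vanishes; this is why the isolation statement is nontrivial and why the paper treats \cite[Theorem A]{BDF} as a black box rather than re-proving it. Your Proposition \ref{prop:constantsign} step (showing each $u_n$ is sign-changing) is correct but not needed in the paper's shorter argument.

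On part (2): your deduction from closedness of $\mathrm{Crit}(\mathfrak{F}_{q,\alpha})$ matches the paper, but for the non-emptiness of $\{\lambda\in\mathrm{Crit}(\mathfrak{F}_{q,\alpha}):\lambda>\Lambda_1\}$ the paper simply observes that $\Lambda_1<0$ while $0\in\mathrm{Crit}(\mathfrak{F}_{q,\alpha})$ (coming from the trivial critical point $u\equiv 0$). Your mountain-pass argument would also work but is considerably heavier than needed.
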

\begin{proof}
We first observe that
\[
\Big\{\lambda\in\mathrm{Crit}(\mathfrak{F}_{q,\alpha})\, :\, \lambda>\Lambda_1\Big\}\not=\emptyset,
\]
since $\Lambda_1<0$ and $0$ is a critical value of $\mathfrak{F}_{q,\alpha}$, corresponding to the trivial critical point $u\equiv 0$.
\par
We argue by contradiction and assume that $\Lambda_2=\Lambda_1$. This implies that there is a sequence $\{\lambda_n\}_{n\in\mathbb{N}}$ of critical values, such that
\[
\lambda_n>\Lambda_1\ \mbox{ for every }n\in\mathbb{N}\qquad \mbox{ and }\qquad \lim_{n\to\infty} \lambda_n=\Lambda_1.
\]
Let $\{u_n\}_{n\in\mathbb{N}}\subset W^{1,2}_0(\Omega)$ be a corresponding sequence of critical points, i.e. weak solutions of \eqref{LE}. We thus have
\[
\Lambda_1=\lim_{n\to\infty}\lambda_n=\mathfrak{F}_{q,\alpha}(u_n).
\]
By Lemma \ref{lm:minseq}, we get that $\{u_n\}_{n\in\mathbb{N}}$ converges strongly in $W^{1,2}_0(\Omega)$ either to $w$ or to $-w$. However, under the standing assumptions on $\Omega$, we know that $\{w,\,-w\}$ are isolated critical points for $\mathfrak{F}_{q,\alpha}$ with respect to the strong $L^1(\Omega)$ topology, thanks to \cite[Theorem A]{BDF}. Thus we reach the desired contradiction.
\vskip.2cm\noindent
We now show that $\Lambda_2$ is a critical value.
We know by Lemma \ref{lm:closed} that $\mathrm{Crit}(\mathfrak{F}_{q,\alpha})$ is a closed subset of $\mathbb{R}$. This is enough to conclude that the infimum in the definition of $\Lambda_2$ must actually be a minimum.
\end{proof}
\begin{oss}
\label{oss:BDF}
It is precisely in the previous result that the smoothness assumption on $\partial\Omega$ enters. This is needed to apply \cite[Theorem A]{BDF}. However, as pointed out in \cite{BDF}, such a result would also allow for Lipschitz sets: in this case, it may happen that the result is only valid in a restricted range
\[
q_\Omega<q<2,
\]
with $q_\Omega\ge 1$ depending on the Lipschitz constant of $\partial\Omega$. We refer the reader to \cite[Remark 1.2]{BDF} for a thorough discussion.
\par
We also point out that the previous result still holds if $\Omega$ has a finite number connected components, while for a set with countably infinite connected components it holds $\Lambda_2=\Lambda_1$.
\end{oss}
We conclude this section by showing that above the minimal value of $\mathfrak{F}_{q,\alpha}$, we can always find sign-changing functions. This is quite straightforward, we include it to stress that the statement of Theorem \ref{teo:main} permits indeed to take sign-changing initial data.
\begin{lm}
\label{lm:nontrivial}
Let $1<q<2$ and $\alpha>0$. Let $\Omega\subset\mathbb{R}^N$ be an open bounded set with a finite number of connected components,  such that
\[
\Lambda_2>\Lambda_1.
\]
Then there exists infinitely many sign-changing functions $\varphi\in W^{1,2}_0(\Omega)$ such that
\[
\Lambda_1<\mathfrak{F}_{q,\alpha}(\varphi)< \Lambda_2,
\]
and
\[
\mathfrak{F}_{q,\alpha}\left(\varphi^-\right)\ge 0.
\]
\end{lm}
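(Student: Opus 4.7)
\emph{Plan.} I would construct $\varphi$ as $w_n-\psi_{\lambda,r}$, where $w_n$ is a nonnegative modification of the positive ground state $w$ that vanishes on a small ball around a fixed interior point $x_0\in\Omega$, and $\psi_{\lambda,r}\ge 0$ is a ``spike'' supported strictly inside that same ball. Since $w_n$ and $\psi_{\lambda,r}$ have disjoint supports, a direct check gives $\varphi^+=w_n$ and $\varphi^-=-\psi_{\lambda,r}$, together with the additive energy decomposition
\[
\mathfrak{F}_{q,\alpha}(\varphi)=\mathfrak{F}_{q,\alpha}(w_n)+\mathfrak{F}_{q,\alpha}(\psi_{\lambda,r}),\qquad \mathfrak{F}_{q,\alpha}(\varphi^-)=\mathfrak{F}_{q,\alpha}(\psi_{\lambda,r}).
\]
All it will take is to arrange each summand in the right energy range.

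\emph{Spike ingredient.} Fix $\rho\in C^\infty_0(B_1)$ with $\rho\ge 0$, $\rho\not\equiv 0$, and set $\psi_{\lambda,r}(x):=\lambda\,\rho((x-x_0)/r)$. A straightforward scaling computation yields
\[
\mathfrak{F}_{q,\alpha}(\psi_{\lambda,r})=\frac{\lambda^2\,r^{N-2}}{2}\,\|\nabla\rho\|_{L^2}^2-\frac{\alpha\,\lambda^q\,r^N}{q}\,\|\rho\|_{L^q}^q.
\]
Since $q<2$, as a function of $\lambda>0$ this expression vanishes at a unique $\lambda_0(r)>0$ and is strictly positive for $\lambda>\lambda_0(r)$. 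By continuity in $\lambda$, for any prescribed $\eta>0$ one may pick $\lambda>\lambda_0(r)$ so that $\mathfrak{F}_{q,\alpha}(\psi_{\lambda,r})\in(0,\eta)$.

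\emph{Truncated ground state.} In dimension $N\ge 2$ the set $\{x_0\}$ has zero $W^{1,2}$-capacity, hence there exists a sequence of cutoffs $\zeta_n\in C^\infty(\mathbb{R}^N)$ with $\zeta_n\equiv 0$ on a neighborhood of $x_0$, $\zeta_n\equiv 1$ outside $B_{1/n}(x_0)$, and $\|\nabla\zeta_n\|_{L^2}\to 0$ (the logarithmic cutoff for $N=2$, a piecewise linear one for $N\ge 3$; together with $w\in L^\infty(\Omega)$ from Proposition \ref{prop:min}, this controls $\|w\,\nabla\zeta_n\|_{L^2}$). Setting $w_n:=\zeta_n\,w$, one gets $w_n\in W^{1,2}_0(\Omega)$, $w_n\ge 0$, $w_n\equiv 0$ on $B_{1/n}(x_0)$, and $w_n\to w$ strongly in $W^{1,2}_0(\Omega)$, so $\mathfrak{F}_{q,\alpha}(w_n)\to\Lambda_1$. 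By the uniqueness of positive minimizers from Proposition \ref{prop:min} (extended componentwise via Remark \ref{oss:disconnected}), $w_n\notin\{w,-w\}$, so $\mathfrak{F}_{q,\alpha}(w_n)>\Lambda_1$ strictly. In dimension $N=1$ the same scheme works after replacing the small ball around $x_0$ by a shrinking interval placed near an endpoint of a connected component of $\Omega$.

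\emph{Assembly and infinitude.} Pick $n$ large enough that $\mathfrak{F}_{q,\alpha}(w_n)\in\bigl(\Lambda_1,\,\Lambda_1+\tfrac{\Lambda_2-\Lambda_1}{2}\bigr)$, then $r$ so small that $\mathrm{supp}\,\psi_{\lambda,r}\subset B_{1/n}(x_0)$, and finally $\lambda$ as above with $\mathfrak{F}_{q,\alpha}(\psi_{\lambda,r})\in\bigl(0,\tfrac{\Lambda_2-\Lambda_1}{2}\bigr)$. The resulting $\varphi:=w_n-\psi_{\lambda,r}$ is visibly sign-changing and, by the splitting, satisfies $\Lambda_1<\mathfrak{F}_{q,\alpha}(\varphi)<\Lambda_2$ together with $\mathfrak{F}_{q,\alpha}(\varphi^-)\ge 0$. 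Varying $\lambda$ in an open subinterval around the chosen value (and/or $r$) produces infinitely many pairwise distinct $\varphi$'s. The only substantive technicality is the capacity-based cutoff in the ground state approximation (delicate in $N=2$, where the logarithmic cutoff is needed); the rest is pure scaling and continuity.
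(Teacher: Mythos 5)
Your proof is correct in its essentials and shares the paper's overall architecture: both arguments build $\varphi$ as (a nonnegative function with energy slightly above $\Lambda_1$ that vanishes on a small ball) minus (a disjointly supported nonnegative spike whose energy can be tuned into $[0,\eta)$), and both then conclude via the locality of $\mathfrak{F}_{q,\alpha}$ and the additive energy splitting. Where you genuinely diverge is in the first ingredient. The paper works with the inner sets $\Omega_\varepsilon=\{x\,:\,\mathrm{dist}(x,\partial\Omega)>\varepsilon\}$, proves $\Lambda_1(\varepsilon)\to\Lambda_1$ by a density argument, and uses the positive minimizer $w_{\widetilde\varepsilon}$ of $\mathfrak{F}_{q,\alpha}$ on $W^{1,2}_0(\Omega_{\widetilde\varepsilon})$, which automatically vanishes on a ball in the boundary collar; this is dimension-independent and needs no capacity considerations. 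You instead truncate the actual ground state $w$ near an interior point via capacity-zero cutoffs; this is arguably more direct for $N\ge 2$ (and gives strong $W^{1,2}_0$ convergence $w_n\to w$, hence $\mathfrak{F}_{q,\alpha}(w_n)\to\Lambda_1$, with strict inequality $\mathfrak{F}_{q,\alpha}(w_n)>\Lambda_1$ because $w_n$ is not one of the $2^k$ minimizers of Remark \ref{oss:disconnected}), but it buys you the $N=2$ logarithmic cutoff and a separate argument for $N=1$. Also note that the paper's spike $\psi_r(x)=r\,\psi((x-x_0)/r)$ has energy tending to $0^+$ automatically since $r^{q+N}=o(r^N)$, whereas your two-parameter spike needs the extra tuning of $\lambda$ past the zero $\lambda_0(r)$; both work.

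Two small points to tighten. First, you require $\mathrm{supp}\,\psi_{\lambda,r}\subset B_{1/n}(x_0)$, but what you actually need is that the support lies in the set where $\zeta_n\equiv 0$ (on the annulus where $0<\zeta_n<1$ the function $w_n$ need not vanish); this is harmless, just take $r$ smaller than the radius of that neighborhood. Second, the $N=1$ case is only asserted: the linear cutoff near an endpoint $a$ has $\|\zeta_n'\|_{L^2}^2\sim n\to\infty$, so you must use that $w(x)\le (x-a)^{1/2}\,\|w'\|_{L^2(a,x)}$ to get $\|w\,\zeta_n'\|_{L^2}^2\le C\,\|w'\|_{L^2(a,a+1/n)}^2\to 0$ by absolute continuity of the integral; this does close the gap, but it should be written out, and it is precisely the kind of case distinction the paper's $\Omega_\varepsilon$ construction avoids.
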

\begin{proof}
For every $\varepsilon>0$ small enough, we consider the sets $\Omega_\varepsilon=\{x\in\Omega\, :\, \mathrm{dist}(x,\partial\Omega)>\varepsilon\}$. We then claim that
\[
\Lambda_1(\varepsilon)=\min_{\varphi\in W^{1,2}_0(\Omega_\varepsilon)} \mathfrak{F}_{q,\alpha}(\varphi),
\]
is such that
\begin{equation}
\label{convergelambda}
\lim_{\varepsilon\to 0^+} \Lambda_1(\varepsilon)=\Lambda_1.
\end{equation}
Indeed, since $\Omega_\varepsilon\subset\Omega$, we can view $W^{1,2}_0(\Omega_\varepsilon)$ as a subspace of $W^{1,2}_0(\Omega)$, by extending its elements by $0$ outside. Thus we get
\[
\Lambda_1(\varepsilon)\ge \Lambda_1\quad \mbox{ which implies }\quad \liminf_{\varepsilon\to 0^+} \Lambda_1(\varepsilon)\ge \Lambda_1.
\]
On the other hand, by density of $C^\infty_0(\Omega)$ in $W^{1,2}_0(\Omega)$, for every $\delta>0$ there exists $\varphi_\delta\in C^\infty_0(\Omega)$ such that
\[
\mathfrak{F}_{q,\alpha}(\varphi_\delta)<\Lambda_1+\delta.
\]
Since $\varphi_\delta$ has compact support in $\Omega$, there exists $\varepsilon_0$ such that $\varphi_\delta\in C^\infty_0(\Omega_\varepsilon)$ for every $0<\varepsilon\le \varepsilon_0$.  This implies that
\[
\Lambda_1(\varepsilon)\le \mathfrak{F}_{q,\alpha}(\varphi_\delta)<\Lambda_1+\delta,\qquad \mbox{ for every } 0<\varepsilon\le \varepsilon_0,
\]
and thus
\[
\limsup_{\varepsilon\to 0^+} \Lambda_1(\varepsilon)\le \Lambda_1+\delta.
\]
By arbitrariness of $\delta>0$, we finally obtain \eqref{convergelambda}, as claimed.\par
In light of \eqref{convergelambda}, the assumption $\Lambda_2>\Lambda_1$ entails that we can choose $\widetilde\varepsilon>0$ such that
\[
\Lambda_1(\widetilde\varepsilon)<\Lambda_2.
\]
Accordingly, by taking $w_{\widetilde{\varepsilon}}$ the positive minimizer of $\mathfrak{F}_{q,\alpha}$ over $W^{1,2}_0(\Omega_{\widetilde\varepsilon})$, we have
\[
\mathfrak{F}_{q,\alpha}(w_{\widetilde{\varepsilon}})<\Lambda_2.
\]
We observe that, by construction, there exists a ball $B_R(x_0)\subset\Omega$ such that
\[
w_{\widetilde{\varepsilon}}=0\qquad \mbox{ a.\,e. on }B_R(x_0).
\]
We now pick a nonnegative function $\psi\in C^\infty_0(B_1(0))\setminus\{0\}$ and define for $0<r<R$
\[
\psi_r(x)=r\,\psi\left(\frac{x-x_0}{r}\right).
\]
Its energy is given by
\begin{equation}
\label{energiapn}
\mathfrak{F}_{q,\alpha}(\psi_r)=\frac{r^N}{2}\,\int_{B_1(0)} |\nabla \psi|^2\,dx-\frac{\alpha\,r^{q+N}}{q}\,\int_{B_1(0)} |\psi|^q\,dx,
\end{equation}
so that in particular
\[
\lim_{r\to 0^+} \mathfrak{F}_{q,\alpha}(\psi_r)=0.
\]
We can then choose $0<r_0<R$ such that
\[
\mathfrak{F}_{q,\alpha}(\psi_{r_0})<\Lambda_2-\mathfrak{F}_{q,\alpha}(w_{\widetilde{\varepsilon}}),
\]
the latter being a positive quantity, as already said. If we now define
\[
\varphi=w_{\widetilde{\varepsilon}}-\psi_{r_0},
\]
use that the two functions have disjoint supports and the locality of the functional $\mathfrak{F}_{q,\alpha}$, we get the desired conclusion
\[
\Lambda_1<\mathfrak{F}_{q,\alpha}(\varphi)< \Lambda_2.
\]
Observe that by construction $\varphi^-=\psi_{r_0}$, thus by \eqref{energiapn} and by using that $r^{q+N}=o(r^N)$ as $r$ goes to $0$, we can always suppose that
\[
\mathfrak{F}_{q,\alpha}(\varphi^-)=\mathfrak{F}_{q,\alpha}(\psi_{r_0})\ge 0,
\]
up to further refine the choice of $r_0<R$. This concludes the proof.
\end{proof}

\section{The mountain pass level}
\label{sec:4}
We start this section with a simple result, useful to show that the functional $\mathfrak{F}_{q,\alpha}$ enjoys a \emph{mountain pass} structure.
\begin{lm}
\label{lm:valli}
Let $1<q<2$ and $\alpha>0$. Let $\Omega\subset\mathbb{R}^N$ be an open bounded connected set. We define
\[
0<\ell:=\|w-(-w)\|_{W^{1,2}_0(\Omega)}=2\,\|w\|_{W^{1,2}_0(\Omega)},
\]
then there exists a constant $C>\Lambda_1$
such that
\[
\mathfrak{F}_{q,\alpha}(\varphi)\ge C,\qquad \mbox{ for every } \varphi\in W^{1,2}_0(\Omega) \mbox{ such that } \|\varphi-w\|_{W^{1,2}_0(\Omega)}=\frac{\ell}{2}.
\]
\end{lm}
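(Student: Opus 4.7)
The plan is to argue by contradiction, using the characterization of minimizing sequences for $\mathfrak{F}_{q,\alpha}$ already established in Lemma \ref{lm:minseq}. The sphere
\[
S:=\Big\{\varphi\in W^{1,2}_0(\Omega)\, :\, \|\varphi-w\|_{W^{1,2}_0(\Omega)}=\tfrac{\ell}{2}\Big\}
\]
separates $w$ (at distance $0$) from $-w$ (at distance $\ell$), so any minimizing sequence on $S$ is \emph{prevented} from concentrating at either of the two global minimizers, forcing the infimum of $\mathfrak{F}_{q,\alpha}$ on $S$ to be strictly above $\Lambda_1$.

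First I would define $C:=\inf_{\varphi\in S}\mathfrak{F}_{q,\alpha}(\varphi)$, observe that $\ell>0$ because $w$ is a nontrivial positive minimizer by Proposition \ref{prop:min} (so in particular $S\neq\emptyset$), and note the trivial bound $C\ge \Lambda_1$. Then I would suppose by contradiction that $C=\Lambda_1$ and pick a minimizing sequence $\{\varphi_n\}_{n\in\mathbb{N}}\subset S$ with $\mathfrak{F}_{q,\alpha}(\varphi_n)\to\Lambda_1$. Since $\{\varphi_n\}_{n\in\mathbb{N}}$ is, in particular, a minimizing sequence for $\mathfrak{F}_{q,\alpha}$ over the whole of $W^{1,2}_0(\Omega)$, Lemma \ref{lm:minseq} applies and provides a subsequence (still denoted by $\{\varphi_n\}_{n\in\mathbb{N}}$) which converges strongly in $W^{1,2}_0(\Omega)$ either to $w$ or to $-w$.

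The key step is now to rule out both alternatives by using the constraint $\|\varphi_n-w\|_{W^{1,2}_0(\Omega)}=\ell/2$. If $\varphi_n\to w$ strongly in $W^{1,2}_0(\Omega)$, passing to the limit in this identity yields $0=\ell/2$, contradicting $\ell>0$. If instead $\varphi_n\to-w$ strongly in $W^{1,2}_0(\Omega)$, then
\[
\tfrac{\ell}{2}=\lim_{n\to\infty}\|\varphi_n-w\|_{W^{1,2}_0(\Omega)}=\|-w-w\|_{W^{1,2}_0(\Omega)}=\ell,
\]
again a contradiction. Hence $C>\Lambda_1$, and with this value of $C$ the stated inequality on $S$ holds by definition of infimum.

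There is no real obstacle here: the argument is entirely by direct appeal to Lemma \ref{lm:minseq}, and the two possibilities singled out by that lemma are exactly the two "valleys" that the sphere of radius $\ell/2$ around $w$ is designed to avoid. The only mild point worth emphasizing is that the strong $W^{1,2}_0(\Omega)$ convergence furnished by Lemma \ref{lm:minseq} (rather than merely weak convergence) is essential for passing to the limit in the norm constraint defining $S$.
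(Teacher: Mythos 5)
Your proof is correct and coincides with the paper's argument: both argue by contradiction, invoke Lemma \ref{lm:minseq} to extract a subsequence converging strongly in $W^{1,2}_0(\Omega)$ to $w$ or to $-w$, and observe that either limit is incompatible with the constraint $\|\varphi_n-w\|_{W^{1,2}_0(\Omega)}=\ell/2$. Your remark that the \emph{strong} convergence furnished by Lemma \ref{lm:minseq} is what allows passage to the limit in the norm constraint is exactly the point implicitly used in the paper.
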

\begin{proof}
We argue by contradiction and assume that there exists a sequence $\{\varphi_n\}_{n\in\mathbb{N}}\subset W^{1,2}_0(\Omega)$ such that
\begin{equation}
\label{amezzo}
\|\varphi_n-w\|_{W^{1,2}_0(\Omega)}=\frac{\ell}{2},\qquad \mbox{ for every } n\in\mathbb{N}
\end{equation}
and
\[
\lim_{n\to\infty}\mathfrak{F}_{q,\alpha}(\varphi_n)=\Lambda_1.
\]
By Lemma \ref{lm:minseq}, we know that
\[
\mbox{ either } \lim_{n\to\infty} \|\varphi_n-w\|_{W^{1,2}_0(\Omega)}=0\quad \mbox{ or }\quad \lim_{n\to\infty} \|\varphi_n-(-w)\|_{W^{1,2}_0(\Omega)}=0,
\]
up to a subsequence. Both possibilities contradict \eqref{amezzo} and thus the claim follows.
\end{proof}
We can now prove the main result of this section.
\begin{teo}[The mountain pass level]
\label{teo:MP}
Let $1<q<2$ and $\alpha>0$. Let $\Omega\subset\mathbb{R}^N$ be an open bounded connected set. If we set
\[
\Gamma=\Big\{\gamma\in C([0,1];W^{1,2}_0(\Omega))\, :\, \gamma(0)=w,\, \gamma(1)=-w\Big\},
\]
then the value
\begin{equation}
\label{PM}
\Lambda^*:=\inf_{\gamma\in \Gamma}\max_{\varphi\in \mathrm{Im}(\gamma)} \mathfrak{F}_{q,\alpha}(\varphi)
\end{equation}
is a critical value of $\mathfrak{F}_{q,\alpha}$. Moreover, if we set\footnote{By the symbol $\gamma(+\infty)=-w$ we intend that
\[
\lim_{t\to+\infty}\|\gamma(t)-(-w)\|_{L^q(\Omega)}=0,
\]
while $\mathrm{Im}(\gamma)$ denotes the image of $\gamma$.}
\[
\widetilde\Gamma=\Big\{\gamma\in C([0,1];L^q(\Omega))\, :\, \mathrm{Im}(\gamma)\subset W^{1,2}_0(\Omega),\,\gamma(0)=w,\, \gamma(1)=-w\Big\},
\]
\[
\widetilde\Gamma_\infty=\Big\{\gamma\in C([0,+\infty);L^q(\Omega))\, :\, \mathrm{Im}(\gamma)\subset W^{1,2}_0(\Omega),\,\gamma(0)=w,\, \gamma(+\infty)=-w\Big\},
\]
and
\[
\mu^*:=\inf_{\gamma\in \widetilde\Gamma}\sup_{\varphi\in \mathrm{Im}(\gamma)} \mathfrak{F}_{q,\alpha}(\varphi)\qquad \mbox{ and }\qquad \mu^*_\infty:=\inf_{\gamma\in \widetilde\Gamma_\infty}\sup_{\varphi\in \mathrm{Im}(\gamma)} \mathfrak{F}_{q,\alpha}(\varphi),
\]
then
\[
\mu^*=\mu^*_\infty=\Lambda^*.
\]
\end{teo}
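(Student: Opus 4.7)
The plan is first to establish the mountain-pass geometry for $\mathfrak{F}_{q,\alpha}$ on the pair $\{w,-w\}$ and then to apply the classical Ambrosetti--Rabinowitz theorem to obtain the existence of a critical point at level $\Lambda^*$. Concretely, Proposition \ref{prop:min} gives $\mathfrak{F}_{q,\alpha}(w)=\mathfrak{F}_{q,\alpha}(-w)=\Lambda_1$, while Lemma \ref{lm:valli} produces a constant $C>\Lambda_1$ with $\mathfrak{F}_{q,\alpha}\ge C$ on the sphere
\[
S=\Big\{\varphi\in W^{1,2}_0(\Omega)\,:\,\|\varphi-w\|_{W^{1,2}_0(\Omega)}=\ell/2\Big\}.
\]
Since $w$ and $-w$ sit at $W^{1,2}_0$-distance $0$ and $\ell$ from $w$, respectively, the intermediate-value theorem applied to $t\mapsto\|\gamma(t)-w\|_{W^{1,2}_0(\Omega)}$ forces any $\gamma\in\Gamma$ to meet $S$, so $\Lambda^*\ge C>\Lambda_1$. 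Combining this with the $C^1$ regularity of $\mathfrak{F}_{q,\alpha}$ and the Palais--Smale condition from Lemma \ref{lm:PS}, the Mountain Pass Theorem yields that $\Lambda^*$ is a critical value.

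The inequality $\mu^*\le \Lambda^*$ is essentially tautological: by \eqref{sobolevpoincare}, strong $W^{1,2}_0$-continuity implies strong $L^q$-continuity, so $\Gamma\subset\widetilde\Gamma$, and for $\gamma\in\Gamma$ the composition $\mathfrak{F}_{q,\alpha}\circ\gamma$ is continuous on the compact interval $[0,1]$, turning the supremum into a maximum. The crux of the argument is therefore the reverse inequality $\Lambda^*\le \mu^*$, which I would prove by piecewise-linear approximation. Fix $\gamma\in\widetilde\Gamma$ with finite $M:=\sup_{\mathrm{Im}(\gamma)}\mathfrak{F}_{q,\alpha}$; the coercivity bound \eqref{coercive} shows that $\gamma([0,1])$ is bounded in $W^{1,2}_0(\Omega)$ by some $R_1$, hence in $L^q(\Omega)$ by some $R_2$. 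For a partition $0=t_0<t_1<\dots<t_n=1$, I define
\[
\widetilde\gamma(s)=(1-\tau)\,\gamma(t_{i-1})+\tau\,\gamma(t_i),\qquad s\in[t_{i-1},t_i],\quad \tau=\tfrac{s-t_{i-1}}{t_i-t_{i-1}},
\]
so that $\widetilde\gamma\in C([0,1];W^{1,2}_0(\Omega))$ with $\widetilde\gamma(0)=w$, $\widetilde\gamma(1)=-w$, i.e. $\widetilde\gamma\in\Gamma$.

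To estimate the energy along $\widetilde\gamma$, I would use convexity of $\varphi\mapsto\int_\Omega|\nabla\varphi|^2\,dx$ to bound the gradient part of $\mathfrak{F}_{q,\alpha}(\widetilde\gamma(s))$ by the convex combination at the endpoints, and then apply the elementary inequality
\[
\Big|\|f\|_{L^q(\Omega)}^q-\|g\|_{L^q(\Omega)}^q\Big|\le C(R_2,q)\,\|f-g\|_{L^q(\Omega)},
\]
valid for $L^q$-norms bounded by $R_2$ (it follows from $|a^q-b^q|\le q\,\max(a,b)^{q-1}\,|a-b|$ and H\"older), to control the discrepancy between the mean of the potentials at the endpoints and the potential at $\widetilde\gamma(s)$. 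Using $\|\widetilde\gamma(s)-\gamma(t_{i-1})\|_{L^q(\Omega)}$ and $\|\widetilde\gamma(s)-\gamma(t_i)\|_{L^q(\Omega)}$, both dominated by $\|\gamma(t_i)-\gamma(t_{i-1})\|_{L^q(\Omega)}$, I would conclude
\[
\mathfrak{F}_{q,\alpha}(\widetilde\gamma(s))\le M+C(R_2,q)\,\omega_\gamma\big(|t_i-t_{i-1}|\big),
\]
where $\omega_\gamma$ denotes the uniform $L^q$-modulus of continuity of $\gamma$ on $[0,1]$. Taking the mesh small enough, $\sup_{\widetilde\gamma}\mathfrak{F}_{q,\alpha}$ gets arbitrarily close to $M$, so $\Lambda^*\le M$; taking the infimum over $\gamma\in\widetilde\Gamma$ yields $\Lambda^*\le\mu^*$.

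For the equality $\mu^*_\infty=\mu^*$ I would reparametrize via the homeomorphism $\phi(t)=1-e^{-t}$ from $[0,+\infty)$ onto $[0,1)$. Given $\gamma\in\widetilde\Gamma$, the path $t\mapsto\gamma(\phi(t))$ lies in $\widetilde\Gamma_\infty$ with the same image up to the single point $-w=\gamma(1)$, which attains the global minimum of $\mathfrak{F}_{q,\alpha}$ and therefore cannot affect the supremum; conversely, $s\mapsto\gamma(\phi^{-1}(s))$ with the value $-w$ attached at $s=1$ turns a path in $\widetilde\Gamma_\infty$ into one in $\widetilde\Gamma$ (the continuity at $s=1$ uses precisely the asymptotic condition defining $\widetilde\Gamma_\infty$) without changing the supremum. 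I expect the main obstacle to be the quantitative piecewise-linear step: one needs to combine convexity of the Dirichlet integral with a Lipschitz-type estimate for the $L^q$-potential so that the error depends only on the uniform bounds $R_1,R_2$ and on the $L^q$-modulus of $\gamma$, so that the error vanishes uniformly as the partition is refined. Everything else reduces to standard mountain-pass and reparametrization arguments.
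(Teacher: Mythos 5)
Your proposal is correct, and for the first two assertions it follows essentially the paper's route: the mountain-pass geometry from Lemma \ref{lm:valli} combined with the Palais--Smale condition of Lemma \ref{lm:PS} gives that $\Lambda^*$ is critical, and the equality $\mu^*=\Lambda^*$ is obtained by piecewise affine interpolation of a near-optimal $L^q$-continuous path, with convexity of the Dirichlet integral handling the gradient term and a Lipschitz-type estimate for $\varphi\mapsto\int_\Omega|\varphi|^q\,dx$ on $L^q$-bounded sets (supplied by \eqref{coercive}) handling the potential term. The only cosmetic difference there is that the paper controls the term $\int_\Omega|(1-\tau)\gamma(t_i)+\tau\gamma(t_{i+1})|^q\,dx$ via pointwise convexity of $|\cdot|^q$ followed by H\"older, while you apply the norm-level inequality $\bigl|\,\|f\|_{L^q}^q-\|g\|_{L^q}^q\bigr|\le q\,R^{q-1}\,\|f-g\|_{L^q}$ twice; both yield the same $O(\omega_\gamma(\mathrm{mesh}))$ error. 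Where you genuinely deviate is the step $\mu^*_\infty=\mu^*$: the paper truncates a path of $\widetilde\Gamma_\infty$ at a large finite time $M$ where $\gamma(M)$ is $\delta$-close to $-w$ and appends a linear segment, which forces it to rerun the interpolation estimate and pay an extra $C\,\delta$ loss. Your reparametrization $\phi(t)=1-e^{-t}$, with the value $-w$ attached at $s=1$ (continuity there being exactly the asymptotic condition defining $\widetilde\Gamma_\infty$), produces an element of $\widetilde\Gamma$ whose image differs from that of the original path only by the point $-w$; since $-w$ realizes the global minimum $\Lambda_1$, the supremum of the energy is unchanged. This is an exact, loss-free correspondence and is cleaner than the paper's argument; it is worth spelling out that last observation about $-w$ when you write it up.
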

\begin{proof}
We divide the proof in three parts: we first prove that $\Lambda^*$ defined by \eqref{PM} is a critical value for $\mathfrak{F}_{q,\alpha}$. Then we prove separately that
\[
\mu^*=\Lambda^*\qquad \mbox{ and }\qquad \mu^*_\infty=\mu^*.
\]
{\bf Part 1: $\Lambda^*$ is critical}.
We have already observed that $\mathfrak{F}_{q,\alpha}$ is a $C^1$ functional which satisfies the Palais-Smale condition, see Lemma \ref{lm:PS}. Moreover, Lemma \ref{lm:valli} guarantees that $\mathfrak{F}_{q,\alpha}$ has a {\it mountain pass structure}, that is we have:
\begin{itemize}
\item $\mathfrak{F}_{q,\alpha}(w)=\Lambda_1$;
\vskip.2cm
\item $\mathfrak{F}_{q,\alpha}(\varphi)\ge C>\Lambda_1$, for every $\varphi\in W^{1,2}_0(\Omega)$ such that $\|\varphi-w\|_{W^{1,2}_0(\Omega)}=\ell/2$;
\vskip.2cm
\item for $-w$ we have
\[
\|(-w)-w\|_{W^{1,2}_0(\Omega)}>\frac{\ell}{2}\qquad \mbox{ and }\qquad \mathfrak{F}_{q,\alpha}(-w)=\Lambda_1<C.
\]
\end{itemize}
It is then sufficient to apply \cite[Chapter II, Theorem 6.1]{St}, we leave the details to the reader.
\vskip.2cm\noindent
{\bf Part 2:} $\mu^*=\Lambda^*$.
It is easily seen that $\Gamma\subset \widetilde\Gamma$, thus it is immediate to get
\[
\mu^*\le \Lambda^*.
\]
In order to prove the reverse inequality, for every $\varepsilon>0$ we take $\gamma_\varepsilon\in \widetilde\Gamma$ such that
\begin{equation}
\label{energiapath}
\mu^*+\varepsilon>\sup_{\varphi\in \mathrm{Im}(\gamma_\varepsilon)} \mathfrak{F}_{q,\alpha}(\varphi).
\end{equation}
We observe that $\gamma_\varepsilon$ is uniformly continuous on $[0,1]$.
Thus, if we fix $\delta>0$, by uniform continuity there exists $\eta>0$ such that if $|t-s|<\eta$, we have
\[
\|\gamma_\varepsilon(t)-\gamma_\varepsilon(s)\|_{L^q(\Omega)}<\delta.
\]
We take a partition $\{t_0,\dots,t_k\}$ of $[0,1]$ such that
\[
t_0=0,\qquad t_k=1,\qquad |t_i-t_{i+1}|<\eta, \mbox{ for every } i=0,\dots,k-1,
\]
then we define the new curve $\theta_\varepsilon:[0,1]\to W^{1,2}_0(\Omega)$, which is given by the piecewise affine interpolation of the points $\gamma_\varepsilon(t_0),\gamma_\varepsilon(t_1),\dots,\gamma_\varepsilon(t_k)$. More precisely, we have
\[
\theta_\varepsilon(t)=\left(1-\frac{t-t_i}{t_{i+1}-t_i}\right)\,\gamma_\varepsilon(t_i)+\frac{t-t_i}{t_{i+1}-t_i}\,\gamma_\varepsilon(t_{i+1}),\qquad \mbox{ for every } t\in[t_i,t_{i+1}].
\]
Observe that by construction, we have
\[
\theta_\varepsilon\in C([0,1];W^{1,2}_0(\Omega)),
\]
thus this curve is admissible for the variational formulation of the mountain pass level $\Lambda^*$. In order to conclude, we need to estimate
\[
\max_{\varphi\in\mathrm{Im}(\theta_\varepsilon)} \mathfrak{F}_{q,\alpha}(\varphi).
\]
We estimate the energy of the path on each interval $[t_i,t_{i+1}]$: for simplicity, we set
\[
\tau=\frac{t-t_i}{t_{i+1}-t_i}.
\]
By using the definition of $\mathfrak{F}_{q,\alpha}$ and the convexity of the Dirichlet integral, we get for every $t\in[t_i,t_{i+1}]$
\[
\begin{split}
\mathfrak{F}_{q,\alpha}(\theta_\varepsilon(t))&=\frac{1}{2}\,\int_\Omega |\nabla ((1-\tau)\,\gamma_\varepsilon(t_i)+\tau\,\gamma_\varepsilon(t_{i+1}))|^2\,dx -\frac{\alpha}{q}\,\int_\Omega |(1-\tau)\,\gamma_\varepsilon(t_i)+\tau\,\gamma_\varepsilon(t_{i+1})|^q\,dx\\
&\le (1-\tau)\,\frac{1}{2}\,\int_\Omega |\nabla \gamma_\varepsilon(t_i)|^2\,dx+\tau\,\frac{1}{2}\,\int_\Omega |\nabla \gamma_\varepsilon(t_{i+1})|^2\,dx\\
&-\frac{\alpha}{q}\,\int_\Omega |(1-\tau)\,\gamma_\varepsilon(t_i)+\tau\,\gamma_\varepsilon(t_{i+1})|^q\,dx\\
&=(1-\tau)\,\mathfrak{F}_{q,\alpha}(\gamma_\varepsilon(t_i))+\tau\,\mathfrak{F}_{q,\alpha}(\gamma_\varepsilon(t_{i+1}))\\
&+\frac{\alpha}{q}\,\left[(1-\tau)\,\int_\Omega |\gamma_\varepsilon(t_i)|^q\,dx+\tau\,\int_\Omega |\gamma_\varepsilon(t_{i+1})|^q\,dx\right]\\
&-\frac{\alpha}{q}\,\int_\Omega |(1-\tau)\,\gamma_\varepsilon(t_i)+\tau\,\gamma_\varepsilon(t_{i+1})|^q\,dx.
\end{split}
\]
We set for brevity
\[
\mathcal{R}_1=\left[(1-\tau)\,\int_\Omega |\gamma_\varepsilon(t_i)|^q\,dx+\tau\,\int_\Omega |\gamma_\varepsilon(t_{i+1})|^q\,dx\right]
\]
and
\[
\mathcal{R}_2=\int_\Omega |(1-\tau)\,\gamma_\varepsilon(t_i)+\tau\,\gamma_\varepsilon(t_{i+1})|^q\,dx,
\]
then by using \eqref{energiapath}, we obtain
\begin{equation}
\label{stimaR}
\mathfrak{F}_{q,\alpha}(\theta_\varepsilon(t))< \mu^*+\varepsilon+\frac{\alpha}{q}\,\Big(\mathcal{R}_1-\mathcal{R}_2\Big),\qquad \mbox{ for every } t\in[t_i,t_{i+1}].
\end{equation}
We now need to estimate the remainder terms $\mathcal{R}_1$ and $\mathcal{R}_2$. We rewrite them as follows
\[
\mathcal{R}_1=\int_\Omega |\gamma_\varepsilon(t_i)|^q\,dx+\tau\,\left(\int_\Omega |\gamma_\varepsilon(t_{i+1})|^q\,dx-\int_\Omega |\gamma_\varepsilon(t_i)|^q\,dx\right),
\]
and
\[
\mathcal{R}_2=\int_\Omega |\gamma_\varepsilon(t_i)+\tau\,(\gamma_\varepsilon(t_{i+1})-\gamma_\varepsilon(t_i))|^q\,dx.
\]
In order to estimate $\mathcal{R}_2$, we observe that by convexity of the map $\tau\mapsto |\tau|^q$, we have
\[
|\gamma_\varepsilon(t_i)+\tau\,(\gamma_\varepsilon(t_{i+1})-\gamma_\varepsilon(t_i))|^q\ge |\gamma_\varepsilon(t_i)|^q+q\,\tau\,|\gamma_\varepsilon(t_i)|^{q-2}\,\gamma_\varepsilon(t_i)\,(\gamma_\varepsilon(t_{i+1})-\gamma_\varepsilon(t_i)).
\]
By integrating this over $\Omega$, we get, recalling the choice of the width of the partition,
\[
\begin{split}
-\mathcal{R}_2&\le -\int_\Omega |\gamma_\varepsilon(t_i)|^q\,dx-q\,\tau\,\int_\Omega |\gamma_\varepsilon(t_i)|^{q-2}\,\gamma_\varepsilon(t_i)\,(\gamma_\varepsilon(t_{i+1})-\gamma_\varepsilon(t_i))\,dx\\
&\le-\int_\Omega |\gamma_\varepsilon(t_i)|^q\,dx+q\,\tau\,\int_\Omega |\gamma_\varepsilon(t_i)|^{q-1}\,|\gamma_\varepsilon(t_{i+1})-\gamma_\varepsilon(t_i)|\,dx\\
&\le -\int_\Omega |\gamma_\varepsilon(t_i)|^q\,dx+q\,\tau\,\left(\int_\Omega |\gamma_\varepsilon(t_i)|^q\,dx\right)^\frac{q-1}{q}\,\|\gamma_\varepsilon(t_{i+1})-\gamma_\varepsilon(t_i)\|_{L^q(\Omega)}\\
&\le-\int_\Omega |\gamma_\varepsilon(t_i)|^q\,dx+q\,\tau\,\left(\int_\Omega |\gamma_\varepsilon(t_i)|^q\,dx\right)^\frac{q-1}{q}\,\delta.
\end{split}
\]
We observe that, by using the Sobolev-Poincar\'e inequality \eqref{sobolevpoincare}, the coercivity estimate \eqref{coercive} and the assumption \eqref{energiapath}, we have
\begin{equation}
\label{norms}
\left(\int_\Omega |\gamma_\varepsilon(t_i)|^q\,dx\right)^\frac{q-1}{q}\le C,
\end{equation}
for some uniform constant $C>0$. Thus in conclusion we get
\[
-\mathcal{R}_2\le -\int_\Omega |\gamma_\varepsilon(t_i)|^q\,dx+C\,\delta,
\]
In order to estimate $\mathcal{R}_1$, we use the elementary inequality
\[
|a^q-b^q|\le q\,(a^{q-1}+b^{q-1})\,|a-b|,\qquad \mbox{ for every }a,b\ge 0,
\]
see inequality \eqref{egolo0}  below.
Then by using this with
\[
a=\|\gamma_\varepsilon(t_i)\|_{L^q(\Omega)}\qquad \mbox{ and }\qquad b=\|\gamma_\varepsilon(t_{i+1})\|_{L^q(\Omega)},
\]
we obtain
\[
\begin{split}
\mathcal{R}_1&\le \int_\Omega |\gamma_\varepsilon(t_i)|^q\,dx+\left|\int_\Omega |\gamma_\varepsilon(t_{i+1})|^q\,dx-\int_\Omega |\gamma_\varepsilon(t_i)|^q\,dx\right|\\
&\le \int_\Omega |\gamma_\varepsilon(t_i)|^q\,dx+ C\,\left(\|\gamma_\varepsilon(t_i)\|_{L^q(\Omega)}^{q-1}+\|\gamma_\varepsilon(t_{i+1})\|_{L^q(\Omega)}^{q-1}\right)\,\left|\|\gamma_\varepsilon(t_i)\|_{L^q(\Omega)}-\|\gamma_\varepsilon(t_{i+1})\|_{L^q(\Omega)}\right|\\
&\le \int_\Omega |\gamma_\varepsilon(t_i)|^q\,dx+C\,\left(\|\gamma_\varepsilon(t_i)\|_{L^q(\Omega)}^{q-1}+\|\gamma_\varepsilon(t_{i+1})\|_{L^q(\Omega)}^{q-1}\right)\,\|\gamma_\varepsilon(t_i)-\gamma_\varepsilon(t_{i+1})\|_{L^q(\Omega)}.
\end{split}
\]
By using again \eqref{norms} and the choice of the width of the partition, we obtain
\[
\mathcal{R}_1\le \int_\Omega |\gamma_\varepsilon(t_i)|^q\,dx+C\,\delta.
\]
In conclusion, we get
\[
\mathcal{R}_1-\mathcal{R}_2\le C\,\delta,
\]
for some constant $C>0$ independent of $\delta$. By using this in \eqref{stimaR}, we get
\[
\mathfrak{F}_{q,\alpha}(\theta_\varepsilon(t))< \mu^*+\varepsilon+C\,\delta,\qquad \mbox{ for every } t\in[t_i,t_{i+1}].
\]
Such an estimate finally holds for every $t\in [0,1]$, by arbitrariness of $i\in\{1,\dots,k-1\}$.
Therefore
\[
\Lambda^*\le \max_{\varphi\in\mathrm{Im}(\theta_\varepsilon)} \mathfrak{F}_{q,\alpha}(\varphi)< \mu^*+\varepsilon+C\,\delta.
\]
and by taking the limit as $\delta$ goes to $0$, we obtain
\[
\Lambda^*\le \mu^*+\varepsilon.
\]
By the arbitrariness of $\varepsilon>0$, we finally get that $\Lambda^*\le \mu^*$, as well.
\vskip.2cm\noindent
{\bf Part 3:} $\mu^*_\infty=\mu^*$.
We can identify each path $\gamma\in \widetilde\Gamma$ with the curve $\widetilde\gamma$ of $\widetilde\Gamma_\infty$ given by
\[
\widetilde\gamma(t)=\left\{\begin{array}{cc}
\gamma(t),& \mbox{ if } t\in[0,1],\\
-w,&\mbox{ if } t>1.
\end{array}
\right.
\]
Of course, we have
\[
\sup_{\varphi\in \mathrm{Im}(\gamma)} \mathfrak{F}_{q,\alpha}(\varphi)=\sup_{\varphi\in \mathrm{Im}(\widetilde\gamma)} \mathfrak{F}_{q,\alpha}(\varphi).
\]
Thus, through this identification, we can say that $\widetilde\Gamma\subset \widetilde\Gamma_\infty$ and the inequality
\[
\mu^*\ge \mu^*_\infty,
\]
follows. In order to prove the reverse inequality, for every $\varepsilon>0$ we take $\gamma_\varepsilon\in \widetilde\Gamma_\infty$ such that
\[
\mu^*_\infty+\varepsilon> \sup_{\varphi\in \mathrm{Im}(\gamma_\varepsilon)} \mathfrak{F}_{q,\alpha}(\varphi).
\]
We fix $\delta>0$, by assumption there exists $M>0$ such that
\[
\|\gamma_\varepsilon(M)-(-w)\|_{L^q(\Omega)}<\delta.
\]
We then build an element of $\widetilde\Gamma$ as follows: at first, we take
\[
\gamma^M_\varepsilon(t)=\left\{\begin{array}{ll}
\gamma_\varepsilon(t), & \mbox{ if } t\in [0,M],\\
(1-t+M)\,\gamma_\varepsilon(M)-(t-M)\,w,& \mbox{ if } t\in[M,M+1],
\end{array}
\right.
\]
then we rescale it, i.e. we define
\[
\theta_\varepsilon(t)=\gamma^M_\varepsilon\left((M+1)\,t\right),\qquad \mbox{ for every } t\in[0,1].
\]
By construction, we have $\theta_\varepsilon\in \widetilde\Gamma$. We have to estimate the energy on this path. We observe that for $t\in[0,M/(M+1)]$, we have
\[
\mathfrak{F}_{q,\alpha}(\theta_\varepsilon(t))\le \sup_{\varphi\in \mathrm{Im}(\gamma_\varepsilon)} \mathfrak{F}_{q,\alpha}(\varphi)<\mu^*_\infty+\varepsilon.
\]
For $t\in [M/(M+1),1]$, the curve $\theta_\varepsilon$ is just the linear interpolation between $\gamma_\varepsilon(M)$ and the endpoint $-w$. By proceeding as in {\bf Part 2}, one can easily get
\[
\mathfrak{F}_{q,\alpha}(\theta_\varepsilon(t))\le \mu^*_\infty+\varepsilon+C\,\delta,
\]
thanks to the choice of $M$. The previous estimates entail that
\[
\mu^*\le \sup_{\varphi\in\mathrm{Im}(\theta_\varepsilon)} \mathfrak{F}_{q,\alpha}(\varphi)\le \mu^*_\infty+\varepsilon+C\,\delta.
\]
By taking the limit as $\delta$ goes to $0$, we obtain
\[
\mu^*\le \mu^*_\infty+\varepsilon.
\]
Finally, by the arbitrariness of $\varepsilon>0$, we get the desired conclusion.
\end{proof}
The continuity properties of the paths entering in the definition of $\Lambda^*$ can be further relaxed. Indeed, continuity in the $L^1$ strong topology is still sufficient. This is the content of the following
\begin{coro}
\label{coro:weakest}
Let $1<q<2$ and $\alpha>0$. Let $\Omega\subset\mathbb{R}^N$ be an open bounded connected set. If we set
\[
\widetilde\Gamma^1_\infty=\Big\{\gamma\in C([0,+\infty);L^1(\Omega))\, :\, \mathrm{Im}(\gamma)\subset W^{1,2}_0(\Omega),\,\gamma(0)=w,\, \gamma(+\infty)=-w\Big\},
\]
and
\[
 \nu^*_\infty:=\inf_{\gamma\in \widetilde\Gamma^1_\infty}\sup_{\varphi\in \mathrm{Im}(\gamma)} \mathfrak{F}_{q,\alpha}(\varphi),
\]
then
\[
\nu^*_\infty=\Lambda^*.
\]
\end{coro}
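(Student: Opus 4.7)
The plan is to show the two inequalities separately, relying on Theorem \ref{teo:MP} which already establishes $\mu^*_\infty=\Lambda^*$.

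For the easy direction $\nu^*_\infty \le \Lambda^*$, I would observe that $\widetilde\Gamma_\infty \subset \widetilde\Gamma^1_\infty$. Indeed, since $\Omega$ is bounded and $q>1$, H\"older's inequality gives a continuous embedding $L^q(\Omega)\hookrightarrow L^1(\Omega)$, so any curve which is continuous with values in $L^q(\Omega)$ is a fortiori continuous in $L^1(\Omega)$, and the condition $\gamma(+\infty)=-w$ in $L^q$ implies the same in $L^1$. Taking the infimum over the (potentially) larger class $\widetilde\Gamma^1_\infty$ can only make things smaller, hence $\nu^*_\infty\le \mu^*_\infty=\Lambda^*$.

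The substance is the reverse inequality $\nu^*_\infty\ge \Lambda^*$, and the idea is to upgrade the $L^1$ continuity of a nearly optimal path to $L^q$ continuity via the interpolation inequality \eqref{gagliardo}. Fix $\varepsilon>0$ and pick $\gamma_\varepsilon\in \widetilde\Gamma^1_\infty$ with
\[
\sup_{\varphi\in \mathrm{Im}(\gamma_\varepsilon)} \mathfrak{F}_{q,\alpha}(\varphi)<\nu^*_\infty+\varepsilon
\]
(we may assume $\nu^*_\infty<+\infty$, otherwise there is nothing to prove). Applying the coercivity estimate \eqref{coercive} pointwise in $t$, we get a constant $M>0$, depending only on $\nu^*_\infty$, $\varepsilon$, $N$, $q$, $\alpha$ and $\Omega$, such that
\[
\|\nabla \gamma_\varepsilon(t)\|_{L^2(\Omega)}\le M,\qquad \mbox{ for every }t\in[0,+\infty).
\]
Using \eqref{gagliardo} with $\varphi=\gamma_\varepsilon(t)-\gamma_\varepsilon(s)$ and $\vartheta=2/q-1>0$, we obtain
\[
\|\gamma_\varepsilon(t)-\gamma_\varepsilon(s)\|_{L^q(\Omega)}^2\le \Big(\lambda_1(\Omega)\Big)^{\vartheta-1}\,\|\gamma_\varepsilon(t)-\gamma_\varepsilon(s)\|_{L^1(\Omega)}^{2\vartheta}\,(2\,M)^{2\,(1-\vartheta)}.
\]
Since $\gamma_\varepsilon$ is continuous with values in $L^1(\Omega)$, the right-hand side tends to $0$ as $s\to t$, so $\gamma_\varepsilon\in C([0,+\infty);L^q(\Omega))$. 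The very same interpolation, applied now to $\gamma_\varepsilon(t)-(-w)$ and using $\|\gamma_\varepsilon(t)-(-w)\|_{L^1(\Omega)}\to 0$, upgrades the behavior at infinity to $\|\gamma_\varepsilon(t)-(-w)\|_{L^q(\Omega)}\to 0$. Hence $\gamma_\varepsilon\in \widetilde\Gamma_\infty$ and therefore
\[
\Lambda^*=\mu^*_\infty\le \sup_{\varphi\in \mathrm{Im}(\gamma_\varepsilon)} \mathfrak{F}_{q,\alpha}(\varphi)<\nu^*_\infty+\varepsilon.
\]
Letting $\varepsilon\searrow 0$ gives $\Lambda^*\le \nu^*_\infty$, concluding the proof.

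The main technical point—hardly an obstacle, but the key structural observation—is that the energy bound along a near-optimal path forces the image to be uniformly bounded in $W^{1,2}_0(\Omega)$, and on such bounded sets the $L^1$ and $L^q$ topologies agree for questions of continuity, thanks to the interpolation inequality \eqref{gagliardo}. Everything else is just bookkeeping around Theorem \ref{teo:MP}.
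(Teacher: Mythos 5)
Your proposal is correct and follows essentially the same route as the paper: the easy inclusion $\widetilde\Gamma_\infty\subset\widetilde\Gamma^1_\infty$ via H\"older, then a coercivity bound from \eqref{coercive} plus interpolation \eqref{gagliardo} to upgrade the near-optimal path from $L^1$-continuity to $L^q$-continuity, hence membership in $\widetilde\Gamma_\infty$. The only cosmetic difference is that you spell out separately the upgrade of $\gamma(+\infty)=-w$ from $L^1$ to $L^q$, whereas the paper folds it into the same interpolation estimate; both are fine.
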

\begin{proof}
By using the notation of Theorem \ref{teo:MP}, it is sufficient to prove that
\[
\nu^*_\infty=\mu^*_\infty.
\]
We first notice that
\[
\widetilde\Gamma_\infty\subset\widetilde\Gamma_\infty^1,
\]
since by H\"older's inequality we have
\[
\|\gamma(t)-\gamma(s)\|_{L^1(\Omega)}\le |\Omega|^{1-\frac{1}{q}}\,\|\gamma(t)-\gamma(s)\|_{L^q(\Omega)},\qquad \mbox{ for }t,s\in[0,+\infty),\ \gamma\in \widetilde\Gamma_\infty.
\]
This implies that
\[
\nu^*_\infty\le \mu^*_\infty.
\]
In order to prove the reverse inequality, for every $\varepsilon>0$ we take $\gamma_\varepsilon\in \widetilde\Gamma_\infty^1$ such that
\[
\nu^*_\infty+\varepsilon>\sup_{\varphi\in\mathrm{Im}(\gamma_\varepsilon)} \mathfrak{F}_{q,\alpha}(\varphi).
\]
By using again the coercivity estimate \eqref{coercive}, we thus get
\[
\int_\Omega |\nabla \varphi|^2\,dx\le C,\qquad \mbox{ for every }\varphi\in \mathrm{Im}(\gamma_\varepsilon).
\]
We can combine this estimate with the interpolation inequality \eqref{gagliardo}, in order to get
\[
\begin{split}
\|\gamma_\varepsilon(t)-\gamma_\varepsilon(s)\|_{L^q(\Omega)}&\le \Big(\lambda_1(\Omega)\Big)^\frac{\vartheta-1}{2}\,\|\gamma_\varepsilon(t)-\gamma_\varepsilon(s)\|_{L^1(\Omega)}^\vartheta\,\|\nabla \gamma_\varepsilon(t)-\nabla \gamma_\varepsilon(s)\|_{L^2(\Omega)}^{1-\vartheta}\\
&\le \widetilde{C}\,\|\gamma_\varepsilon(t)-\gamma_\varepsilon(s)\|_{L^1(\Omega)}^\vartheta,\qquad \mbox{ for every }t,s\in [0,+\infty).
\end{split}
\]
Here $\widetilde{C}>0$ is a uniform constant. This shows that $\gamma_\varepsilon\in \widetilde\Gamma_\infty$, as well. Thus, we obtain
\[
\nu^*_\infty+\varepsilon>\sup_{\varphi\in\mathrm{Im}(\gamma_\varepsilon)} \mathfrak{F}_{q,\alpha}(\varphi)\ge \mu^*_\infty.
\]
Since $\varepsilon>0$ was arbitrary, we get the desired conclusion.
\end{proof}
In the next result, we exclude that the mountain pass level $\Lambda^*$ collapses to the ground state level $\Lambda_1$, under the standing assumptions.
\begin{prop}
\label{prop:nocollapse}
Under the assumptions of Theorem \ref{teo:MP}, we have $\Lambda^*>\Lambda_1$.
In particular, we get
\[
\Lambda^*\ge \Lambda_2.
\]
\end{prop}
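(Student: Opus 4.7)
The plan is to exploit the \emph{mountain pass geometry} already established in Lemma \ref{lm:valli}, combined with the fact that $\Lambda^*$ is a critical value (Theorem \ref{teo:MP}) and the definition of $\Lambda_2$.

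First I would show $\Lambda^*>\Lambda_1$ by a simple ``crossing'' argument. Fix any $\gamma\in\Gamma$, which by definition is an element of $C([0,1];W^{1,2}_0(\Omega))$ with $\gamma(0)=w$ and $\gamma(1)=-w$. The map
\[
t\mapsto \|\gamma(t)-w\|_{W^{1,2}_0(\Omega)}
\]
is continuous on $[0,1]$, equals $0$ at $t=0$ and equals $\ell=2\,\|w\|_{W^{1,2}_0(\Omega)}$ at $t=1$. By the intermediate value theorem, there exists $t^*\in(0,1)$ such that
\[
\|\gamma(t^*)-w\|_{W^{1,2}_0(\Omega)}=\frac{\ell}{2}.
\]
At this point, Lemma \ref{lm:valli} furnishes a constant $C>\Lambda_1$, independent of $\gamma$, such that $\mathfrak{F}_{q,\alpha}(\gamma(t^*))\ge C$. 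Hence
\[
\max_{\varphi\in\mathrm{Im}(\gamma)}\mathfrak{F}_{q,\alpha}(\varphi)\ge C,
\]
and since $\gamma\in\Gamma$ was arbitrary, taking the infimum yields $\Lambda^*\ge C>\Lambda_1$.

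For the second assertion, I would combine the first part with Theorem \ref{teo:MP} and Proposition \ref{prop:dasolo}. By Theorem \ref{teo:MP}, $\Lambda^*\in\mathrm{Crit}(\mathfrak{F}_{q,\alpha})$, and by the previous paragraph $\Lambda^*>\Lambda_1$. By the very definition of $\Lambda_2$ as the infimum of critical values strictly greater than $\Lambda_1$ (which is a minimum, by part (2) of Proposition \ref{prop:dasolo}), it follows that $\Lambda^*\ge \Lambda_2$.

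There is no substantive obstacle here: all the analytic content has already been put in place (the fundamental gap in Proposition \ref{prop:dasolo}, the mountain pass structure of Lemma \ref{lm:valli}, and the criticality of $\Lambda^*$ in Theorem \ref{teo:MP}). The only thing to watch is the regularity of the path used in the intermediate value argument, but this is trivial since $\gamma\in C([0,1];W^{1,2}_0(\Omega))$ and the norm is continuous, so no extra technicalities arise.
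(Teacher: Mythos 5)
Your proof is correct, and it is somewhat more direct than the one in the paper. You obtain $\Lambda^*>\Lambda_1$ by observing that every admissible path must cross the sphere $\{\varphi\,:\,\|\varphi-w\|_{W^{1,2}_0(\Omega)}=\ell/2\}$ (intermediate value theorem applied to the continuous map $t\mapsto\|\gamma(t)-w\|_{W^{1,2}_0(\Omega)}$) and then invoking the uniform lower bound $C>\Lambda_1$ of Lemma \ref{lm:valli} on that sphere; this yields the quantitative estimate $\Lambda^*\ge C$. The paper instead argues by contradiction: assuming $\Lambda^*=\Lambda_1$, it extracts from a sequence of almost-optimal paths a minimizing sequence $\{\varphi_n\}_{n\in\mathbb{N}}$ for $\mathfrak{F}_{q,\alpha}$ lying outside the two balls of radius $\ell/4$ centered at $w$ and $-w$ (using connectedness of the path images), and then contradicts the dichotomy of Lemma \ref{lm:minseq}. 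The two arguments are close cousins: your route channels the appeal to Lemma \ref{lm:minseq} through Lemma \ref{lm:valli}, which the paper proves but then only uses to verify the mountain pass geometry in Theorem \ref{teo:MP}, whereas the paper's proof of this proposition in effect reproves that geometry by hand. Your deduction of $\Lambda^*\ge\Lambda_2$ from the criticality of $\Lambda^*$ (Theorem \ref{teo:MP}) and the definition of $\Lambda_2$ as an infimum of critical values exceeding $\Lambda_1$ is also correct and makes explicit what the paper leaves implicit; note only that the parenthetical appeal to Proposition \ref{prop:dasolo} is unnecessary (the infimum need not be attained, nor the gap be positive, for a critical value $>\Lambda_1$ to dominate $\Lambda_2$), which is just as well, since Theorem \ref{teo:MP} does not assume the $C^1$ regularity of $\partial\Omega$ required by that proposition.
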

\begin{proof}
We argue by contradiction and assume that $\Lambda^*=\Lambda_1$. Then by definition, for every $n\in\mathbb{N}$ there exists a curve $\gamma_n\in \Gamma$ such that
\[
\max_{\varphi\in \mathrm{Im}(\gamma_n)}\mathfrak{F}_{q,\alpha}(\varphi)<\Lambda_1+\frac{1}{n+1}.
\]
The continuity of $\gamma$ entails that $\mathrm{Im}(\gamma_n)$ is a connected set, while the two neighborhoods
\[
B_w:=\Big\{\varphi\in W^{1,2}_0(\Omega)\, :\, \|\varphi-w\|_{W^{1,2}_0(\Omega)}\le \frac{\ell}{4}\Big\},
\]
and
\[
B_{-w}:=\Big\{\varphi\in W^{1,2}_0(\Omega)\, :\, \|\varphi-(-w)\|_{W^{1,2}_0(\Omega)}\le \frac{\ell}{4}\Big\},
\]
are disjoint. Here $\ell$ still denotes the distance between $w$ and $-w$.
Then it is possible to choose $\varphi_n\in \mathrm{Im}(\gamma_n)$ such that
\[
\varphi_n\not\in B_w\cup B_{-w}.
\]
By construction, we thus have
\begin{equation}
\label{costruita}
\mathfrak{F}_{q,\alpha}(\varphi_n)<\Lambda_1+\frac{1}{n+1},\qquad \|\varphi_n-w\|_{W^{1,2}_0(\Omega)}>\frac{\ell}{4},\qquad \|\varphi_n-(-w)\|_{W^{1,2}_0(\Omega)}>\frac{\ell}{4}.
\end{equation}
The first fact shows that $\{\varphi_n\}_{n\in\mathbb{N}}$ is a minimizing sequence for $\mathfrak{F}_{q,\alpha}$ and thus it converges strongly either to $w$ or to $-w$, again thanks to Lemma \ref{lm:minseq}. But this contradicts either the second or the third property in \eqref{costruita}.
\end{proof}

\section{Stabilization for a rescaled problem}
\label{sec:5}

As explained in the Introduction, in order to study the asymptotic profile of the unique weak solution $u$ to \eqref{mainprob}, one can reduce to study the long-time behavior of the time scaling transformation
\begin{equation}
v(x,t)=\text{e}^{\alpha\, t}\,u(x,e^{t}-1),\qquad \mbox{ where } \alpha=\frac{1}{m-1}.\label{scaling}
\end{equation}
It is easily seen that $v$ solves the  following problem
\begin{equation}\label{rescaledeq}
\left\{\begin{array}{rcll}
\partial_t v&=&\Delta \Phi(v)+\alpha\,v, & \mbox{ in } Q,\\
v&=&0, & \mbox{ on  } \Sigma,\\
v(\cdot,0)&=&u_0,& \mbox{ in } \Omega.
\end{array}
\right.
\end{equation}
The solution $v$ is understood in the weak sense: the equation in \emph{(ii)} in Definition \ref{weaksolutiPME} is replaced by
\[
\iint_{Q}\Big(\langle\nabla\Phi(v),\nabla\eta\rangle-v\,\partial_t \eta\Big)\,dx\,dt=\alpha\iint_{Q}v\, \eta\,dx\,dt,
\]
for any test function $\eta\in C_{0}^\infty(Q)$. Using the scaling transformation \eqref{scaling} and Theorem \ref{existence} it follows that there is a unique weak solution $v$ to \eqref{rescaledeq}.
\par
According to \cite[Chapter 20, Section 2]{VaBook}, an efficient method to study the asymptotic profiles of $v$ consists in adopting an abstract dynamical systems approach. This basically allows to see the solution $v$ as an orbit in some functional space and consider the so-called $\omega-$\emph{limit}, namely the points to which the solution itself accumulates as time goes to infinity.
\begin{defi}
We define the {\it $\omega-$limit set for a solution $v$ to \eqref{rescaledeq} emanating from the initial datum} $u_{0}$ as the set
\[
\omega(u_0)=\Big\{f\in L^{m+1}(\Omega)\, :\, \lim_{n\to+\infty} \|v(\cdot,t_n)-f\|_{L^{m+1}(\Omega)}=0 \mbox{ for some }\{t_n\}_{n\in\mathbb{N}}\nearrow +\infty\Big\}.
\]
\end{defi}
The choice of the functional space $L^{m+1}(\Omega)$ is justified by the regularity of the flow for all $t\geq0$, under our standing assumptions.
\vskip.2cm
The following result characterizes the $\omega-$limit, provided that the solution $v$ satisfies some a priori estimates.
It will be crucial for our main result. The result is due to Langlais and Phillips and is taken from \cite[Theorem 1.1]{LP}, except for the fact that we remove the global $L^\infty$ assumption on the solution. For this reason, we give the proof, which is an amended version of that contained in \cite{LP}.
\begin{teo}[Characterization of the $\omega-$limit]
\label{teo:LP}
Let $m>1$ and let $\Omega\subset\mathbb{R}^N$ be an open bounded set.
If $v$ is the unique weak solution of \eqref{rescaledeq}, we suppose that there exists $T_0>0$ such that
\begin{equation}
\label{ipotesiLP}
\partial_t g(v)\in L^2([T_0,+\infty);L^2(\Omega)) \qquad \mbox{ and }\qquad \nabla \Phi(v)\in L^\infty([T_0,+\infty);L^2(\Omega)),
\end{equation}
being $g$ the nonlinearity in \eqref{g}.
Then every $\psi\in \omega(u_0)$ is such that $\Phi(\psi)\in W^{1,2}_0(\Omega)$ and it is a weak solution of the Lane-Emden equation \eqref{LE}, with
\[
q=\frac{m+1}{m}.
\]
\end{teo}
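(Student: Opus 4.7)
The plan follows a classical dynamical-systems approach: translate the orbit in time along a sequence accumulating at $\psi$, exploit the tail integrability of $\partial_t g(v)$ to force the translates to become asymptotically time-independent, and then pass to the limit in the PDE for the translates to obtain the stationary equation for $\psi$.

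Fix $\psi\in\omega(u_0)$ and a sequence $t_n\nearrow+\infty$ with $v(\cdot,t_n)\to\psi$ in $L^{m+1}(\Omega)$, and define the translates $v_n(x,t):=v(x,t_n+t)$ on $\Omega\times[0,1]$. Each $v_n$ is a weak solution of \eqref{rescaledeq} on its own time slab. The hypothesis \eqref{ipotesiLP} yields, for $n$ large, the uniform spatial bound $\|\nabla\Phi(v_n(\cdot,t))\|_{L^2(\Omega)}\le C$ for every $t\in[0,1]$ and the tail estimate $\int_0^1\|\partial_t g(v_n)(\cdot,t)\|_{L^2(\Omega)}^2\,dt\to 0$. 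Writing
\[
g(v_n(\cdot,t))-g(v_n(\cdot,0))=\int_0^t\partial_s g(v_n)(\cdot,s)\,ds
\]
and applying Cauchy--Schwarz, one obtains $g(v_n(\cdot,t))-g(v_n(\cdot,0))\to 0$ in $L^2(\Omega)$ uniformly in $t\in[0,1]$. Since $|g(s)|^2=|s|^{m+1}$, the map $g:L^{m+1}(\Omega)\to L^2(\Omega)$ is continuous, so $g(v_n(\cdot,0))\to g(\psi)$ in $L^2(\Omega)$, and therefore $g(v_n(\cdot,t))\to g(\psi)$ in $L^2(\Omega)$ uniformly in $t$. Extracting a further subsequence to get a.e.\ pointwise convergence, and combining with the convergence of norms $\|v_n(\cdot,t)\|_{L^{m+1}}^{m+1}=\|g(v_n(\cdot,t))\|_{L^2}^2\to\|\psi\|_{L^{m+1}}^{m+1}$, the Brezis--Lieb lemma (equivalently, uniform convexity of $L^{m+1}$) upgrades this to $v_n(\cdot,t)\to\psi$ strongly in $L^{m+1}(\Omega)$ for every $t\in[0,1]$, and hence $\Phi(v_n(\cdot,t))\to\Phi(\psi)$ in $L^{(m+1)/m}(\Omega)$.

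The uniform $W^{1,2}_0$ bound for $\Phi(v_n(\cdot,t))$ together with the previous identification of the pointwise-in-time limit yields $\Phi(\psi)\in W^{1,2}_0(\Omega)$ and the weak convergence $\nabla\Phi(v_n)\rightharpoonup\nabla\Phi(\psi)$ in $L^2([0,1]\times\Omega)$. Testing the weak formulation of \eqref{rescaledeq} for $v_n$ against $\eta(x,t)=\varphi(x)\,\zeta(t)$ with $\varphi\in C^\infty_0(\Omega)$ and $\zeta\in C^\infty_0((0,1))$,
\[
\int_0^1\!\!\int_\Omega\langle\nabla\Phi(v_n),\nabla\varphi\rangle\,\zeta\,dx\,dt-\int_0^1\!\!\int_\Omega v_n\,\varphi\,\zeta'\,dx\,dt=\alpha\int_0^1\!\!\int_\Omega v_n\,\varphi\,\zeta\,dx\,dt,
\]
one passes to the limit: strong $L^{m+1}$ convergence to the time-independent profile $\psi$ handles the two terms containing $v_n$, the integral against $\zeta'$ disappearing because $\int_0^1\zeta'\,dt=0$, while weak convergence of the gradients takes care of the first term. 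Factoring out $\int_0^1\zeta\,dt\neq 0$ and using density of $C^\infty_0(\Omega)$ in $W^{1,2}_0(\Omega)$ (together with $\psi\in L^2(\Omega)$ via $L^{m+1}\hookrightarrow L^2$), one arrives at
\[
\int_\Omega\langle\nabla\Phi(\psi),\nabla\varphi\rangle\,dx=\alpha\int_\Omega\psi\,\varphi\,dx\qquad\text{for every }\varphi\in W^{1,2}_0(\Omega).
\]
Setting $U:=\Phi(\psi)$ and noting that $\psi=|U|^{1/m-1}U=|U|^{q-2}U$ with $q=(m+1)/m$, this is precisely the weak formulation of the Lane--Emden equation \eqref{LE}.

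The main obstacle is the passage to the limit in the nonlinear gradient term: one needs spatial compactness sharp enough to identify the weak $L^2$-limit of $\nabla\Phi(v_n)$ with $\nabla\Phi(\psi)$, and this is exactly why the strong $L^{m+1}$ convergence of the translates to the common time-frozen profile $\psi$ is essential. This is precisely what the tail integrability in \eqref{ipotesiLP} buys, allowing one to dispense with the global $L^\infty$ bound on $v$ used in the original Langlais--Phillips argument.
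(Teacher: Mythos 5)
Your proof is correct and follows essentially the same route as the paper's: translate the orbit in time, use the tail integrability of $\partial_t g(v)$ to show the translates freeze onto the time-independent profile $\psi$ in $L^{m+1}(\Omega\times(0,1))$, and pass to the limit in the weak formulation tested against a product $\varphi(x)\zeta(t)$. The only (immaterial) differences are that the paper upgrades $g(V_n)\to g(\psi)$ in $L^2$ to $V_n\to\psi$ in $L^{m+1}$ via the elementary inequality \eqref{egolo} rather than Brezis--Lieb, and handles the gradient term by integrating by parts onto $\Delta\varphi$ instead of identifying the weak $L^2$-limit of $\nabla\Phi(V_n)$.
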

\begin{proof}
Let us take $\psi\in \omega(u_0)$, then there exists a diverging sequence of times $\{t_n\}_{n\in\mathbb{N}}$ such that
\[
\lim_{n\to\infty} \|\psi-v(\cdot,t_n)\|_{L^{m+1}(\Omega)}=0.
\]
We first prove that $\Phi(\psi)\in W^{1,2}_0(\Omega)$. By using the elementary inequality \eqref{egolo0}, we have
\[
|\Phi(\psi(x))-\Phi(v(x,t_n))|^\frac{m+1}{m}\le C\,\Big(|\psi(x)|^{m-1}+|v(x,t_n)|^{m-1}\Big)^\frac{m+1}{m}\, |\psi(x)-v(x,t_n)|^\frac{m+1}{m}.
\]
If we now apply H\"older's inequality with exponents $m$ and $m/(m-1)$, with some simple algebraic manipulations we get
\[
\begin{split}
\int_\Omega |\Phi(\psi(x))-\Phi(v(x,t_n))|^\frac{m+1}{m}\,dx&\le C\,\left(\int_\Omega |\psi|^{m+1}\,dx+\int_\Omega |v(x,t_n)|^{m+1}\,dx\right)^\frac{m-1}{m}\\
&\times\left(\int_\Omega |\psi(x)-v(x,t_n)|^{m+1}\,dx\right)^\frac{1}{m}.
\end{split}
\]
Since the sequence $\{v(\cdot,t_{n})\}_{n\in\mathbb{N}}$ is bounded in $L^{m+1}(\Omega)$, we thus conclude that
\begin{equation}
\label{Phiconv}
\lim_{n\to\infty} \int_\Omega |\Phi(\psi(x))-\Phi(v(x,t_n))|^\frac{m+1}{m}\,dx=0.
\end{equation}
Moreover, the uniform estimate on the $W^{1,2}_0(\Omega)$ norm given by the second property in \eqref{ipotesiLP} and the continuity property $v\in C([0,+\infty);L^{m+1}(\Omega))$ implies that there exists
a constant $C>0$ such that
\[
\int_{\Omega}|\nabla \Phi(v(x,t))|^{2}dx\leq C,
\]
for \emph{all} $t\ge T_0$. Therefore, we get that $\Phi(v(\cdot,t_n))$ weakly converges in $W^{1,2}_0(\Omega)$ (up to a subsequence). By the uniqueness of the limit, such a function must coincide with $\Phi(\psi)$. Since $W^{1,2}_0(\Omega)$ is weakly closed, we finally get
\[
\Phi(\psi)\in W^{1,2}_0(\Omega),
\]
as desired.
\vskip.2cm\noindent
With an argument similar to that leading to \eqref{Phiconv}, we can also get
\begin{equation}
\label{L2g}
\lim_{n\to\infty} \left\|g(\psi)-g(v(\cdot,t_n))\right\|_{L^2(\Omega)}=0.
\end{equation}
Indeed, it is sufficient to use again Lemma \ref{lm:potenzeconvesse}, this time with $\gamma=(m+1)/2$, so to get
\[
\int_\Omega |g(\psi(x))-g(v(x,t_n))|^2\,dx\le C\,\int_\Omega \left(|\psi(x)|^\frac{m-1}{2}+|v(x,t_n)|^\frac{m-1}{2}\right)^2\,|\psi(x)-v(x,t_n)|^2\,dx.
\]
We leave the details to the reader.
\par
We now set
\[
V_n(x,s)=v(x,t_n+s),\qquad \mbox{ for } x\in\Omega,\ s\in(-1,1)\ \mbox{Â and } n\ge 1,
\]
and claim that
\begin{equation}
\label{claimLP}
\lim_{n\to\infty} \|g(V_n)-g(\psi)\|_{L^2(\Omega\times(-1,1))}=0.
\end{equation}
Indeed, by basic Calculus and Jensen's inequality, for $s\in(-1,1)$ we have
\[
\begin{split}
\int_\Omega |g(V_n(x,s))-g(v(x,t_n))|^2\,dx&=\int_\Omega |g(v(x,s+t_n))-g(v(x,t_n))|^2\,dx\\
&\le 2\,\int_\Omega \left(\int^{t_n+1}_{t_n-1} |\partial_t g(v(x,t))|^2\,dt\right)\,dx.
\end{split}
\]
A further integration on $s\in(-1,1)$ gives
\[
\iint_{\Omega\times[-1,1]} |g(V_n(x,s))-g(v(x,t_n))|^2\,dx\,ds\le C\,\int_\Omega \left(\int^{t_n+1}_{t_n-1} |\partial_t g(v(x,t))|^2\,dt\right)\,dx.
\]
By using the first assumption in \eqref{ipotesiLP} and the absolute continuity of the Lebesgue integral, we get
\[
\lim_{n\to\infty} \int_\Omega \left(\int^{t_n+1}_{t_n-1} |\partial_t g(v(x,t))|^2\,dt\right)\,dx=0,
\]
and thus
\[
\lim_{n\to\infty}\iint_{\Omega\times(-1,1)} |g(V_n(x,s))-g(v(x,t_n))|^2\,dx\,ds=0.
\]
Then \eqref{claimLP} follows from the latter and \eqref{L2g}, by using the triangle inequality. In turn, from \eqref{L2g} we can now infer convergence of $V_n$ itself. Indeed, by \eqref{egolo} with $\gamma=(m+1)/2$ we have
\[
|V_n(x,s)-\psi(x)|^{m+1}\le C\,|g(V_n(x,s))-g(w(x))|^2.
\]
An integration in space-time now gives
\begin{equation}
\label{convspostata}
\lim_{n\to\infty} \|\psi-V_n\|_{L^{m+1}(\Omega\times(-1,1))}=0.
\end{equation}
We finally prove that $\Phi(\psi)$ is a weak solution of the Lane-Emden equation.
We take a cut-off function in time $\rho\in C^\infty_0((-1,1))$, such that
\[
\rho\ge 0\qquad \mbox{ and }\qquad \int_{-1}^1 \rho(s)\,ds=1.
\]
We also take $\varphi\in C^\infty_0(\Omega)$ and insert the test function $\rho(t-t_n)\,\varphi(x)$ in the weak formulation of our equation. We obtain
\[
\begin{split}
\int_{t_n-1}^{t_n+1}\int_\Omega v\,\rho'(t-t_n)\,\varphi\,dx\,dt&=\int_{t_n-1}^{t_n+1}\int_\Omega \langle \nabla \Phi(v),\nabla \varphi\rangle\,\rho(t-t_n)\,dx\,dt\\
&-\alpha\,\int_{t_n-1}^{t_n+1}\int_\Omega v\,\varphi\,\rho(t-t_n)\,dx\,dt.
\end{split}
\]
We make the change of variable $s=t-t_n$ in the time integral. By recalling the definition of $V_n$ and integrating by parts the terms containing the gradient of $V_n$, we get
\[
\begin{split}
\int_{-1}^{1}\int_\Omega V_n\,\rho'\,\varphi\,dx\,dt&=-\int_{-1}^{1}\int_\Omega \Phi(V_n)\,\Delta \varphi\,\rho\,dx\,dt-\alpha\,\int_{-1}^{1}\int_\Omega V_n\,\varphi\,\rho\,dx\,dt.
\end{split}
\]
By using \eqref{convspostata}, we can pass to the limit as $n$ goes to $\infty$ in the previous identity, so to get
\[
\int_{-1}^{1}\int_\Omega \psi\,\rho'\,\varphi\,dx\,dt=-\int_{-1}^{1}\int_\Omega \Phi(\psi)\,\Delta \varphi\,\rho\,dx\,dt-\alpha\,\int_{-1}^{1}\int_\Omega \psi\,\varphi\,\rho\,dx\,dt.
\]
If we now use that both $\psi$ and $\varphi$ depends only on the spatial variable, while $\rho$ is a function of time integrating at $1$ and with compact support in $(-1,1)$, we get
\[
0=-\int_\Omega \Phi(\psi)\,\Delta \varphi\,dx-\alpha\,\int_\Omega \psi\,\varphi\,dx.
\]
By recalling that $\Phi(\psi)\in W^{1,2}_0(\Omega)$ and observing that
\[
\psi=|\Phi(\psi)|^{q-2}\,\Phi(\psi),\qquad \mbox{ for } q=\frac{m+1}{m},
\]
the previous identity shows that $\Phi(\psi)$ weakly solves the claimed equation.
\end{proof}
In the next result we show that the crucial assumptions \eqref{ipotesiLP} are actually verified, in our setting. The idea of the proof heavily relies on the so called {\it Lyapunov method}, aimed at constructing an energy functional which is decreasing along the flow, satisfying then a suitable \emph{entropy-entropy dissipation} inequality which will be essential to prove the relative compactness of the orbit of our solution $v$. However, some care is needed, due to the possible lack of regularity of solutions.
\begin{prop}
\label{prop:stimecruciali}
Let $m>1$ and let $\alpha>0$. Let $\Omega\subset\mathbb{R}^N$ be an open bounded set. Finally, we take $v$ to be the unique weak solution to the rescaled problem \eqref{rescaledeq}.
\par
If $u_0\in L^{m+1}(\Omega)$ is such that $\Phi(u_0)\in W^{1,2}_0(\Omega)$, then
\begin{equation}
\label{LP2}
\partial_tg(v)\in L^2([0,+\infty);L^2(\Omega)) \qquad \mbox{ and }\qquad \nabla \Phi(v)\in L^\infty([0,+\infty);L^2(\Omega)),
\end{equation}
being $g$ the nonlinearity in \eqref{g}. Moreover, the following entropy-entropy dissipation inequality holds for every $T>0$
\begin{equation}
\label{LP1}
\mathfrak{F}_{\frac{m+1}{m},\alpha}(\Phi(v(\cdot,T)))+\frac{4\,m}{(m+1)^{2}}\,\iint_{Q_{T}}|\partial_{t}g(v)|^{2}\,dx\,dt
\le \mathfrak{F}_{\frac{m+1}{m},\alpha}(\Phi(u_0)).
\end{equation}
\end{prop}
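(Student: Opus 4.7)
The plan is to first derive the entropy-entropy dissipation inequality \eqref{LP1} via a Lyapunov argument, and then to extract the regularity statements \eqref{LP2} from \eqref{LP1} using the coercivity estimate \eqref{coercive}. This parallels the derivation of the second energy inequality for the PME in Theorem \ref{existence}, with the additional $\alpha v$ term of \eqref{rescaledeq} contributing, in the natural way, the $L^q$ summand of $\mathfrak{F}_{q,\alpha}$.

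\emph{Formal derivation.} Multiplying the rescaled equation by $\partial_t \Phi(v)$ and integrating over $\Omega$ is driven by two algebraic identities. First, since $\partial_t \Phi(v) = m|v|^{m-1}\partial_t v$ and $\partial_t g(v) = \tfrac{m+1}{2}|v|^{(m-1)/2}\partial_t v$,
\[
(\partial_t v)\,\partial_t \Phi(v) \;=\; m\,|v|^{m-1}(\partial_t v)^2 \;=\; \frac{4m}{(m+1)^2}\,|\partial_t g(v)|^2.
\]
Second, with $q = (m+1)/m$ one has $v = |\Phi(v)|^{q-2}\Phi(v)$, hence $v\,\partial_t\Phi(v) = q^{-1}\partial_t|\Phi(v)|^q$. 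Combining these with an integration by parts for the $\Delta\Phi(v)$ term (which uses the Dirichlet condition on $\Phi(v)$) produces
\[
\frac{d}{dt}\,\mathfrak{F}_{q,\alpha}(\Phi(v(\cdot,t))) \;+\; \frac{4m}{(m+1)^2}\int_\Omega |\partial_t g(v)|^2\,dx \;=\; 0,
\]
and integrating from $0$ to $T$ yields the equality version of \eqref{LP1}.

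\emph{Rigorous justification.} The above computation requires $\partial_t \Phi(v)$ to be an admissible test function, which is not supported by the baseline regularity of Definition \ref{weaksolutiPME}. I would handle this by a density argument: pick smooth initial data $u_0^\varepsilon$ with $\Phi(u_0^\varepsilon)\in C_0^\infty(\Omega)$ and $\Phi(u_0^\varepsilon)\to\Phi(u_0)$ strongly in $W_0^{1,2}(\Omega)$ (a standard mollification works). For such data the classical theory of \cite{VaBook} produces solutions $v^\varepsilon$ of \eqref{rescaledeq} regular enough to make the formal calculation rigorous, so that \eqref{LP1} holds for $v^\varepsilon$ as an equality. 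Passing to the limit $\varepsilon\to 0$ by combining continuous dependence of weak solutions on initial data with weak lower semicontinuity of $\|\nabla\Phi(\cdot)\|_{L^2}^2$, strong convergence of the $L^q$-norm, and Fatou's lemma for the dissipation term, produces \eqref{LP1} as the stated inequality. A technically equivalent alternative is to work with Steklov averages in time, which sidesteps approximating the data. This is the only real obstacle in the proof; the rest is organization.

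\emph{From \eqref{LP1} to \eqref{LP2}.} The coercivity estimate \eqref{coercive} applied to $\Phi(v(\cdot,T))$ gives $\mathfrak{F}_{q,\alpha}(\Phi(v(\cdot,T)))\ge -C$ uniformly in $T$, so \eqref{LP1} produces a uniform-in-$T$ bound on $\iint_{Q_T}|\partial_t g(v)|^2\,dx\,dt$; sending $T\to +\infty$ yields the first claim of \eqref{LP2}. For the $L^\infty L^2$ bound on $\nabla\Phi(v)$, \eqref{LP1} yields
\[
\tfrac{1}{2}\,\|\nabla\Phi(v(\cdot,T))\|_{L^2(\Omega)}^2 \;\le\; \mathfrak{F}_{q,\alpha}(\Phi(u_0)) \;+\; \tfrac{\alpha}{q}\,\|\Phi(v(\cdot,T))\|_{L^q(\Omega)}^q,
\]
and applying the Sobolev-Poincar\'e inequality \eqref{sobolevpoincare} followed by Young's inequality with exponents $2/q$ and $2/(2-q)$ (exactly as in the proof of \eqref{coercive}) absorbs the $L^q$-term into the $W_0^{1,2}$-term, producing a bound on $\|\nabla\Phi(v(\cdot,T))\|_{L^2(\Omega)}$ uniform in $T$.
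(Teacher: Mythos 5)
Your formal derivation, and the passage from \eqref{LP1} to \eqref{LP2} via the coercivity estimate \eqref{coercive}, agree with the paper. But the step you label ``rigorous justification'' contains a genuine gap, and it is precisely the step that the paper's proof is built around.

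You propose to mollify only the \emph{initial datum}, claiming that for smooth $u_0^\varepsilon$ the solution $v^\varepsilon$ of \eqref{rescaledeq} is ``regular enough to make the formal calculation rigorous.'' This is false for the Porous Medium Equation: the nonlinearity $\Phi(s)=|s|^{m-1}s$ degenerates at $s=0$, and the solution is only H\"older continuous near its zero set even when the initial datum is $C_0^\infty$. Since the whole point of this paper is sign-changing data, the set $\{v=0\}$ is nontrivial, and there is no classical regularity theory that delivers $\partial_t\Phi(v^\varepsilon)$ as an admissible test function for the degenerate problem. The paper instead regularizes the \emph{nonlinearity}, replacing $\Phi$ by a uniformly non-degenerate $\Phi_n$ (and, in a coupled way, taking $v_{0,n}=\Phi_n^{-1}(f_n)$); the resulting uniformly parabolic problem does have smooth solutions by \cite{LSU}, and the entropy--entropy dissipation identity \eqref{entropydissineq} is exact there. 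The bulk of the proof is then devoted to passing to the limit $n\to\infty$ (Steps 3--5), which is nontrivial: one needs an Ascoli--Arzel\`a argument in $C([0,T];L^2)$ combined with a uniform fractional Sobolev estimate via Riesz--Fr\'echet--Kolmogorov to identify the limit, plus the various lower semicontinuity arguments for the two sides. Note also that one cannot short-circuit this by transporting the second energy inequality of Theorem \ref{existence} through the time scaling $v(x,t)=e^{\alpha t}u(x,e^t-1)$: unwinding the scaling, the claimed $L^\infty_t L^2_x$ bound on $\nabla\Phi(v)$ is equivalent to the quantitative decay $\|\nabla\Phi(u(\cdot,s))\|_{L^2}\lesssim (1+s)^{-m\alpha}$, which is strictly stronger than the estimate in Theorem \ref{existence} and is essentially what the proposition is proving. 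The term $\alpha v$ in \eqref{rescaledeq} and the corresponding $L^q$ piece of $\mathfrak{F}_{q,\alpha}$ are what make this work; there is no free transfer.

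Your parenthetical appeal to Steklov averages is a legitimate alternative idea, but as stated it is an assertion, not an argument: to make it work you would have to show that $\frac{1}{h}\big(v(t+h)-v(t)\big)\cdot\frac{1}{h}\big(\Phi(v(t+h))-\Phi(v(t))\big)$ converges in the right sense to $\frac{4m}{(m+1)^2}|\partial_t g(v)|^2$ before knowing that $\partial_t g(v)\in L^2$, which requires its own careful treatment. Finally, the paper's Step 7 removes the smoothness assumption on $\partial\Omega$ by an exhaustion argument, a point your proposal does not address.

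===

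Also note two small inaccuracies in your formal computation for the record: $\partial_t g(v)=\tfrac{m+1}{2}|v|^{(m-1)/2}\partial_t v$, so $|\partial_t g(v)|^2=\tfrac{(m+1)^2}{4}|v|^{m-1}|\partial_t v|^2$, which indeed gives $(\partial_t v)\partial_t\Phi(v)=m|v|^{m-1}|\partial_t v|^2=\tfrac{4m}{(m+1)^2}|\partial_t g(v)|^2$ as you wrote; that part is fine. But the paper ultimately obtains \eqref{LP1} only as an \emph{inequality} after passing to the limit, and this is where the lower semicontinuity bookkeeping enters --- your sketch acknowledges this but underestimates how much work it represents.
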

\begin{proof}
We first present the heuristics behind the proof.
By recalling the definition \eqref{functionalF}, for every $\varphi\in L^{m+1}(\Omega)$ such that $\Phi(\varphi)\in W^{1,2}_0(\Omega)$, we set
\[
\mathcal{V}[\varphi]=\mathfrak{F}_{\frac{m+1}{m},\alpha}(\Phi(\varphi))=\frac{1}{2}\,\int_\Omega |\nabla \Phi(\varphi)|^2\,dx-\frac{\alpha\,m}{m+1}\,\int_\Omega |\Phi(\varphi)|^\frac{m+1}{m}\,dx.
\]
Using the argument in \cite[Section 2]{Va}, we know that \emph{formally} this is a Lyapunov function, i.\,e. it decreases along the solution $t\mapsto v(t)$ because of the {\it entropy-entropy dissipation identity}
\[
-\frac{d}{dt} \mathcal{V}[v(\cdot,t)]=\mathcal{I}[v(\cdot,t)].
\]
Here $\mathcal{I}(t)$ is the entropy dissipation defined by
\[
\mathcal{I}[v(\cdot,t)]=\frac{4\,m}{(m+1)^2}\,\int_{\Omega} \left|\partial_t g(v(x,t))\right|^2\,dx.
\]
This formally leads to the entropy-entropy dissipation identity
\begin{equation}
\mathcal{V}[v(\cdot,t)]+ \int_{0}^{t}\mathcal{I}[v(\cdot,\tau)]\,d\tau= \mathcal{V}[u_{0}],\qquad \mbox{ for every } t\ge 0,\label{entropyentropdiss}
\end{equation}
which implies \eqref{LP1}. In order to justify this estimate rigourously, we will however take a slightly different path: rather than proving directly that $\mathcal{V}$ is a Lyapunov functional, we will go through a regularization procedure, use the entropy-entropy dissipation identity for this regularized problem to obtain an entropy bound and then passing to the limit in the regularization parameter.
This will give directly the weaker information \eqref{LP1}, which is however enough for our purposes.
\vskip.2cm
We divide the proof into various steps, for ease of readability. In the first six steps, we will prove the result under the additional assumption that $\Omega$ is smooth. Then, in the last step, we will briefly explain how to remove this requirement.
\vskip.2cm\noindent
{\bf Step 1: a regularized problem.} For the time being, let us suppose that $\Omega$ has a $C^\infty$ boundary. We take a regular approximation $\Phi_{n}$ of the nonlinearity $\Phi$, which in particular eliminates the degeneracy at $v=0$. A practical choice is
\[
\Phi_{n}(s)=m\,\int_0^s \left(\frac{1}{n} +\tau^{2}\right)^{\frac{m-1}{2}}\,d\tau,\qquad \mbox{ for } s\in\mathbb{R},
\]
which converges to $\Phi$ locally uniformly on $\mathbb{R}$.
Notice that $\Phi_n'(s)>0$ for every $s\in\mathbb{R}$ and that by construction
\begin{equation}
\label{uBfin}
\Phi^{\prime}_{n}(s)=m\,\left(\frac{1}{n} +s^{2}\right)^{\frac{m-1}{2}}\geq m\,|s|^{m-1}=\Phi^{\prime}(s).
\end{equation}
Let $v_{n}$ be the solution to the regularized problem
\begin{equation}
\label{approxim}
\left\{\begin{array}{rcll}
u_t&=&\Delta \Phi_n(u)+\alpha\,u, & \mbox{ in } Q,\\
u&=&0, & \mbox{ on } \Sigma,\\
u(\cdot,0)&=&v_{0,n},& \mbox{ in } \Omega,
\end{array}
\right.
\end{equation}
where $v_{0,n}\in C^\infty_0(\Omega)$ is a smooth approximation of $v_{0}$, such that
\begin{equation}
\label{prima}
\lim_{n\to\infty} \|v_{0,n}-v_0\|_{L^{m+1}(\Omega)}=0,
\end{equation}
and
\begin{equation}
\label{seconda}
\lim_{n\to\infty}\|\Phi_n(v_{0,n})-\Phi(v_{0})\|_{W^{1,2}_0(\Omega)}=0.
\end{equation}
Let us construct such a sequence.
At first, we choose any sequence $\{f_n\}_{n\in\mathbb{N}}\subset C^\infty_0(\Omega)$ such that
\[
\lim_{n\to\infty}\|f_{n}-\Phi(v_{0})\|_{W^{1,2}_0(\Omega)}=0.
\]
This is possible, thanks to the very definition of $W^{1,2}_0(\Omega)$.
Then we set
\[
v_{0,n}=\Phi_{n}^{-1}(f_{n}),\qquad \mbox{ for every } n\in\mathbb{N}\setminus\{0\},
\]
and observe that we still have $v_{0,n}\in C^\infty_0(\Omega)$, thanks to the fact that $\Phi_{n}\in C^{\infty}(\mathbb{R})$ and
\[
\Phi_n'(s)\not=0 \mbox{ for every } s\in \mathbb{R}\qquad \mbox{ and }\qquad \Phi_n(0)=0.
\]
Thus we have \eqref{seconda}. In order to prove \eqref{prima},
by using the definition of $v_{0,n}$, the triangle inequality and Lemma \ref{lm:techno}, we can infer
\[
\begin{split}
|v_{0,n}-v_{0}|&\leq |\Phi_{n}^{-1}(f_{n})-\Phi_{n}^{-1}(\Phi(v_{0}))|+|\Phi_{n}^{-1}(\Phi(v_{0}))-\Phi_{n}^{-1}(\Phi_{n}(v_{0}))|\\
&\leq C\,\left(|f_{n}-\Phi(v_{0})|^\frac{1}{m}+|\Phi(v_{0})-\Phi_{n}(v_{0})|^\frac{1}{m}\right),
\end{split}
\]
for some constant $C=C(m)>0$.
This implies that
\begin{equation}\label{convm+1}
\int_{\Omega}|v_{0,n}-v_{0}|^{m+1}\,dx\leq C\left(\int_{\Omega}|f_{n}-\Phi(v_{0})|^\frac{m+1}{m}\,dx+\int_{\Omega}|\Phi(v_{0})-\Phi_{n}(v_{0})|^\frac{m+1}{m}\,dx\right),
\end{equation}
possibly for a different constant $C=C(m)>0$. We now observe that
\[
W^{1,2}_0(\Omega)\hookrightarrow L^\frac{m+1}{m}(\Omega),
\]
since $\Omega$ is bounded and
\[
\frac{m+1}{m}<2.
\]
Thus, from the strong convergence in $W^{1,2}_0(\Omega)$, we get
\begin{equation}
\label{1}
\lim_{n\to\infty} \int_{\Omega}|f_{n}-\Phi(v_{0})|^\frac{m+1}{m}\,dx=0,
\end{equation}
as well. On the other hand, by construction of $\Phi_n$ we know that
\[
\lim_{n\to\infty} \Phi_n(v_0(x))=\Phi(v_0(x)),\qquad \mbox{ for a.\,e. }x\in\Omega,
\]
and
\[
|\Phi_n(v_0)|^\frac{m+1}{m}\le C\,(1+|v_0|^m)^\frac{m+1}{m}\in L^1(\Omega).
\]
An application of the Dominated Convergence Theorem then gives
\begin{equation}
\label{2}
\lim_{n\to\infty} \int_{\Omega}|\Phi(v_{0})-\Phi_n(v_0)|^\frac{m+1}{m}\,dx=0.
\end{equation}
By using \eqref{1} and \eqref{2} in \eqref{convm+1}, we finally obtain \eqref{prima}, as desired.
\vskip.2cm\noindent
{\bf Step 2: energy inequality for the regularized problem.}
Now, classical Regularity Theory for parabolic equations ensures that the solution $v_{n}(x,t)$ is smooth (see \cite[Theorem 6.1, Chapter V]{LSU}). Multiplying the equation \eqref{approxim} by $\partial_{t}\,\Phi_{n}(v_{n})$ and integrating in the spatial variable over $\Omega$, we get
\begin{equation}
\label{testate}
\int_\Omega |\partial_t v_n|^2\,\Phi'_n(v_n)\,dx=\int_\Omega \Delta \Phi_n(v_n)\,\partial_t \Phi_n(v_n)\,dx+\alpha\,\int_\Omega v_n\,\partial_t\Phi_n(v_n)\,dx.
\end{equation}
If we now introduce the smooth convex function $F_n$ defined through
\[
F_{n}'(s)=s\,\Phi_{n}'(s),\qquad \mbox{ for every } s\in\mathbb{R},
\]
and use an integration by parts in the integral containing the Laplacian in \eqref{testate},
we easily find
\[
-\frac{d}{dt}\left[\frac{1}{2}\int_{\Omega}|\nabla \Phi_{n}(v_{n})|^{2}\,dx-\alpha\int_{\Omega}F_{n}(v_{n})\,dx\right]=\int_{\Omega}\Phi_{n}'(v_{n})\,|\partial_{t}v_{n}|^{2}dx.
\]
By integrating this identity in time, we obtain
\begin{equation}
\label{entropydissineq}
\iint_{Q_{t}}\Phi'_{n}(v_{n})\,|\partial_t v_{n}|^{2}\,dx\,d\tau+\mathcal{V}_{n}[v_{n}(\cdot,t)]=
\mathcal{V}_{n}[v_{0,n}],\qquad \mbox{ for every }n\ge 1\ \mbox{Â and }t\ge 0,
\end{equation}
where the functional $\mathcal{V}_n$ is defined by
\[
\mathcal{V}_{n}[\varphi]:=\frac{1}{2}\int_{\Omega}|\nabla \Phi_{n}(\varphi)|^{2}\,dx-\alpha\int_{\Omega}F_{n}(\varphi)\,dx,\qquad \mbox{ for } \Phi_{n}(\varphi)\in W^{1,2}_0(\Omega),
\]
and it is the \emph{approximated} Lyapunov functional.
\par
We now wish to pass to the limit as $n$ goes to $\infty$ in \eqref{entropydissineq}. Before proceeding further, we need to analyze some properties of $F_n$. As a primitive of $s\mapsto s\,\Phi_n'(s)$ we can take
\begin{equation}
\label{ubFn}
F_{n}(s)=m\,\int_{0}^{s}\tau\,\left(\frac{1}{n} +\tau^{2}\right)^{\frac{m-1}{2}}\,d\tau=\frac{m}{m+1}\,\left[\left(\frac{1}{n}+s^2\right)^\frac{m+1}{2}-\left(\frac{1}{n}\right)^\frac{m+1}{2}\right].
\end{equation}
Observe that $F_n$ converges to
\[
F(s)=\frac{m}{m+1}\,|s|^{m+1},
\]
locally uniformly on $\mathbb{R}$.
We observe that
\[
|F'_{n}(s)|\geq |F'(s)|,\ \mbox{ for every } s\in\mathbb{R} \qquad \mbox{ and }\qquad F_n(0)=F(0),
\]
then these two facts immediately imply
\[
F_n(s)\ge F(s),\qquad \mbox{ for every }s\in\mathbb{R}.
\]
\vskip.2cm\noindent
{\bf Step 3: taking the limit in \eqref{entropydissineq} -- RHS}. Observe that the gradient term in the right-hand side of \eqref{entropydissineq} easily passes to the limit, thanks to \eqref{seconda}. Let us check the second term. By the triangle inequality, we have
\begin{equation}
\label{convergFn}
\begin{split}
\left|\int_\Omega [F_n(v_{0,n})-F(v_0)]\,dx\right|&\le \int_{\Omega}|F_{n}(v_{0,n})-F_n(v_{0})|\,dx+\int_{\Omega}|F_n(v_0)-F(v_{0})|\,dx.\\
\end{split}
\end{equation}
By using Lemma \ref{lm:techno2}, we have
\[
\int_\Omega |F_{n}(v_{0,n})-F_n(v_{0})|\,dx\le m\,\int_\Omega \Big((1+|v_{0,n}|^2)^\frac{m}{2}+(1+|v_{0}|^2)^\frac{m}{2}\Big)\,|v_{0,n}-v_0|\,dx.
\]
By using H\"older's inequality with exponents $m+1$ and $(m+1)/m$ and recalling \eqref{prima}, we get
\[
\lim_{n\to\infty} \int_{\Omega}|F_n(v_{0,n})-F_n(v_{0})|\,dx=0.
\]
Moreover, by virtue of \eqref{ubFn} and using that $F_n\ge F$, we find
\[
\int_{\Omega}|F_{n}(v_{0})-F(v_{0})|\,dx\le\frac{m}{m+1}\int_{\Omega}\left[ \left(\frac{1}{n} +|v_{0}|^{2}\right)^{\frac{m+1}{2}}-|v_{0}|^{m+1}
\right]\,dx.
\]
The last integral tends to $0$, thanks to the Monotone Convergence Theorem. Thus from \eqref{convergFn} we get
\[
\lim_{n\to\infty} \int_\Omega F_n(v_{0,n})\,dx=\int_\Omega F(v_0)\,dx.
\]
This finally shows that
\begin{equation}
\label{RHS}
\lim_{n\to\infty} \mathcal{V}_n[v_{0,n}]=\mathcal{V}[v_0]=\frac{1}{2}\,\int_\Omega |\nabla \Phi(v_0)|^2\,dx-\frac{\alpha\,m}{m+1}\,\int_\Omega |\Phi(v_0)|^\frac{m+1}{m}\,dx.
\end{equation}
{\bf Step 4: some uniform estimates}. We still need to pass to the limit in the left-hand side of \eqref{entropydissineq}. This is more delicate and we will need some uniform estimates for the solutions $v_n$.
\par
At first, we prove that the functionals $V_{n}$ are equi-coercive on $W^{1,2}_0(\Omega)$, uniformly in time. Indeed, \eqref{ubFn} clearly gives
\[
\int_{\Omega}F_{n}(v_n)\,dx\leq C\,\int_{\Omega}|v_n|^{m+1}\,dx+C\,|\Omega|,
\]
for some constant $C=C(m)>0$. Moreover, by virtue of \eqref{uBfin}, we have for every $n\ge 1$
\[
\begin{split}
\frac{1}{2}\,\int_{\Omega}|\nabla \Phi_{n}(v_{n})|^{2}\,dx=\frac{1}{2}\,\int_{\Omega}|\Phi'_n(v_n)|^2\,|\nabla v_{n}|^{2}\,dx&\ge \frac{1}{2}\,\int_\Omega |\Phi'(v_n)|^2\,|\nabla v_{n}|^{2}\,dx,
\end{split}
\]
so that
\begin{equation}
\label{H1sci}
\frac{1}{2}\,\int_{\Omega}|\nabla \Phi_{n}(v_{n})|^{2}\,dx\ge \frac{1}{2}\,\int_{\Omega}|\nabla \Phi(v_n)|^{2}\,dx.
\end{equation}
By recalling that $|v_n|^{m+1}=|\Phi(v_n)|^{(m+1)/m}$, the last two estimates show that
\[
\mathcal{V}_n[v_n(\cdot,t)]\ge \frac{1}{2}\,\int_\Omega |\nabla \Phi(v_n)|^2\,dx-C\,\int_{\Omega}|\Phi(v_n)|^\frac{m+1}{m}\,dx-C\,|\Omega|.
\]
We can now apply Young's inequality as in the proof of \eqref{coercive}, so to end up with the coercivity estimate
\begin{equation}
\label{uniformissima}
\mathcal{V}_n[v_n(\cdot,t)]\geq \frac{1}{C_1}\,\int_{\Omega}|\nabla\Phi(v_n)|^{2}\,dx-C_2,
\end{equation}
for two constants $C_1,C_2>0$ depending on $N,m$ and $\Omega$, but neither on $n$ nor on $t$.
\par
Then this last inequality, together with \eqref{entropydissineq} and \eqref{RHS}, shows that
\begin{equation}
\|\nabla \Phi(v_{n}(\cdot,t))\|_{L^{2}(\Omega;\mathbb{R}^N)}\le C,\qquad \mbox{ for every } n\ge 1\ \mbox{ and }t> 0,\label{estgradn}
\end{equation}
for some universal constant $C>0$.
\par
Up to now, we discarded the contribution of the time derivative in the energy inequality
\eqref{entropydissineq}. It is time to call it into play. We observe that from \eqref{uniformissima} we get the universal bound
\[
\mathcal{V}_n[v_n(t)]\ge -C_2,
\]
which is independent both of $n$ and $t$. By using this in \eqref{entropydissineq} and recalling \eqref{uBfin}, for all $T>0$ we can infer
\begin{equation}
\label{stimatempo}
\begin{split}
\frac{4\,m}{(m+1)^{2}}\,\iint_{Q_{T}}|\partial_t g(v_n)|^{2}\,dx\,
dt&=\iint_{Q_{T}}\Phi'(v_{n})\,|\partial_{t}v_{n}|^{2}\,dx\,dt\\
&\le\iint_{Q_{T}}\Phi'_n(v_{n})\,|\partial_{t}v_{n}|^{2}\,dx\,dt\leq C,
\end{split}
\end{equation}
with $C$ depending on $N, m$ and $\Omega$ only.
Thus $\{\partial_{t}g(v_n)\}_{n\in\mathbb{N}}$ is bounded in $L^{2}(Q)$.
\par
This in turn implies that $\{g(v_n)\}_{n\in\mathbb{N}}$ can be regarded a sequence of $L^2(\Omega)-$valued {\it equi-continuous curves}. Indeed, for every $t,s\in[0,+\infty)$ we have by basic Calculus, Minkowski's inequality and H\"older's inequality
\[
\begin{split}
\|g(v_n(\cdot,t))-g(v_n(\cdot,s))\|_{L^2(\Omega)}&\le \int_t^s \|\partial_\tau g(v_n(\cdot,\tau))\|_{L^2(\Omega)}\,d\tau\\
&\le |t-s|^\frac{1}{2}\,\left(\int_t^s \|\partial_\tau g(v_n(\cdot,\tau))\|^2_{L^2(\Omega)}\,d\tau\right)^\frac{1}{2}\\
&\le C\,|t-s|^\frac{1}{2}.
\end{split}
\]
We are going to show that we can apply Ascoli-Arzel\`a Theorem to the family $\{g(v_n)\}_{n\in\mathbb{N}}\subset C([0,+\infty);L^2(\Omega))$, on every time interval $[0,T]$.
\par
 At this aim, we need a uniform regularity estimate for $x\mapsto g(v_n(x,t))$. This is done as follows: we first extend by zero outside $\Omega$ the function $x\mapsto v_n(x,t)$. Then from \eqref{estgradn}, for every $t\ge0$ and every $h\in\mathbb{R}^N\setminus\{0\}$ we have
\begin{equation}
\label{aletto!}
\begin{split}
C\ge \int_\Omega |\nabla \Phi(v_n(x,t))|^2\,dx&=\int_{\mathbb{R}^N}|\nabla \Phi(v_n(x,t))|^2\,dx\\
&\ge \int_{\mathbb{R}^N} \frac{|\Phi(v_n(x+h,t))-\Phi(v_n(x,t))|^2}{|h|^2}\,dx,
\end{split}
\end{equation}
where the last inequality is a classical fact from the theory of Sobolev spaces. We now use the elementary inequality
\[
\Big||A|^\frac{m-1}{2}\,A-|B|^\frac{m-1}{2}\,B\Big|\le C_m\, \Big||A|^{m-1}\,A-|B|^{m-1}\,B\Big|^\frac{m+1}{2\,m},
\]
which follows from \eqref{egolo} in Appendix \ref{app:A}  with the choices
\[
\gamma=\frac{2\,m}{m+1},\qquad a=|A|^\frac{m-1}{2}\,A,\qquad b=|B|^\frac{m-1}{2}\,B.
\]
By recalling the definitions of $\Phi$ and $g$, this in turn implies that for every $t\ge0$ and every $h\in\mathbb{R}^N\setminus\{0\}$
\[
|\Phi(v_n(x+h,t))-\Phi(v_n(x,t))|^2\ge |g(v_n(x+h),t)-g(v_n(x,t))|^\frac{4\,m}{m+1}.
\]
Thus from \eqref{aletto!} we obtain the following {\it uniform fractional differentiability estimate}
\[
\sup_{|h|>0} \int_{\mathbb{R}^N} \left|\frac{g(v_n(x+h),t)-g(v_n(x,t))}{|h|^\frac{m+1}{2\,m}}\right|^\frac{4\,m}{m+1}\,dx\le C,
\]
for some constant $C>0$, depending on $N,\Omega$ and $m$, only. Observe that
\[
\frac{4\,m}{m+1}>2\qquad \mbox{ and }\qquad \frac{m+1}{2\,m}<1,
\]
thanks to the choice of $m$. Then we can argue as in \cite[Theorem 2.7]{BLP}. We observe that
\[
\begin{split}
\int_\Omega |g(v_n(x,t))|^\frac{4\,m}{m+1}\,dx&=\int_\Omega |\Phi(v_n(x,t))|^2\,dx\\
&\le \frac{1}{\lambda_1(\Omega)}\,\int_\Omega |\nabla \Phi(v_n(x,t))|^2\,dx\le C,
\end{split}
\]
thanks to Poincar\'e inequality and \eqref{estgradn}, which means that the sequence $\{g(v_n(\cdot,t))\}_{n\in\mathbb{N}}$ is bounded in $L^{4m/(m+1)}(\Omega)$.
The last two uniform estimates are now enough to apply the classical Riesz-Fr\'{e}chet-Kolmogorov compactness
theorem and get that, in particular, for every $T>0$ and every $t\in[0,T]$ the set
\[
\{g(v_n(\cdot,t))\}_{n\in\mathbb{N}}\subset L^2(\Omega),
\]
is relatively compact in the norm topology.
\par
We can finally apply the Banach space--valued version of the Ascoli-Arzel\`a Theorem on $[0,T]$ (see \cite[Lemma 1]{Si}) and get existence of a function $h\in C([0,T];L^2(\Omega))$ such that (up to a subsequence)
\[
\lim_{n\to\infty} \Big\|g(v_n)-h\Big\|_{C([0,T];L^2(\Omega))}=0.
\]
Observe that by arbitrariness of $T$, we actually get that $h\in C([0,+\infty);L^2(\Omega))$.
We claim that
\begin{equation}
\label{identification}
h=g(v),
\end{equation}
where $v$ is the solution of our original initial boundary value problem. In order to show this, we set $\widetilde{v}=g^{-1}(h)$. Thanks to \eqref{egolo} again, we have
\[
|v_n-\widetilde{v}|\le C_m\,|g\circ v_n-h|^\frac{2}{m+1},
\]
then by raising to the power $m+1$ and integrating in space, we get
\[
\int_\Omega |v_n(x,t)-\widetilde{v}(x,t)|^{m+1}\,dx\le C\,\int_\Omega |g(v_n(x,t))-h(x,t)|^2\,dx,
\]
hence
\begin{equation}
\label{converginterm}
\lim_{n\to\infty} 	\Big\|v_n-\widetilde{v}\Big\|_{C([0,T];L^{m+1}(\Omega))}=0.
\end{equation}
In particular, we get that $\widetilde{v}\in C([0,+\infty);L^{m+1}(\Omega))$.
This uniform convergence in turn implies, by repeating a similar argument used in the proof of Theorem \ref{teo:LP} (see \eqref{Phiconv}), that we have
\[
\lim_{n\to\infty} 	\Big\|\Phi(v_n)-\Phi(\widetilde{v})\Big\|_{C([0,T];L^{(m+1)/m}(\Omega))}=0.
\]
By recalling the uniform Sobolev estimate \eqref{estgradn}, the previous convergence and using an integration by parts, for every vector field $\phi\in C^1_0(\Omega;\mathbb{R}^N)$ and every $t\in[0,T]$ we have
\begin{equation}
\label{bof}
\begin{split}
\int_\Omega \Phi(\widetilde{v}(x,t))\,\mathrm{div\,}\phi(x)\,dx&=\lim_{n\to\infty}\int_\Omega \Phi(v_n(x,t))\,\mathrm{div\,}\phi(x)\,dx\\
&=-\lim_{n\to\infty}\int_\Omega \langle \nabla\Phi(v_n(x,t)),\phi(x)\rangle\,dx\\
&\le C\,\|\phi\|_{L^2(\Omega;\mathbb{R}^N)}.
\end{split}
\end{equation}
This shows that for every $t\in[0,T]$ the function $x\mapsto\Phi(\widetilde{v}(x,t))$ has a weak gradient in $L^2(\Omega;\mathbb{R}^N)$. Once we have proved existence of the weak gradient, from \eqref{bof} we easily get that such a gradient is the weak limit of $\{\nabla \Phi(v_n(\cdot,t))\}_{n\in\mathbb{N}}$, for every $t\in[0,T]$.
\par
Finally, by the lower semicontinuity of the $L^2$ norm with respect to the weak convergence, we get from \eqref{estgradn}
\[
\int_\Omega |\nabla \Phi(\widetilde v(x,t))|^2\,dx\le C,\qquad \mbox{ for every } t\in[0,T],
\]
for some universal $C>0$ independent of $T$, while weak closedeness of the space $W^{1,2}_0(\Omega)$ implies
\[
\Phi(\widetilde{v}(\cdot,t))\in W^{1,2}_0(\Omega),\qquad \mbox{ for every }t\in[0,T].
\]
We now have all the convergences and regularity properties needed to pass to the limit in the weak formulation of \eqref{approxim}: this permits to show that $\widetilde{v}$ solves \eqref{rescaledeq} on every $Q_T$. Then uniqueness of the solution permits to conclude that \eqref{identification} holds true.
\vskip.2cm\noindent
{\bf Step 5: taking the limit in \eqref{entropydissineq} -- LHS.} We take into account the left-hand side of \eqref{entropydissineq}.
By using \eqref{converginterm} (recall that $\widetilde{v}=v$) and
proceeding as in \eqref{convergFn}, we can then prove that
\[
\lim_{n\to\infty} \int_\Omega F_n(v_n(x,t))\,dx=\frac{m}{m+1}\,\int_\Omega |v(x,t)|^{m+1}\,dx=\frac{m}{m+1}\,\int_\Omega |\Phi(v(x,t))|\,dx.
\]
Moreover, by using \eqref{H1sci} and the weak lower semicontinuity of the Dirichlet integral, we get
\[
\liminf_{n\to\infty} \int_{\Omega}|\nabla \Phi_{n}(v_{n})|^{2}\,dx\ge \liminf_{n\to\infty}\,\int_{\Omega}|\nabla \Phi(v_n)|^{2}\,dx\ge \int_\Omega |\nabla \Phi(v)|^2\,dx,
\]
The last two displays show that
\begin{equation}
\mathcal{V}[v(\cdot,t)]\le\liminf_{n\to\infty}\mathcal{V}_{n}[v_{n}(\cdot,t)].\label{lsemicV}
\end{equation}
\vskip.2cm\noindent
{\bf Step 6: proof of \eqref{LP2}\,and \eqref{LP1}.}
The uniform estimate
\[
\sup_{t\in[0,T]}\int_{\Omega}|\nabla \Phi(v(x,t))|^{2}\,dx\leq C,
\]
has already been proved in {\bf Step 4}. Since the constant $C$ is independent of $T$, this shows the second item in \eqref{LP2}.
\par
Still from {\bf Step 4}, we know that
\[
\lim_{n\to\infty} \left\|g(v_n(\cdot,t))-g(v(\cdot,t))\right\|_{C([0,T];L^2(\Omega))}=0.
\]
This easily implies that $\{\partial_t g(v_n)\}_{n\in\mathbb{N}}$ weakly converges in $L^2(Q_T)$ to $\partial_t g(v)$.
From the weak lower semicontinuity of the $L^{2}$ norm and \eqref{stimatempo}, we can take this uniform estimate to the limit and get
\[
\frac{4\,m}{(m+1)^2}\,\iint_{Q_{T}}|\partial_t g(v)|^{2}\,dx\,
dt\leq \liminf_{n\to\infty}\iint_{Q_{T}}\Phi^{\prime}_n(v_{n})\,|\partial_{t}v_{n}|^{2}\,dx\,dt\leq C.
\]
This is the first item in \eqref{LP2}.
\par
Finally, the last formula in display, \eqref{lsemicV} and \eqref{RHS} imply that we can pass to the limit in \eqref{entropydissineq} and obtain the entropy-entropy dissipation inequality \eqref{LP1}. This concludes the proof for an open bounded set with smooth boundary.
\vskip.2cm\noindent
{\bf Step 7: removing the smoothness of $\partial\Omega$}. We assume now that $\Omega\subset\mathbb{R}^N$ is any open bounded set and argue as in the proof of  \cite[Theorem 5.7]{VaBook}. By \cite[page 319]{Ke}, $\Omega$ can be exhausted by an increasing sequence of smooth domains $\{\Omega_n\}_{n\in\mathbb{N}}$, i.\,e.
\[
\Omega_n\Subset\Omega_{n+1}\qquad \mbox{ and }\qquad \Omega=\bigcup_{n\in\mathbb{N}} \Omega_n.
\]
We then choose an increasing sequence of cut-off functions $\xi_{n}\in C_{0}^{\infty}(\Omega_{n+1})$, such that
\[
\xi_n\equiv 1 \mbox{ on }\Omega_n,\qquad \mbox{ for every } n\in\mathbb{N}.
\]
By setting $\varphi_{0,n}:=v_{0}\,\xi_{n}$, let us  consider the solution $\varphi_{n}$ to equation \eqref{rescaledeq} on the space-time cylinder $Q_{n+1}:=\Omega_{n+1}\times(0,+\infty)$, with zero boundary data on $\Sigma_{n+1}:=\partial\Omega_{n+1}\times[0,+\infty)$ and initial datum $\varphi_{0,n}$ on $\Omega_{n+1}$. By the previous step, we know that the entropy-entropy dissipation inequality \eqref{entropyentropdiss} holds for each $\varphi_n$. Up to extending $\varphi_n$ to $0$ in $Q\setminus Q_{n+1}$, we have that all the integrals in the space variable can be considered in the whole $\Omega$.
\par
The same arguments used in the previous steps shows that $\Phi(v_{n})$ converges weakly (up to a subsequence) to $\Phi(v)$, where $v$ is the actual solution to problem \eqref{rescaledeq}. Moreover, by lower semicontinuity, we can take \eqref{entropyentropdiss} to the limit. We leave the details to the reader.
\end{proof}

\begin{oss}
For every $h\in\mathbb{R}^N$ and every measurable function $\psi:\mathbb{R}^N\to\mathbb{R}$, we set
\[
\delta_h \psi(x)=\psi(x+h)-\psi(x).
\]
Then, in passing, we notice that as a consequence of the estimates in {\bf Step 4} above, we obtain that
\[
\sup_{t\in[0,+\infty)}\sup_{|h|>0} \int_{\mathbb{R}^N} \left|\frac{\delta_h g(v(x,t))}{|h|^\beta}\right|^q\,dx<+\infty,
\]
with
\[
\beta=\frac{m+1}{2\,m}\qquad \mbox{ and }\qquad q=\frac{4\,m}{m+1}.
\]
This can be regarded as a spatial regularity estimate on the scale of fractional Sobolev spaces, uniform in time. For finer higher differentiabilty results for the solutions of PME--type equations, we refer to \cite{Eb, GST}  and \cite{TT}.
\end{oss}

\section{Proof of the main results}
\label{sec:6}

\begin{proof}[Proof of Theorem \ref{teo:main}]
The assumption $\Phi(u_0)\in W^{1,2}_0(\Omega)$ entails that there exists a unique weak solution $u$, thanks to Theorem \ref{existence}. Moreover, we also have that
\[
u\in L^\infty([0,+\infty);L^{m+1}(\Omega)),\qquad \Phi(u)\in L^2([0,+\infty);W^{1,2}_0(\Omega)),
\]
and
\[
u\in C([0,+\infty);L^{m+1}(\Omega)).
\]
We define the rescaled function $v$ solving problem \eqref{rescaledeq}. Then we have
\[
v\in L^\infty_{\rm loc}([0,+\infty);L^{m+1}(\Omega)),\qquad \Phi(v)\in L^2_{\rm loc}([0,+\infty);W^{1,2}_0(\Omega)),
\]
and
\[
v\in C([0,+\infty);L^{m+1}(\Omega)).
\]
We introduce the shortcut notation
\[
q=\frac{m+1}{m}\in (1,2),
\]
as in the statement. Accordingly, we define the functional $\mathcal{V}$ as in the proof of Proposition \ref{prop:stimecruciali}
\[
\mathcal{V}[v(\cdot,t)]:=\mathfrak{F}_{q,\alpha}(\Phi(v(\cdot,t)))=\frac{1}{2}\,\int_\Omega |\nabla\Phi(v(x,t))|^2\,dx-\frac{\alpha\,m}{m+1}\,\int_\Omega |\Phi(v(x,t))|^\frac{m+1}{m}\,dx.
\]
By Proposition \ref{prop:stimecruciali} and the compact embedding $W_{0}^{1,2}(\Omega)\hookrightarrow L^{\frac{m+1}{m}}(\Omega)$, we get that the orbit
\[
\Big\{v(\cdot,t):\,t\geq0\Big\},
\]
is relatively compact in $L^{m+1}(\Omega)$. Thus,  by \cite[Theorems 1.4.5 \& 1.4.7]{Chueshov} the $\omega-$limit set $\omega(u_{0})$ is nonempty, compact and connected. Moreover, Proposition \ref{prop:stimecruciali} guarantees that we can apply Theorem \ref{teo:LP}.
Then, if $\psi\in \omega(u_0)$, we know that $\Phi(\psi)\in W^{1,2}_0(\Omega)$ and it weakly solves
\begin{equation}
\label{LEstrange}
-\Delta \Phi(\psi)=\alpha\,|\Phi(\psi)|^{q-2}\,\Phi(\psi),\qquad \mbox{ in }\Omega.
\end{equation}
We now want to estimate the energy of $\Phi(\psi)$. By definition of $\omega-$limit, we can infer existence of a diverging sequence of times $\{t_n\}_{n\in\mathbb{N}}$ such that
\[
\lim_{n\to\infty} \|v(\cdot,t_n)-\psi\|_{L^{m+1}(\Omega)}=0.
\]
We recall that this implies \eqref{Phiconv}, that is
\[
\lim_{n\to\infty} \|\Phi(v(\cdot,t_n))-\Phi(\psi)\|_{L^\frac{m+1}{m}(\Omega)}=0.
\]
 Thus, by lower semicontinuity\footnote{We use that the Dirichlet integral is weakly lower semicontinuous with respect to the strong $L^{(m+1)/m}(\Omega)$ convergence. Indeed, it is sufficient to write
\[
\begin{split}
\left(\int_\Omega |\nabla \varphi|^2\,dx\right)^\frac{1}{2}&=\sup_{\phi\in C^\infty_0(\Omega;\mathbb{R}^N)}\left\{ \int_\Omega \langle \nabla \varphi,\phi\rangle\,dx\, :\, \|\phi\|_{L^2(\Omega;\mathbb{R}^N)}\le 1\right\}\\
&=\sup_{\phi\in C^\infty_0(\Omega;\mathbb{R}^N)}\left\{ \int_\Omega \varphi\,\mathrm{div\,}\phi\,dx\, :\, \|\phi\|_{L^2(\Omega;\mathbb{R}^N)}\le 1\right\},
\end{split}
\]
and then observe that each $\varphi\mapsto \int_\Omega \varphi\,\mathrm{div\,}\phi\,dx$ is continuous with respect to the strong $L^{(m+1)/m}(\Omega)$ convergence.} we get
\[
\liminf_{n\to +\infty}\mathcal{V}[v(\cdot, t_n)]\ge\left[\frac{1}{2}\,\int_\Omega |\nabla \Phi(\psi)|^2\,dx-\frac{\alpha\,m}{m+1}\,\int_\Omega |\Phi(\psi)|^\frac{m+1}{m}\,dx\right]=\mathfrak{F}_{q,\alpha}(\Phi(\psi)).
\]
By Proposition \ref{prop:stimecruciali} again, we also know that
\[
\mathcal{V}[v(\cdot,t)]\le \mathcal{V}[u_0],\qquad \mbox{ for every }t\ge 0.
\]
The last two estimates and the assumption \eqref{condizione} finally entail that
\begin{equation}
\mathfrak{F}_{q,\alpha}(\Phi(\psi))<\Lambda_2.\label{minener}
\end{equation}
However, by \eqref{LEstrange} we have that $\Phi(\psi)$ is a critical point of $\mathfrak{F}_{q,\alpha}$. Thus \eqref{minener} and Proposition \ref{prop:dasolo} imply that $\Phi(\psi)$ is a minimizer of $\mathfrak{F}_{q,\alpha}$.
In view of Proposition \ref{prop:min}, we must have that
\[
\Phi(\psi)\in\{w,-w\},
\]
where $w$ and $-w$ are the unique minimizers of $\mathfrak{F}_{q,\alpha}$.
By using that the $\omega-$limit $\omega(u_{0})$ is a connected set, while $\{w,-w\}$ is obviously disconnected, we get the full convergence of $v(\cdot,t)$ either to $\Phi^{-1}(w)$ or to $\Phi^{-1}(-w)$.
\par
This in turn implies the claimed convergence of $t^\alpha\,u(\cdot,t)$ with respect to the $L^{m+1}(\Omega)$ strong topology. In order to upgrade this to a uniform convergence, it is now sufficient to reproduce the argument of \cite[Chapter 20, page 526]{VaBook}. We leave the details to the reader.
\end{proof}
\begin{oss}
As pointed out to us by an anonymous referee, the previous result implies that
$t\mapsto \Phi(v(\cdot,t))$ converges in the strong $W_{0}^{1,2}(\Omega)$ topology, as well.
\par
We briefly sketch the argument: we start with the following estimate\footnote{These can be justified by an approximation argument, as in the proof of Proposition \ref{prop:stimecruciali}: multiply both sides of the equation \eqref{approxim} by $\Phi_n(v_n)$, integrate over $(t,t+1)$ and then pass to the limit as $n$ goes to $\infty$.}, which is valid for every time instant $t>0$
\[
\begin{split}
\frac{1}{q}&\left(\int_{\Omega}|v(x,t)|^{m+1}dx-\int_{\Omega}|v(x,t+1)|^{m+1}dx\right)\ge \int_{t}^{t+1}\!\!\int_{\Omega}\Big[|\nabla\Phi(v(x,s))|^{2}-\alpha\,|v(x,s)|^{m+1}\Big]\,dx\,ds.
\end{split}
\]
This can be recast into
\begin{equation}
\label{w012converg}
\int_{t}^{t+1}\mathfrak{F}_{q,\alpha}(\Phi(v(\cdot,s)))\,ds\le \frac{1}{2\,q}\left(\int_{\Omega}|v(x,t)|^{m+1}dx-\int_{\Omega}|v(x,t+1)|^{m+1}dx\right)+\mathcal{R}(t),
\end{equation}
where
\[
\mathcal{R}(t)=\alpha\left(\frac{1}{2}-\frac{1}{q}\right)\int_{t}^{t+1}\int_{\Omega}|v(x,s)|^{m+1}dx\,ds.
\]
From the convergence of $v$ obtained in the proof of Theorem \ref{teo:main}, we get that the right-hand side of \eqref{w012converg} admits a limit as $t$ goes to $+\infty$, given by
\[
\alpha\,\left(\frac{1}{2}-\frac{1}{q}\right)\int_{\Omega}|w|^{m+1}dx=\mathfrak{F}_{q,\alpha}(w)=\Lambda_1.
\]
We also used \eqref{ecrit} and the minimality of $w$. This entails that we have
\[
\limsup_{t\to+\infty}\int_{t}^{t+1}\mathfrak{F}_{q,\alpha}(\Phi(v(\cdot,s)))\,ds\le \Lambda_1.
\]
On the other hand, still by minimality, we have
\[
\mathfrak{F}_{q,\alpha}(\Phi(v(\cdot,s)))\ge \Lambda_1,\qquad \mbox{ for every }s\ge 0,
\]
and thus we get existence of the limit for the left-hand side of \eqref{w012converg}.
As observed in the Introduction, we have that the map $t\rightarrow \mathfrak{F}_{q,\alpha}(\Phi(v(\cdot,t)))$ is decreasing: thus from the previous limit one also has
\[
\lim_{t\rightarrow+\infty}\mathfrak{F}_{q,\alpha}(\Phi(v(\cdot,t)))=\Lambda_1.
\]
An application of Lemma \ref{lm:minseq} now gives the desired conclusion.
\end{oss}

\begin{proof}[Proof of Proposition \ref{prop:seleziona}]
We use the notation of the proof of Theorem \ref{teo:main}. We argue by contradiction and assume that
\[
\lim_{t\to+\infty} \|t^\alpha u(\cdot,t)-\Phi^{-1}(-w)\|_{L^\infty(\Omega)}=0.
\]
By passing to the rescaled function $v$, this is the same as
\[
\lim_{t\to+\infty} \|v(\cdot,t)-\Phi^{-1}(-w)\|_{L^\infty(\Omega)}=0.
\]
By composing with the function $s\mapsto\Phi(s)$, we immediately have
\[
\lim_{t\to+\infty} \|\Phi(v(\cdot,t))-(-w)\|_{L^\infty(\Omega)}=0,
\]
as well.
Both the assumptions \eqref{vaialpositivo} and \eqref{rivaialpositivo} guarantee that
\[
\max\Big\{\mathfrak{F}_{q,\alpha}\left(\Phi(u_0^+)\right), \mathfrak{F}_{q,\alpha}\left(\Phi(u_0)\right)\Big\}<\Lambda_2.
\]
Thus if we apply  Lemma \ref{lm:mistero}, we can build a continuous path $\gamma:[0,1]\to W^{1,2}_0(\Omega)$ such that
\[
\gamma(0)=w,\qquad \gamma(1)=\Phi(u_0),\qquad \mathfrak{F}_{q,\alpha}(\gamma(t))<\Lambda_2,\ \mbox{ for every } t\in [0,1].
\]
Moreover, we have seen by the entropy-entropy dissipation inequality \eqref{LP1} that
\[
\mathfrak{F}_{q,\alpha}(\Phi(v(\cdot,t)))<\Lambda_2,\qquad \mbox{ for every } t\in[0,+\infty),
\]
and recall that $v\in C([0,+\infty);L^{m+1}(\Omega))$, with initial datum $u_0$. The uniform bound on the energy, together with \eqref{coercive} and the Sobolev-Poincar\'e inequality \eqref{sobolevpoincare}, yields the uniform estimate
\begin{equation}
\label{uniformeLm}
\int_\Omega |v(x,t)|^{m+1}\,dx=\int_\Omega \Big|\Phi(v(x,t))\Big|^\frac{m+1}{m}\,dx\le C,\qquad \mbox{ for every }t\ge 0.
\end{equation}
If we now use \eqref{egolo0},
we get
\[
\begin{split}
\int_\Omega& \Big|\Phi(v(x,t))-\Phi(v(x,s))\Big|^\frac{m+1}{m}\,dx\le C_m\int_\Omega (|v(t,x)|^{m-1}+|v(s,x)|^{m-1})^\frac{m+1}{m}\,|v(t,x)-v(s,x)|^\frac{m+1}{m}\,dx.
\end{split}
\]
By using H\"older's inequality with exponents $m$ and $m/(m-1)$ and the uniform bound \eqref{uniformeLm}, we get
\[
\Big\|\Phi(v(\cdot,t))-\Phi(v(\cdot,s))\Big\|_{L^\frac{m+1}{m}(\Omega)}\le C\,\|v(t,\cdot)-v(s,\cdot)\|^\frac{m+1}{m}_{L^{m+1}(\Omega)}.
\]
This finally gives that $\Phi(v)\in C([0,+\infty);L^\frac{m+1}{m}(\Omega))$.
\par
We can then define the continuous path
\[
\theta(t)=\left\{\begin{array}{ll}
\gamma(2\,t),& \mbox{ if } t\in \left[0,\dfrac{1}{2}\right],\\
&\\
\Phi(v(\cdot,2\,t-1)),& \mbox{ if } t\in \left[\dfrac{1}{2},+\infty\right),
\end{array}
\right.
\]
connecting $w$ and $-w$. Continuity here is intended with respect to the strong $L^\frac{m+1}{m}(\Omega)$ topology.
Thus, by construction, we have
\[
\theta\in \widetilde\Gamma_\infty=\Big\{\gamma\in C([0,+\infty);L^\frac{m+1}{m}(\Omega))\, :\, \mathrm{Im}(\gamma)\subset W^{1,2}_0(\Omega),\,\gamma(0)=w,\, \gamma(+\infty)=-w\Big\},
\]
and
\[
\sup_{\varphi\in \mathrm{Im}(\theta)}\mathfrak{F}_{q,\alpha}(\varphi)<\Lambda_2.
\]
Theorem \ref{teo:MP} now yields $\Lambda^*<\Lambda_2$,
which contradicts Proposition \ref{prop:nocollapse}.
\end{proof}

\appendix

\section{Modulus of continuity of some auxiliary functions}

\label{app:A}

\begin{lm}
\label{lm:potenzeconvesse}
Let $\gamma>1$, then for every $a,b\in\mathbb{R}$ we have
\begin{equation}
\label{egolo0}
\Big||a|^{\gamma-1}\,a-|b|^{\gamma-1}\,b\Big|\le \gamma\,\Big(|a|^{\gamma-1}+|b|^{\gamma-1}\Big)\,|a-b|,
\end{equation}
and
\begin{equation}
\label{egolo}
|a-b|\le 2^\frac{\gamma-1}{\gamma}\,\Big||a|^{\gamma-1}\,a-|b|^{\gamma-1}\,b\Big|^\frac{1}{\gamma}.
\end{equation}
\end{lm}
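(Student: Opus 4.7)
The plan is to prove both inequalities by a simple case analysis based on the signs of $a$ and $b$, exploiting the fact that the map $f(t)=|t|^{\gamma-1}\,t$ is odd and strictly increasing on $\mathbb{R}$ with $f'(t)=\gamma\,|t|^{\gamma-1}$, so that the genuinely informative case is when $a,b$ are on the same side of $0$.

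For \eqref{egolo0}, I would reduce by symmetry (both sides depend only on $|a|,|b|$ via $f$) to two cases. If $ab\ge 0$, I may assume $a\ge b\ge 0$, and then the standard estimate
\[
a^{\gamma}-b^{\gamma}=\int_{b}^{a}\gamma\,t^{\gamma-1}\,dt\le \gamma\,a^{\gamma-1}\,(a-b)\le \gamma\bigl(a^{\gamma-1}+b^{\gamma-1}\bigr)(a-b)
\]
concludes. If $ab<0$, I may assume $a>0>b$ and set $B=|b|$; the inequality becomes
\[
a^{\gamma}+B^{\gamma}\le \gamma\bigl(a^{\gamma-1}+B^{\gamma-1}\bigr)(a+B)=\gamma\bigl(a^{\gamma}+B^{\gamma}+a^{\gamma-1}B+aB^{\gamma-1}\bigr),
\]
which is immediate since $\gamma\ge 1$ and the additional cross terms $a^{\gamma-1}B+aB^{\gamma-1}$ are non-negative.

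For \eqref{egolo}, the cleanest path is to prove the equivalent form
\[
|a-b|^{\gamma}\le 2^{\gamma-1}\,\bigl||a|^{\gamma-1}a-|b|^{\gamma-1}b\bigr|,
\]
again by splitting on the sign of $ab$. When $ab\ge 0$, WLOG $a\ge b\ge 0$, one uses the superadditivity of $t\mapsto t^{\gamma}$ on $[0,\infty)$ for $\gamma\ge 1$, namely $(x+y)^{\gamma}\ge x^{\gamma}+y^{\gamma}$, applied with $x=a-b$ and $y=b$, to obtain $(a-b)^{\gamma}\le a^{\gamma}-b^{\gamma}$, which is even stronger than needed (the constant $2^{\gamma-1}$ is not used here). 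When $ab<0$, WLOG $a>0>b$ and $B=|b|$; the claim becomes $(a+B)^{\gamma}\le 2^{\gamma-1}\bigl(a^{\gamma}+B^{\gamma}\bigr)$, which follows from the convexity of $t\mapsto t^{\gamma}$ via Jensen's inequality applied to the midpoint of $a$ and $B$.

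There is no real obstacle: both steps are elementary one-variable convexity computations, and the constant $2^{\gamma-1}$ in \eqref{egolo} is exactly what is dictated by the opposite-sign case. The only mild care point is remembering to use superadditivity (rather than convexity) in the same-sign case of \eqref{egolo}, so that the estimate is not wasted on a trivial $2^{\gamma-1}$ factor there.
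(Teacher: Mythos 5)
Your proof is correct. For \eqref{egolo0} the paper does essentially the same thing but packaged differently: it applies the Mean Value Theorem once on the full interval $[b,a]$ (allowing it to straddle $0$) and then bounds $|\xi|^{\gamma-1}\le|a|^{\gamma-1}+|b|^{\gamma-1}$ in a single stroke, whereas you split into same-sign and opposite-sign cases; the substance is the same. For \eqref{egolo} your route is genuinely different and more self-contained. The paper simply asserts, as a known fact, that $\tau\mapsto|\tau|^{1/\gamma-1}\tau$ is $1/\gamma$-H\"older with constant $2^{(\gamma-1)/\gamma}$, and then substitutes $\tau=|a|^{\gamma-1}a$, $\eta=|b|^{\gamma-1}b$; this is clean but hides the actual work inside an unproved auxiliary inequality that is, up to renaming variables, equivalent to what one wants to show. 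Your argument by contrast proves the equivalent form $|a-b|^\gamma\le 2^{\gamma-1}\big||a|^{\gamma-1}a-|b|^{\gamma-1}b\big|$ from scratch, using superadditivity of $t\mapsto t^\gamma$ on $[0,\infty)$ in the same-sign case and midpoint convexity in the opposite-sign case, and your remark that the constant $2^{\gamma-1}$ is forced only by the opposite-sign case (equality at $a=-b$) is a nice point that the paper does not make explicit. Both approaches buy you the lemma; yours is slightly longer but leaves no gap for the reader to fill in.
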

\begin{proof}
Inequality \eqref{egolo0} follows from the Mean Value Theorem. Indeed, we have (by assuming for simplicity that $b<a$)
\[
\Big||a|^{\gamma-1}\,a-|b|^{\gamma-1}\,b\Big|= \gamma\,|\xi|^{\gamma-1}\,|a-b|,
\]
for some $\xi\in [b,a]$. It is now sufficient to use that
\[
|\xi|^{\gamma-1}\le |a|^{\gamma-1}+|b|^{\gamma-1},\qquad \mbox{ for every } \xi\in [b,a].
\]
In order to prove \eqref{egolo}, we observe that the map
\[
\tau\mapsto|\tau|^{\frac{1}{\gamma}-1}\,\tau,
\]
is $1/\gamma-$H\"older continuous. More precisely, we have
\[
\left||\tau|^{\frac{1}{\gamma}-1}\,\tau-|\eta|^{\frac{1}{\gamma}-1}\,\eta\right|\le 2^\frac{\gamma-1}{\gamma}\,|\tau-\eta|^\frac{1}{\gamma}.
\]
We use this estimate with the choices
\[
\tau=|a|^{\gamma-1}\,a\qquad \mbox{ and }\qquad \eta=|b|^{\gamma-1}\,b,
\]
so to get
\[
|a-b|\le 2^\frac{\gamma-1}{\gamma}\,\Big||a|^{\gamma-1}\,a-|b|^{\gamma-1}\,b\Big|^\frac{1}{\gamma},
\]
as desired.
\end{proof}
For $\delta>0$ and $m>1$, we consider the monotone increasing function $\Phi_\delta:\mathbb{R}\to\mathbb{R}$ defined by
\[
\Phi_\delta(s)=m\,\int_0^s \left(\delta+\tau^{2}\right)^{\frac{m-1}{2}}\,d\tau,\qquad \mbox{ for } s\in\mathbb{R},
\]
which we used in Section \ref{sec:5}.
We indicate by $\Psi_\delta$ its inverse function, which is still monotone increasing.
\begin{lm}
\label{lm:techno}
Let $\delta>0$ and $m>1$, then we have
 \[
|\Psi_\delta(a)-\Psi_\delta(b)|\le 2^\frac{m-1}{m}\,|a-b|^\frac{1}{m},\qquad \mbox{ for every } a,b\in\mathbb{R}.
\]
\end{lm}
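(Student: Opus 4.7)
The plan is to reduce the inequality to the already-established estimate \eqref{egolo} of Lemma \ref{lm:potenzeconvesse}, applied with $\gamma=m$, by comparing $\Phi_\delta$ with the unregularized nonlinearity $\Phi(s)=|s|^{m-1}s$.

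The starting point is the pointwise comparison between the derivatives: since $(\delta+\tau^2)^{(m-1)/2}\ge (\tau^2)^{(m-1)/2}=|\tau|^{m-1}$ for every $\tau\in\mathbb{R}$, we have
\[
\Phi_\delta'(\tau)=m\,(\delta+\tau^2)^{(m-1)/2}\ge m\,|\tau|^{m-1}=\Phi'(\tau),\qquad \mbox{ for every }\tau\in\mathbb{R}.
\]
As both $\Phi_\delta$ and $\Phi$ are odd and vanish at the origin, integrating this pointwise inequality on an interval with endpoints $s,t\in\mathbb{R}$ (and assuming without loss of generality that $s\ge t$) gives
\[
\big|\Phi_\delta(s)-\Phi_\delta(t)\big|=\int_t^s \Phi_\delta'(\tau)\,d\tau\ge \int_t^s \Phi'(\tau)\,d\tau=\big|\Phi(s)-\Phi(t)\big|.
\]

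The second step is a change of variables: setting $s=\Psi_\delta(a)$ and $t=\Psi_\delta(b)$, so that $\Phi_\delta(s)=a$ and $\Phi_\delta(t)=b$, the previous inequality becomes
\[
|a-b|\ge \big|\,|\Psi_\delta(a)|^{m-1}\,\Psi_\delta(a)-|\Psi_\delta(b)|^{m-1}\,\Psi_\delta(b)\big|.
\]
Finally, applying inequality \eqref{egolo} of Lemma \ref{lm:potenzeconvesse} with $\gamma=m$ and with the choices $a\leadsto \Psi_\delta(a)$, $b\leadsto \Psi_\delta(b)$, one gets
\[
|\Psi_\delta(a)-\Psi_\delta(b)|\le 2^{\frac{m-1}{m}}\,\big|\,|\Psi_\delta(a)|^{m-1}\,\Psi_\delta(a)-|\Psi_\delta(b)|^{m-1}\,\Psi_\delta(b)\big|^{\frac{1}{m}}\le 2^{\frac{m-1}{m}}\,|a-b|^{\frac{1}{m}},
\]
which is the claimed estimate. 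There is no real obstacle here: the only mildly delicate point is to notice that the same modulus of continuity for $\Phi^{-1}$ (which is the ``singular'' limit of $\Psi_\delta$ as $\delta\to 0^+$) transfers unchanged to $\Psi_\delta$, and this is exactly what the derivative comparison $\Phi_\delta'\ge \Phi'$ encodes; the constant $2^{(m-1)/m}$ is then inherited directly from \eqref{egolo}, with no dependence on $\delta$.
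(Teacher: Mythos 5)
Your proof is correct and essentially the same as the paper's: both rest on the derivative comparison $\Phi_\delta'\ge\Phi'$ and an application of inequality \eqref{egolo} with $\gamma=m$. The only cosmetic difference is that you perform the substitution $a\leadsto\Psi_\delta(a)$, $b\leadsto\Psi_\delta(b)$ before invoking \eqref{egolo}, whereas the paper does it afterward.
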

\begin{proof}
If $a=b$ there is nothing to prove. Without loss of generality, we can suppose that $a>b$, thanks to the monotonicity of $\Psi_\delta$. We first observe that
\[
\begin{split}
\Phi'_\delta(a)&=m\,(\delta+a^2)^\frac{m-1}{2}\ge m\,|a|^{m-1}.
\end{split}
\]
By using this estimate, for every $a>b$ we have
\[
|a|^{m-1}\,a-|b|^{m-1}\,b=m\,\int_b^a |\tau|^{m-1}\,d\tau\le \int_b^a \Phi'_\delta(\tau)\,d\tau=\Big(\Phi_\delta(a)-\Phi_\delta(b)\Big).
\]
By combining this with \eqref{egolo}, we obtain
\[
|a-b|\le 2^\frac{m-1}{m}\,|\Phi_\delta(a)-\Phi_\delta(b)|^\frac{1}{m}.
\]
If we now replace $a$ with $\Psi_\delta(a)$ and $b$ with $\Psi_\delta(b)$, the last estimate implies the desired result.
\end{proof}
\begin{lm}
\label{lm:techno2}
Let $\delta>0$ and $m>1$, we set
\[
F_\delta(s)=\int_0^s \tau\,\Phi'_\delta(\tau)\,d\tau=\frac{m}{m+1}\,(\delta+s^2)^\frac{m+1}{2}-\frac{m}{m+1}\,\delta^\frac{m+1}{2},\qquad \mbox{ for } s\in\mathbb{R}.
\]
Then we have
\[
|F_\delta(a)-F_\delta(b)|\le m\,\Big((\delta+a^2)^\frac{m}{2}+(\delta+b^2)^\frac{m}{2}\Big)\,|a-b|,\qquad \mbox{ for every } a,b\in\mathbb{R}.
\]
\end{lm}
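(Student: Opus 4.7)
The plan is to use the fundamental theorem of calculus, writing $F_\delta(a) - F_\delta(b) = \int_b^a F'_\delta(\tau)\,d\tau$ (assuming without loss of generality $a \ge b$), so that
\[
|F_\delta(a) - F_\delta(b)| \le \int_b^a |F'_\delta(\tau)|\,d\tau.
\]
By the very definition of $F_\delta$, we have $F'_\delta(\tau) = \tau\,\Phi'_\delta(\tau) = m\,\tau\,(\delta+\tau^2)^{(m-1)/2}$.

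The key pointwise bound I would establish is
\[
|F'_\delta(\tau)| \le m\,(\delta+\tau^2)^{m/2},\qquad \mbox{for every } \tau\in\mathbb{R},
\]
which follows from the trivial inequality $|\tau| \le (\delta+\tau^2)^{1/2}$ multiplied by $(\delta+\tau^2)^{(m-1)/2}$. Once this is in place, for every $\tau$ lying between $a$ and $b$ one has $\tau^2\le \max\{a^2,b^2\}$, and hence
\[
(\delta+\tau^2)^{m/2} \le \max\left\{(\delta+a^2)^{m/2},\,(\delta+b^2)^{m/2}\right\}\le (\delta+a^2)^{m/2}+(\delta+b^2)^{m/2}.
\]

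Plugging this uniform bound back into the integral and using $\int_b^a d\tau = |a-b|$ would yield the desired inequality immediately. There is no real obstacle here: the statement is a routine Lipschitz-type estimate, and the only mild subtlety is choosing to bound $|\tau|$ by $(\delta+\tau^2)^{1/2}$ rather than by $|a|+|b|$, so that the two powers combine cleanly into $(\delta+\tau^2)^{m/2}$ and the constant $m$ on the right-hand side is obtained with the correct sharp value.
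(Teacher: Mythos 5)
Your proof is correct and follows essentially the same line as the paper: both reduce the statement to a pointwise bound $|\tau\,\Phi'_\delta(\tau)|\le m\,(\delta+\tau^2)^{m/2}$ on the derivative $F'_\delta$, the paper via the Mean Value Theorem, you via the fundamental theorem of calculus plus an integral estimate. The only small divergence is in controlling the intermediate point: the paper invokes the monotonicity of $s\mapsto s\,\Phi'_\delta(s)$ to bound $|\xi\,\Phi'_\delta(\xi)|$ by $|a\,\Phi'_\delta(a)|+|b\,\Phi'_\delta(b)|$, whereas you simply observe $\tau^2\le\max\{a^2,b^2\}$ for $\tau$ between $a$ and $b$, which is a touch more elementary and avoids having to verify that $F'_\delta$ is increasing.
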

\begin{proof}
For $a=b$, there is nothing to prove. Without loss of generality, we can assume that $a>b$. By the Mean Value Theorem, there exists $\xi\in[b,a]$ such that
\[
|F_\delta(a)-F_\delta(b)|=|\xi\,\Phi'_\delta(\xi)|\,|a-b|.
\]
We now observe that the function $s\mapsto t\,\Phi'_\delta(s)$ is monotone increasing, thus we get
\[
b\,\Phi_\delta'(b)\le \xi\,\Phi'_\delta(\xi)\le a\,\Phi'_\delta(b),\qquad \mbox{ for every }b\le \xi\le a.
\]
This in turn entails that
\[
|\xi\,\Phi'_\delta(\xi)|\le |b\,\Phi'_\delta(b)|+|a\,\Phi'_\delta(a)|,\qquad \mbox{ for every }b\le \xi\le a.
\]
Moreover, we have
\[
|a\,\Phi_\delta'(a)|\le m\,(\delta+a^2)^\frac{m}{2}.
\]
By using the last two inequalities in the initial identity, we get the desired conclusion.
\end{proof}

\section{Continuous paths with controlled energy}

The following technical result permits to construct particular paths with controlled energy. We used this for the proof of Proposition \ref{prop:seleziona}. We still denote by $\mathfrak{F}_{q,\alpha}$ the functional \eqref{functionalF}, whose unique positive minimizer has been denoted by $w$.
\begin{lm}
\label{lm:mistero}
Let $1<q<2$ and $\alpha>0$. Let $\Omega\subset\mathbb{R}^N$ be an open bounded connected set. For every $\varphi\in W^{1,2}_0(\Omega)$, there exists a continuous path $\gamma:[0,1]\to W^{1,2}_0(\Omega)$ such that
\begin{enumerate}
\item $\gamma(0)=w$ and $\gamma(1)=\varphi$;
\vskip.2cm
\item $\mathfrak{F}_{q,\alpha}(\gamma(t))\le \max\Big\{\mathfrak{F}_{q,\alpha}(\varphi^+),\, \mathfrak{F}_{q,\alpha}(\varphi)\Big\}$, for every $t\in[0,1]$.
\end{enumerate}
\end{lm}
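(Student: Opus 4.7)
The plan is to build $\gamma$ as the concatenation of two continuous paths in $W^{1,2}_0(\Omega)$: a first piece going from $w$ to $\varphi^+$, and a second piece going from $\varphi^+$ to $\varphi$. Since $\mathfrak{F}_{q,\alpha}(w) = \Lambda_1 \le \mathfrak{F}_{q,\alpha}(\varphi^+)$, it will suffice to control the energy of the first piece by $\mathfrak{F}_{q,\alpha}(\varphi^+)$ and that of the second by $\max\{\mathfrak{F}_{q,\alpha}(\varphi^+),\mathfrak{F}_{q,\alpha}(\varphi)\}$.

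\textbf{Step 1: from $w$ to $\varphi^+$ via a hidden convexity curve.} I would define
\[
\sigma(t) := \sqrt{(1-t)\,w^2 + t\,(\varphi^+)^2}, \qquad t\in[0,1],
\]
so that $\sigma(0)=w$ and $\sigma(1)=\varphi^+$. Since $w>0$ in $\Omega$ (Proposition \ref{prop:min}), $\sigma(t)>0$ for every $t<1$, and the chain rule together with a pointwise Cauchy--Schwarz inequality gives
\[
|\nabla \sigma(t)|^{2} = \left|\frac{(1-t)\,w\,\nabla w + t\,\varphi^+\,\nabla \varphi^+}{\sigma(t)}\right|^{2} \le (1-t)\,|\nabla w|^{2} + t\,|\nabla \varphi^+|^{2}.
\]
Simultaneously, since $q/2<1$, subadditivity of $s\mapsto s^{q/2}$ yields $\sigma(t)^{q}\ge (1-t)\,w^{q}+t\,(\varphi^+)^{q}$. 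Integrating and combining,
\[
\mathfrak{F}_{q,\alpha}(\sigma(t)) \le (1-t)\,\mathfrak{F}_{q,\alpha}(w) + t\,\mathfrak{F}_{q,\alpha}(\varphi^+) \le \mathfrak{F}_{q,\alpha}(\varphi^+).
\]
Strong $W^{1,2}_0$-continuity of $t\mapsto\sigma(t)$ would be obtained by combining pointwise a.e.\ convergence $\sigma(t_n)\to\sigma(t)$, the above uniform gradient bound (giving weak $W^{1,2}_0$ convergence along subsequences), and the $\limsup$ estimate $\limsup_n \int|\nabla\sigma(t_n)|^{2}\le \int|\nabla\sigma(t)|^{2}$ (obtained by reapplying the Cauchy--Schwarz bound along $\sigma$ near $t$), together with weak lower semicontinuity.

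\textbf{Step 2: from $\varphi^+$ to $\varphi$ via a linear interpolation.} I would set $\tau(t) := \varphi^+ - t\,\varphi^-$ for $t\in[0,1]$. Since $\varphi^+$ and $\varphi^-$ have disjoint supports,
\[
h(t) := \mathfrak{F}_{q,\alpha}(\tau(t)) = \mathfrak{F}_{q,\alpha}(\varphi^+) + \frac{t^{2}}{2}\int_\Omega|\nabla\varphi^-|^{2}\,dx - \frac{\alpha\,t^{q}}{q}\int_\Omega(\varphi^-)^{q}\,dx.
\]
The derivative $h'(t)=t^{q-1}\bigl(t^{2-q}\int|\nabla\varphi^-|^{2}\,dx-\alpha\int(\varphi^-)^{q}\,dx\bigr)$ vanishes at most at one point $t_*>0$, so $h$ is either monotone or strictly $U$-shaped on $[0,+\infty)$. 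In either case $h(t)\le\max\{h(0),h(1)\}=\max\{\mathfrak{F}_{q,\alpha}(\varphi^+),\mathfrak{F}_{q,\alpha}(\varphi)\}$ for $t\in[0,1]$, and strong $W^{1,2}_0$-continuity of $\tau$ is immediate.

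\textbf{Concatenation and main obstacle.} One then obtains $\gamma$ by concatenating and reparametrizing $\sigma$ and $\tau$, with matching value $\varphi^+$ at the joining time. The main technical point is the strong $W^{1,2}_0$-continuity of $\sigma$, in particular as $t\to 1^-$, since $\varphi^+$ may vanish on a set of positive measure: there $\sigma(t)\to 0$ and the formula for $\nabla\sigma(t)$ becomes delicate. This is handled by splitting the integrals over $\{\varphi^+>0\}$ (where one has pointwise convergence of $\nabla\sigma(t)$) and $\{\varphi^+=0\}$ (where $\nabla\varphi^+=0$ a.e.\ and one controls $\nabla\sigma(t)$ by $\sqrt{1-t}\,|\nabla w|$, which vanishes as $t\to 1$). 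The degenerate sub-case $\varphi^+\equiv 0$, in which $\sigma(t)=\sqrt{1-t}\,w$, is checked directly using $\|\nabla w\|_{L^{2}}^{2}=\alpha\,\|w\|_{L^{q}}^{q}$ to obtain $\mathfrak{F}_{q,\alpha}(\sigma(t))\le 0=\mathfrak{F}_{q,\alpha}(\varphi^+)$ for all $t\in[0,1]$.
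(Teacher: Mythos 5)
Your argument is correct and reaches the same conclusion as the paper, but your Step 1 uses a different interpolation curve. The paper connects $w$ to $\varphi^+$ along $\sigma(t)=\big((1-t)\,w^q+t\,(\varphi^+)^q\big)^{1/q}$, and invokes the hidden convexity of the Dirichlet integral along $q$-power interpolations (citing Kawohl and \cite[Prop.~2.6]{BFK}); the advantage is that the $L^q$-term is then \emph{exactly} affine in $t$, with no further inequality needed. You instead use the quadratic interpolation $\sigma(t)=\big((1-t)\,w^2+t\,(\varphi^+)^2\big)^{1/2}$, for which the Dirichlet convexity reduces to an elementary pointwise Cauchy--Schwarz bound, at the price of an additional comparison for the $L^q$-term: you need $\sigma(t)^q\ge (1-t)\,w^q+t\,(\varphi^+)^q$. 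That inequality is indeed true, but it comes from the \emph{concavity} of $s\mapsto s^{q/2}$ on $[0,\infty)$ (i.e.\ $((1-t)a+tb)^{q/2}\ge(1-t)a^{q/2}+t\,b^{q/2}$), not from ``subadditivity'' as you write; subadditivity gives a bound in the opposite direction. So the exponent on the interpolation curve is a genuine choice here: the paper's $q$-th power curve makes the $L^q$ term behave perfectly, your $2$-nd power curve makes the Dirichlet term elementary. Your Step 2 and the way you glue the two pieces coincide with the paper's argument. One merit of your write-up is that you take care to justify the strong $W^{1,2}_0$-continuity of $t\mapsto\sigma(t)$ and flag the delicate limit $t\to 1^-$ when $\varphi^+$ vanishes on a set of positive measure; the paper does not spell this out, although it implicitly relies on it.
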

\begin{proof}
We first connect $\varphi^+$ to the global minimizer $w$, by controlling the energy along the path. We will take advantage of a subtle convex structure, hidden in our energy functional. We take the peculiar curve
\[
\sigma(t)=\Big((1-t)\,w^q+t\,(\varphi^+)^q\Big)^\frac{1}{q},\qquad t\in[0,1].
\]
By construction, we have
\[
\int_\Omega |\sigma(t)|^q\,dx=(1-t)\,\int_\Omega |w|^q\,dx+t\,\int_\Omega |\varphi^+|^q\,dx,
\]
while by \cite[Proposition 4]{Ka} (see also \cite[Proposition 2.6]{BFK}) we know that
\[
\int_\Omega |\nabla\sigma(t)|^2\,dx\le (1-t)\,\int_\Omega |\nabla w|^2\,dx+t\,\int_\Omega |\nabla \varphi^+|^2\,dx.
\]
These entail that
\begin{equation}
\label{mozzo0}
\mathfrak{F}_{q,\alpha}(\sigma(t))\le (1-t)\,\mathfrak{F}_{q,\alpha}(w)+t\,\mathfrak{F}_{q,\alpha}(\varphi^+)\le \mathfrak{F}_{q,\alpha}(\varphi^+),
\end{equation}
where in the second inequality we used the minimality of $w$. If $\varphi\ge 0$ almost everywhere in $\Omega$, then $\varphi=\varphi^+$ and the proof is over.
\par
Otherwise, we consider the continuous path
\[
\eta^+(t)=\varphi^+-t\,\varphi^-,\qquad \mbox{ for } t\in[0,1],
\]
which connects $\varphi^+$ to $\varphi$. Let us compute the energy along this path: we have
\begin{equation}
\begin{split}
\mathfrak{F}_{q,\alpha}(\eta^+(t))&=\frac{1}{2}\,\int_\Omega |\nabla \varphi^+|^2\,dx-\frac{\alpha}{q}\,\int_\Omega |\varphi^+|^q\,dx\nonumber\\
&+\frac{t^2}{2}\,\int_\Omega |\nabla \varphi^-|^2\,dx-\frac{\alpha\,t^q}{q}\,\int_\Omega |\varphi^-|^q\,dx.\label{splitposneg}
\end{split}
\end{equation}
We claim that
\begin{equation}
\label{mozzo}
\mathfrak{F}_{q,\alpha}(\eta^+(t))\le \max\Big\{\mathfrak{F}_{q,\alpha}(\varphi^+),\, \mathfrak{F}_{q,\alpha}(\varphi)\Big\},\qquad \mbox{ for every } t\in[0,1].
\end{equation}
At this aim, observe that the function
\[
h(t):=\frac{t^2}{2}\,\int_\Omega |\nabla \varphi^-|^2\,dx-\frac{\alpha\,t^q}{q}\,\int_\Omega |\varphi^-|^q\,dx,
\]
is such that
\[
h'(t)=t\,\int_\Omega |\nabla \varphi^-|^2\,dx-\alpha\,t^{q-1}\,\int_\Omega |\varphi^-|^q\,dx.
\]
Thus $h$ is increasing for $t\in[t_0,1]$ and decreasing for $t\in[0,t_0]$, where
\[
t_0=\left(\alpha\,\frac{\displaystyle\int_\Omega |\varphi^-|^q\,dx}{\displaystyle\int_\Omega |\nabla \varphi^-|^2\,dx}\right)^\frac{1}{2-q}.
\]
We now distinguish two cases: either $0<t_0<1$ or $t_0\ge 1$.
\par
In the case $0<t_0<1$, then we have
\[
h(t)\le \max\{h(0),h(1)\}=\max\{0,\mathfrak{F}_{q,\alpha}(\varphi^-)\}.
\]
By recalling that
\[
\mathfrak{F}_{q,\alpha}(\eta^+(t))=\mathfrak{F}_{q,\alpha}(\varphi^+)+h(t),
\]
this bound in turn implies that
\[
\begin{split}
\mathfrak{F}_{q,\alpha}(\eta^+(t))&\le \mathfrak{F}_{q,\alpha}(\varphi^+)+ \max\{0,\mathfrak{F}_{q,\alpha}(\varphi^-)\}\\
&=\max\Big\{\mathfrak{F}_{q,\alpha}(\varphi^+),\, \mathfrak{F}_{q,\alpha}(\varphi)\Big\},\qquad \mbox{ for every } t\in[0,1],
\end{split}
\]
which proves \eqref{mozzo}.
\par
In the case $t_0\ge 1$, then $h$ is monotone decreasing on the interval $[0,1]$ and thus
\[
h(t)\le h(0)=0.
\]
In this case we obtain
\[
\mathfrak{F}_{q,\alpha}(\eta^+(t))\le \mathfrak{F}_{q,\alpha}(\varphi^+)=\max\Big\{\mathfrak{F}_{q,\alpha}(\varphi^+),\, \mathfrak{F}_{q,\alpha}(\varphi)\Big\},\qquad \mbox{ for every } t\in[0,1],
\]
thus proving \eqref{mozzo} in this case, as well.
\par
By gluing together the two paths $\sigma$ and $\eta^+$ and using \eqref{mozzo0} and \eqref{mozzo}, we then obtain the desired conclusion.
\end{proof}

\medskip

\end{document}